\newtheorem*{thm*}{Theorem}
\newtheorem{thm}{Theorem}[section]
\newtheorem{cor}[thm]{Corollary}
\newtheorem{lem}[thm]{Lemma}
\newtheorem{prop}[thm]{Proposition}
\newtheorem{defn}[thm]{Definition}
\newtheorem{remark}[thm]{Remark}
\newtheorem{example}[thm]{Example}
\newtheorem{mainthm}{Theorem}
\def\dimk{\operatorname{dim}_k}
\def\Spec{\operatorname{Spec}}
\def\Proj{\operatorname{Proj}}
\def\pp#1{\left(#1\right)}
\def\annn#1#2{\operatorname{ann}_{#1}\pp{#2}}
\def\tensor{\otimes}
\def\onto{\twoheadrightarrow}
\def\mc#1{\mathcal{#1}}
\def\gr{\operatorname{gr}}
\def\len{\operatorname{len}}
\def\Hilb{\mathcal{H}\hspace{-0.25ex}\mathit{ilb\/}}
\def\hook{\lrcorner}
\def\rk{\operatorname{rk}}
\def\HilbGor{\mathcal{H}\hspace{-0.25ex}\mathit{ilb\/}^{G}}%
\def\HilbGorGen{\mathcal{H}\hspace{-0.25ex}\mathit{ilb\/}^{G, gen}}%
\def\DPut#1#2{\global\csdef{D#1}{#2}#2}
\def\DDef#1#2{\global\csdef{D#1}{#2}}
\begin{document}
\def\Apolar#1{\operatorname{Apolar}\left( #1 \right)}

\setstcolor{red}

\title{Irreducibility of the Gorenstein loci of Hilbert schemes via ray
families}
\author{Gianfranco Casnati,
Joachim Jelisiejew,
Roberto Notari\thanks{The first and third authors are supported by the framework of PRIN
2010/11 ``Geometria delle variet\`a algebriche'', cofinanced by MIUR, and are members of GNSAGA of INdAM. The
second author was partially supported by the project ``Secant varieties, computational
complexity, and toric degenerations''
realised within the Homing Plus programme of Foundation for Polish Science, co-financed from European Union, Regional
Development Fund. The second author is a doctoral fellow at the Warsaw Center of
Mathematics and Computer Science financed by the Polish program KNOW.
This paper is a part of ``Computational complexity,
    generalised Waring type problems and tensor decompositions'' project
    within ``Canaletto'',  the executive program for scientific and
technological cooperation between Italy and Poland, 2013--2015. This article
is partially supported by Polish National Science Center, project
2014/13/N/ST1/02640.}}
\maketitle

\begin{abstract}
    We analyse the Gorenstein locus of the Hilbert scheme of $d$ points on $\mathbb{P}^n$ i.e.~the open
    subscheme parameterising zero-dimensional Gorenstein subschemes of
    $\mathbb{P}^n$ of degree $d$.
    We give new sufficient criteria for smoothability and smoothness of points of
    the Gorenstein locus.  In particular we
    prove that this locus is irreducible when $d\leq 13$ and find
    its components when $d = 14$.

    The proof is relatively self-contained and it does not rely on a computer
    algebra system.  As a by--product, we give equations of the
    fourth
    secant variety to the $d$-th Veronese reembedding of $\mathbb{P}^n$ for
    $d\geq 4$.
\end{abstract}

{\small\noindent\textbf{keywords:} Hilbert scheme of points, smoothability, Gorenstein
algebra, secant variety.\\
\textbf{MSC classes:} 14C05, 13H10, 14D15.}

\section{Introduction and notation}\label{sIntrNot}
Let $k$ be an algebraically closed field of characteristic neither $2$ nor $3$
and denote by $\Hilb_{p(t)}\mathbb{P}^N $ the Hilbert scheme parameterising closed subschemes in $\mathbb{P}^{N}$ with fixed Hilbert polynomial $p(t)\in{\Bbb Q}[t]$. Since A.~Grothendieck proved the existence of such a parameter space in 1966 (see \cite{Gro}), the problem of dealing with $\Hilb_{p(t)}\mathbb{P}^N $ and its subloci has been a fruitful field attracting the interest of many researchers in algebraic geometry.

Only to quickly mention some of the classical results which deserve, in our opinion, a particular attention, we recall Hartshorne's proof of the connectedness of $\Hilb_{p(t)}\mathbb{P}^N $ (see \cite{har66connectedness}), the description of the locus of codimension
$2$ arithmetically Cohen--Macaulay subschemes due to G.~Ellingsrud and
J.~Fogarty (see \cite{fogarty} for the dimension zero case and \cite{ellingsrud} for larger dimension) and of the study of the locus of codimension $3$ arithmetically Gorenstein subschemes due to J.~Kleppe and R.M.~Mir\'o--Roig (see \cite{roig_codimensionthreeGorenstein} and \cite{kleppe_roig_codimensionthreeGorenstein}).

If we restrict our attention to the case of zero--dimensional subschemes of
degree $d$, i.e. subschemes with Hilbert polynomial $p(t)=d$, then the first
significant results are due to J.~Fogarty (see \cite{fogarty}) and to A.~Iarrobino (see \cite{iarrobino_reducibility}).

In \cite{fogarty}, the author
proves that $\Hilb_{d} \mathbb{P}^{2}$ is smooth, hence irreducible thanks to Hartshorne's connectedness result (the same result holds, when one substitutes $\mathbb{P}^2$ by any smooth surface).

On the other hand in \cite{iarrobino_reducibility}, A.~Iarrobino
deals with the reducibility when $N\geq 3$ and $d$ is large with respect to $N$. In order to
better understand the result, recall that the locus of reduced schemes
${\mathcal R}\subseteq\Hilb_{d}\mathbb{P}^N $ is birational to a suitable open
subset of the $d$-th symmetric product of $\mathbb{P}^{N}$, thus it is
irreducible of dimension $dN$. We will denote by $\Hilb_{d}^{gen}\mathbb{P}^N
$ its closure in $\Hilb_{d}\mathbb{P}^N $. It is a well--known and easy fact
that $\Hilb_{d}^{gen}\mathbb{P}^N $ is an irreducible component of dimension
$dN$, by construction.
In \cite{iarrobino_reducibility}, the author proves that
$\Hilb_{d}\mathbb{P}^N $ is never irreducible when $d\gg N\ge 3$, showing that
there is a family of schemes
 of dimension greater than $dN$. Such a
 family is
thus necessarily contained in a component different from
    $\Hilb_{d}^{gen}\mathbb{P}^N$.

D.A.~Cartwright, D.~Erman, M.~Velasco,
B.~Viray proved that already for $d = 8$ and $N\geq 4$, the scheme $\Hilb_{d}\mathbb{P}^N$
is reducible (see \cite{CEVV}).

In view of these earlier works it seems reasonable to consider the
irreducibility and smoothness of open loci in $\Hilb_{d}\mathbb{P}^N $
defined by particular algebraic and geometric properties. In the present paper
we are interested in the locus $\HilbGor_{d}\mathbb{P}^N $ of points in
$\Hilb_{d}\mathbb{P}^N $ representing schemes which are Gorenstein. This is an
important locus: e.g.~it has an irreducible component
$\HilbGorGen_{d}\mathbb{P}^N :=\Hilb_{d}^{gen}\mathbb{P}^N \cap
\HilbGor_{d}\mathbb{P}^N $ of dimension $dN$ containing all the points
representing reduced schemes. Moreover it is open, but in general not dense,
inside $\Hilb_{d}\mathbb{P}^N$. Recently, interesting interactions between
$\HilbGor_{d}\mathbb{P}^N $ and the geometry of secant varieties and
general topology have been found (see
for example \cite{bubu2010}, \cite{Michalek}).

Some results about $\HilbGor_{d}\mathbb{P}^N $ are known. The
irreducibility and smoothness of $\HilbGor_{d}\mathbb{P}^N $ when $N\le 3$ is
part of the folklore (see \cite[Cor~2.6]{cn09} for more precise references). When $N\ge4$, the properties of $\HilbGor_{d}\mathbb{P}^N $ have been object of an intensive study in recent years.

E.g., it is classically known that $\HilbGor_{d}\mathbb{P}^N $ is never
irreducible for $d\ge 14$ and $N\ge6$, at least when the characteristic of $k$
is zero (see \cite{emsalem_iarrobino_small_tangent_space} and \cite{iakanev}: see also \cite{cn10}).
Also for $N = 4$ and $d\geq 140$ or $N = 5$ and $d\geq 42$ the scheme $\HilbGor_{d}\mathbb{P}^N$ is
reducible, see \cite[Section~6, p.~81]{bubu2010}. For fixed $N\in \{4, 5\}$ the
    minimal value of $d$, for which this scheme is reducible, is not known.

    As reflected by the quoted papers, it is natural to ask if
    $\HilbGor_{d}\mathbb{P}^N $ is irreducible when $d\le 13$ and $N$
    is arbitrary.
    There is some evidence of an affirmative answer to this question.
Indeed the first and third authors studied the  locus
$\HilbGor_{d}\mathbb{P}^N $ when $d\le 11$ and $N$ is arbitrary, proving its irreducibility and
dealing in detail with its singular locus in a series of papers \cite{cn09,
cn10, cn11, CN2stretched}.

A key point in the study of a zero--dimensional scheme
$X\subseteq\mathbb{P}^{N}$ is that it is abstractly isomorphic to $\Spec A$
where $A$ is an Artin $k$-algebra with $\dim_k(A)=d$. Moreover the
irreducible components of such an $X$ correspond bijectively to those direct
summands of $A$, which are local. Thus, in order to deal with
$\Hilb_{d}\mathbb{P}^N $, it suffices to deal with the irreducible schemes in
$\Hilb_{d'}\mathbb{P}^N $ for each $d'\le d$.

In all of the aforementioned papers, the methods used in the study of
$\HilbGor_{d}\mathbb{P}^N $ rely on an almost explicit classification of the
possible structure of local, Artin, Gorenstein $k$-algebras of length $d$.
Once such a classification is obtained, the authors prove that all the
corresponding irreducible schemes are smoothable, i.e.~actually lie in
$\HilbGorGen_{d}\mathbb{P}^N $. To this purpose they explicitly construct
a projective family flatly deforming the scheme they are interested
in (or, equivalently, the underlying algebra) to reducible schemes that they
know to be in $\HilbGorGen_{d}\mathbb{P}^N $ because their components have
lower degree.

Though such an approach sometimes seems to be too heavy in terms of calculations, only thanks to such a partial classification it is possible to state precise results about the singularities of $\HilbGor_{d}\mathbb{P}^N $.

However, in the papers \cite{cn10, cn11}, there
are families $H_d$ of schemes of degree $d$, where $d=10, 11$, for which an
explicit algebraic description in the above sense cannot be obtained (see
Section~3 of \cite{cn10} for the case $d=10$, Section~4 of \cite{cn11} for
$d=11$).
Nevertheless, using an alternative approach the authors are still able to
prove the irreducibility of $\HilbGor_{d}\mathbb{P}^N $ and study its
singular locus.  Indeed, using Macaulay's theory of inverse systems, the authors
check the irreducibility of the aforementioned loci $H_d$ inside $\HilbGor_{d}\mathbb{P}^N $. Then they show the existence of
a smooth point in $H_d\cap \HilbGorGen_{d}\mathbb{P}^N $. Hence, it follows
that $H_d\subseteq \HilbGorGen_{d}\mathbb{P}^N $.

The aim of the present paper is to refine and generalise this method. First, we
avoid a case by case approach by analysing large classes of algebras.
Second, in \cite{cn10, cn11} a direct check (e.g. using a computer algebra
program) is required to compute the dimension of tangent space to the Hilbert
scheme at some specific points to conclude that they are smooth. We avoid the need
of such computations by exhibiting classes of points which are smooth, making
the paper self--contained.

Using this method, we finally prove the following two statements.

\begin{mainthm}\label{ref:mainthm13degree}
If the characteristic of $k$ is neither $2$ nor $3$, then $\HilbGor_{d}\mathbb{P}^N $ is irreducible of dimension $dN$ for each $d\le 13$ and for $d=14$ and $N\le 5$.
\end{mainthm}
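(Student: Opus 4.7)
The plan is to reduce the theorem to a statement about local Artin Gorenstein $k$-algebras, stratify them via Macaulay's inverse systems, and then connect every such algebra to the smoothable component by flat one-parameter deformations (the ``ray families'' of the title), proceeding by induction on $d$. As recalled in the excerpt, a zero-dimensional Gorenstein subscheme of $\mathbb{P}^{N}$ decomposes as a disjoint union of spectra of local Artin Gorenstein algebras $A$ of length $d'\le d$, and smoothability of the whole scheme is equivalent to smoothability of each such $A$. Hence $\HilbGor_{d}\mathbb{P}^{N}=\HilbGorGen_{d}\mathbb{P}^{N}$, together with the dimension equality, follows from the statement that each such $A$ defines a point of $\HilbGorGen_{d'}\mathbb{P}^{N}$.

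Next, I would stratify local Artin Gorenstein algebras by the Hilbert function $H$ of $\gr A$, equivalently by the symmetric decomposition à la Iarrobino of a dual socle generator $f\in k[y_{1},\dots,y_{n}]$ with $\Apolar{f}\cong A$. For $d\le 13$, and for $d=14$ with embedding dimension $n\le 5$, only a short finite list of $H$'s arises; the bound $n\le 5$ is exactly what excludes the families responsible for the reducibility at $d=14$, $n\ge 6$ already known from \cite{iakanev, emsalem_iarrobino_small_tangent_space}. A direct count using apolarity shows that the stratum of algebras with fixed $H$ is irreducible, being parametrised by an open subset of the affine space of dual socle generators modulo change of coordinates, so irreducibility for each stratum is essentially free.

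The main technical step is to prove that each such stratum is contained in $\HilbGorGen_{d}\mathbb{P}^{N}$. For most Hilbert functions I would construct a ray family: a flat family $\mathcal{X}\to\mathbb{A}^{1}_{k}$ whose central fibre is $\Spec A$ and whose general fibre is a reducible Gorenstein scheme each of whose components has length strictly less than $d$, hence smoothable by induction on $d$. Flatness then transports smoothability back to $\Spec A$. The novelty relative to \cite{cn09, cn10, cn11} is that a single ray family construction should apply uniformly to an entire class of Hilbert functions at once, rather than to one algebra at a time; producing ray families in this level of generality is the main creative input.

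The hard part is the residual list of Hilbert functions for which no ray family is available. For each of these I would exhibit a particular algebra $A_{0}$ in the stratum whose smoothability is witnessed by an independent geometric reason (complete intersection, product of smaller Gorenstein algebras, or coverage by a previously constructed ray family), and then prove by abstract, computer-free arguments that the tangent space $T_{[\Spec A_{0}]}\Hilb_{d}\mathbb{P}^{N}$ has dimension exactly $dN$. A smooth point of the Hilbert scheme sits on a unique irreducible component, so $A_{0}\in\HilbGorGen_{d}\mathbb{P}^{N}$, and irreducibility of the stratum forces the entire stratum onto the smoothable component. Replacing the computer algebra computations of \cite{cn10, cn11} by such uniform tangent-space arguments, so that the proof remains self-contained, is the central technical obstacle the plan must overcome.
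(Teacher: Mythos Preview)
Your overall plan matches the paper's approach closely: reduce to local algebras, stratify by Hilbert function and symmetric decomposition, use ray degenerations to peel off points and induct on length, and for the residual strata exhibit an unobstructed smoothable point whose smoothness drags the whole stratum into the smoothable component.

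There is, however, a genuine gap in your claim that irreducibility of each Hilbert-function stratum is ``essentially free, being parametrised by an open subset of the affine space of dual socle generators.'' The set $\{f\in P_{\leq s}: H_{\Apolar{f}}=H\}$ is only locally closed, not open, and its irreducibility is not automatic; indeed a good part of the paper is devoted to establishing it in the cases that arise. The two-variable leading-form locus is handled by invoking Iarrobino's earlier analysis of the punctual Hilbert scheme. For leading forms with Hilbert function $(1,3,3,\ldots,3,1)$ or $(1,4,4,\ldots,4,1)$ the paper argues that the degree-two part of the apolar ideal is the saturated ideal of a length-$3$ (resp.\ length-$4$) subscheme of projective space; smoothability of such small schemes then yields irreducibility of the stratum of leading forms via a secant-variety deformation. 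A further step (using semicontinuity and the vanishing of $\Delta_{s-2}$ and $\Delta_{s-3}$) is needed to pass from irreducibility of the leading-form locus to irreducibility of the full stratum of $f$'s. None of this is a ``direct count.'' Without these ingredients, your final move---``irreducibility of the stratum forces the entire stratum onto the smoothable component''---has no foundation, and the argument does not close.
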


\begin{mainthm}\label{ref:mainthm14degree}
If the characteristic of $k$ is $0$ and $N\ge6$, then
$\HilbGor_{14}\mathbb{P}^N $ is connected and it has exactly two irreducible components,
which are generically smooth.
\end{mainthm}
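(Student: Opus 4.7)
My plan is to exhibit two explicit irreducible components, show that no other arises, and produce a scheme lying in both in order to obtain connectedness.  The first component is the smoothable one $\HilbGorGen_{14}\mathbb P^N$, of dimension $14N$.  The second is the closure in $\HilbGor_{14}\mathbb P^N$ of the locus
\[
  H_{1661} = \bigl\{\,[X] \;\big|\; X \text{ is supported at a single point and has local Hilbert function } (1,6,6,1)\,\bigr\},
\]
which is nonempty precisely when $N \ge 6$ and whose dimension strictly exceeds $14N$ by the classical Emsalem--Iarrobino and Iarrobino--Kanev computation (\cite{emsalem_iarrobino_small_tangent_space,iakanev}).  Thus $\overline{H_{1661}}$ is a component distinct from $\HilbGorGen_{14}\mathbb P^N$.

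The central step is the dichotomy: every local, Artinian, Gorenstein $k$-algebra $A$ of length $14$ either has Hilbert function $(1,6,6,1)$, or corresponds to a smoothable scheme.  By the symmetry $h_i = h_{s-i}$ of Gorenstein Hilbert functions together with Macaulay's growth bound, the list of Hilbert functions of length $14$ is short and explicit; apart from $(1,6,6,1)$ it consists of stretched, almost-stretched, and a handful of further low-socle-degree cases.  The stretched and almost-stretched cases are treated by the methods of \cite{cn09,cn10,cn11,CN2stretched} and extend verbatim to length $14$; the remaining strata are handled uniformly by constructing an explicit \emph{ray family} (the tool announced in the title) specialising $A$ to a Gorenstein scheme with components of length $\le 13$, which is smoothable by Theorem~A, so that flatness of the family places $[\Spec A]$ in $\HilbGorGen_{14}\mathbb P^N$.

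Given this dichotomy, $\HilbGor_{14}\mathbb P^N = \HilbGorGen_{14}\mathbb P^N \cup \overline{H_{1661}}$.  Irreducibility of $\overline{H_{1661}}$ follows from apolarity: its points are parameterised by triples $(p, \iota, [F])$ consisting of a support point $p\in\mathbb P^N$, a surjection $\iota \colon \hat{\mathcal O}_{\mathbb P^N,p}\twoheadrightarrow \Apolar{F}$, and a $\mathrm{GL}_6$-orbit $[F]$ of a cubic form in six essential variables, and each such datum varies in an irreducible space.  Generic smoothness of $\HilbGorGen_{14}\mathbb P^N$ at a reduced scheme is standard; generic smoothness of $\overline{H_{1661}}$ is obtained by exhibiting a specific cubic $F_0$ at which the tangent space to $\Hilb_{14}\mathbb P^N$ at $\Apolar{F_0}$ has dimension equal to $\dim \overline{H_{1661}}$.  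This tangent-space estimate is obtained through the ray-family framework rather than through computer algebra.

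Finally, connectedness follows by producing a single point $[X_0] \in \overline{H_{1661}} \cap \HilbGorGen_{14}\mathbb P^N$, which I would construct by degenerating a smoothable length-$14$ scheme to a local Gorenstein scheme with Hilbert function $(1,6,6,1)$ via an explicit ray family: the resulting $X_0$ lies in $H_{1661}$, and simultaneously in $\HilbGorGen_{14}\mathbb P^N$ by construction.  The main technical obstacle is the dichotomy step: treating all non-$(1,6,6,1)$ Hilbert-function strata of length $14$ uniformly, without the case-by-case computer algebra used in \cite{cn10,cn11}, is precisely the role of the ray-family construction and is where the bulk of the work resides.
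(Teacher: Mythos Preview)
Your overall architecture matches the paper's: the dichotomy ``either $(1,6,6,1)$ or smoothable'' is exactly Theorem~\ref{ref:mainthmsmoothable}, and the irreducibility of the $(1,6,6,1)$ locus is the paper's Lemma~\ref{ref:1661irreducibility:lem}, which parameterises essentially as you describe. However, two of your supporting claims are incorrect.

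First, and most seriously, you have the non-smoothability argument backwards. You write that $\overline{H_{1661}}$ has dimension \emph{strictly exceeding} $14N$. In fact the Emsalem--Iarrobino/Iarrobino--Kanev computation (see Remark~\ref{ref:1661:rmk}) shows that the tangent space at a general $(1,6,6,1)$ point of $\Hilb_{14}\mathbb{P}^6$ has dimension $76$, which is \emph{smaller} than $14\cdot 6=84$. The conclusion is that such a point cannot lie on the smoothable component (whose dimension is $14N$, so every point on it has tangent space of dimension at least $14N$). The second component therefore has dimension strictly \emph{less} than $14N$, and its separateness comes from a tangent-space obstruction, not a dimension count from below. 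Your stated reason would not prove non-smoothability.

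Second, you assert that generic smoothness of $\overline{H_{1661}}$ is obtained ``through the ray-family framework rather than through computer algebra.'' The paper does \emph{not} do this: it relies on the cited Macaulay computation of Iarrobino--Kanev giving tangent dimension $76$ at a general point (Remark~\ref{ref:1661:rmk}), and then transports the resulting smooth point from $\mathbb{P}^6$ to $\mathbb{P}^N$ via \cite[Lem~2.3]{cn09}. The ray-family/tangent-preserving machinery of Section~\ref{sec:ray} is used only for points on the \emph{smoothable} component; it is not applied to the $(1,6,6,1)$ stratum, and it is not clear how one would adapt it there.

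Finally, your connectedness plan---degenerate a smoothable length-$14$ scheme to a local $(1,6,6,1)$ scheme via a ray family---does not fit the ray-family formalism as developed in the paper: those families have the given local algebra as the \emph{special} fibre and a reducible scheme as the general fibre, not the other way around. You would need a genuinely different construction to exhibit a smoothable $(1,6,6,1)$ algebra. (The paper's written proof, for its part, does not spell out the connectedness step.)
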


Theorem~\ref{ref:mainthm13degree} has an interesting consequence regarding secant varieties of
Veronese embeddings. In \cite{geramita} Geramita conjectures that
the ideal of the $2^{nd}$ secant variety (the variety of secant lines) of the
$d^{th}$ Veronese embedding of $\mathbb{P}^{n}$ is generated by the $3\times3$ minors of
the $i^{th}$ catalecticant matrix for $2\le i\le d-2$. Such a conjecture was
confirmed in \cite{Raicu_thesis}. As pointed out in \cite[Section~8.1]{bubu2010}, the above
Theorem~\ref{ref:mainthm13degree} allows to extend the above result as follows: if $r\le 13$, $2r\le
d$ and, then for every $r \leq i\leq d-r$ the set--theoretic equations of the $r^{th}$
secant variety of the $d^{th}$ Veronese embedding of $\mathbb{P}^{n}$ are given by the
$(r+1)\times(r+1)$ minors of the $i^{th}$ catalecticant matrix.
\vspace{1em}

The proofs of Theorem~\ref{ref:mainthm13degree} and
Theorem~\ref{ref:mainthm14degree} are highly interlaced and they follow from a
long series of partial results. In order to better explain the ideas and
methods behind their proofs we will describe in the following lines the
structure of the paper.

In our analysis we incorporate several tools. In Section~\ref{sec:prelim} we
recall the classical ones, most notably Macaulay's correspondence for local,
Artinian, Gorenstein algebras and Macaulay's Growth Theorem. Moreover we also
list some criteria for checking the flatness of a family of algebras which will
be repeatedly used throughout the whole paper.

In Section~\ref{sec:dualgen} we analyse Artin Gorenstein quotients of a power series ring and
exploit the rich automorphism group of this ring to put the quotient into
suitable \emph{standard} form, deepening a result by A.~Iarrobino.

In Section~\ref{sec:specialforms} we further
analyse the quotients, especially their dual socle
generators. We also
construct several irreducible subloci of the Hilbert scheme using the theory
of secant varieties. We give a small contribution to this theory, showing that
the fourth secant variety to a Veronese reembedding of $\mathbb{P}^n$ is
defined by minors of a suitable catalecticant matrix.

Section~\ref{sec:ray} introduces a central object in our study: a class of
families, called ray families, for which we have relatively good control of
the flatness and, in special cases, fibers. Most
notably, Subsection~\ref{subsec:tangentpreserving} gives a class of
\emph{tangent preserving} flat families, which enable us to construct smooth
points on the Hilbert scheme of points without the necessity of heavy computations.

Finally, in Section~\ref{sec:proof}, we give the proofs of
Theorem~\ref{ref:mainthm13degree} and ~\ref{ref:mainthm14degree}. It is worth
mentioning that these results are rather easy consequences of the introduced
machinery. In this section we also prove the following general smoothability
result (see Thm~\ref{ref:mainthmstretchedfive:thm}), which has no restriction
on the length of the algebra and generalises the smoothability results
    from
    \cite{SallyStretchedGorenstein}, \cite{CN2stretched} and
    \cite{EliasVallaAlmostStretched}.
\begin{mainthm}
    Let $k$ be an algebraically closed field of characteristic neither $2$ nor
    $3$.
    Let $A$ be a local Artin Gorenstein $k$-algebra with maximal ideal
    $\mathfrak{m}$.

    If $\dim_k(\mathfrak{m}^2/\mathfrak{m}^3)\le 5$ and
    $\dim_k(\mathfrak{m}^3/\mathfrak{m}^4)\le 2$, then $\Spec A$ is smoothable.
\end{mainthm}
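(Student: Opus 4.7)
The plan is to reduce the problem to the case that $A$ has socle degree exactly~$3$ and then to exhibit a smoothing via the ray families built in Section~\ref{sec:ray}. Write $h_i := \dim_k(\mathfrak{m}^i/\mathfrak{m}^{i+1})$ and let $s$ be the socle degree of $A$. Macaulay's Growth Theorem, applied to the hypothesis $h_3\le 2$, forces $h_i\le 2$ for every $i\ge 3$: indeed $2=\binom{3}{3}+\binom{2}{2}$ has Macaulay bound $2^{\langle 3\rangle}=\binom{4}{4}+\binom{3}{3}=2$, and the estimate is preserved by iteration. The Gorenstein symmetry $h_i=h_{s-i}$ then yields $h_1=h_{s-1}\le 2$ as soon as $s\ge 4$. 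Artin Gorenstein algebras of embedding dimension at most~$2$ are complete intersections in two variables, and are therefore smoothable (see \cite[Cor.~2.6]{cn09}). If $s\le 2$, the Hilbert function is $(1,h_1,1)$ or shorter, and such an $A$ is apolar to a non-degenerate quadratic form and smoothable by a classical direct construction. This leaves only the case $s=3$.

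When $s=3$, Gorenstein symmetry forces $h_1=h_2\le 5$, so the Hilbert function of $A$ is $(1,n,n,1)$ with $n\le 5$ and $\dim_k A\le 12$. These algebras are compressed, and the standard form results of Section~\ref{sec:dualgen} allow us to assume that $A=\Apolar{F}$ for a \emph{homogeneous} cubic $F\in k[[\alpha_1,\dots,\alpha_n]]$ whose apolar algebra has Hilbert function $(1,n,n,1)$: the lower-degree components of the dual generator are eliminated by a careful choice of automorphism of the power series ring that fixes the degree-$3$ part. The problem therefore reduces to showing that $\Apolar{F}$ is smoothable for every such cubic.

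The smoothing is produced by combining the secant-variety viewpoint of Section~\ref{sec:specialforms}, which identifies $F$ with a point on a low-order secant variety of the third Veronese reembedding of $\mathbb{P}^{n-1}$, with the ray families of Section~\ref{sec:ray}. Concretely, one exhibits a ray family whose generic fibre is $\Apolar{F}$ and whose special fibre is a Gorenstein algebra that decomposes as a direct product of algebras of strictly smaller length, each smoothable by induction on $\dim_k A$; flatness and preservation of length are verified via the tangent-preserving criterion of Subsection~\ref{subsec:tangentpreserving}, which avoids any heavy computation. The main obstacle, as expected, is the extremal case $n=5$, $\dim_k A=12$: here $F$ lies on the fourth secant variety of the third Veronese of $\mathbb{P}^4$, and one must engineer the ray family so that the central fibre genuinely splits off a local summand of smaller length. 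It is precisely for this delicate step that the catalecticant equations for the fourth secant variety proved in Section~\ref{sec:specialforms}, together with the full ray-family machinery of Section~\ref{sec:ray}, are brought to bear.
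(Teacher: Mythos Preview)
Your reduction to socle degree $3$ is based on the claim that a local Artin Gorenstein algebra has symmetric Hilbert function, $h_i=h_{s-i}$. This is false in general: that symmetry holds only for \emph{graded} Gorenstein algebras. For local, non-graded ones, what one has is Iarrobino's symmetric decomposition (Theorem~\ref{ref:Hfdecomposition:thm}), in which each layer $\Delta_a$ is symmetric but their sum need not be. The paper itself analyses algebras with $H_A=(1,4,5,2,2,\ldots,2,1,\ldots,1)$ (Lemma~\ref{ref:14521case:lem}), and the remark after Theorem~\ref{ref:Hfdecomposition:thm} already gives the toy example $(1,n,1,1)$ with $n>1$. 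So your deduction that $h_1=h_{s-1}\le 2$ whenever $s\ge 4$, and hence that only $s\le 3$ requires work, is wrong; the bulk of the theorem lies precisely in the higher socle degrees you discard.

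The same error recurs in your treatment of $s=3$: you assert $h_1=h_2$, but one only has $h_1\ge h_2$ (decomposition $(1,e,e,1)+(0,n-e,0)$), and the dual generator cannot in general be made a homogeneous cubic---Proposition~\ref{ref:squares:prop} shows it is a cubic \emph{plus} a sum of squares in fresh variables. The paper's actual proof (Theorem~\ref{ref:mainthmstretchedfive:thm}) works by induction on length: it first peels off the $\Delta_{s-2}=(0,q,0)$ contribution via Example~\ref{ref:squareadding:example}, then lists the six possible symmetric decompositions of $(1,n,m,2,\ldots,2,1,\ldots,1)$, treats the three with $H_A(2)=H_A(1)+1$ separately via Lemma~\ref{ref:14521case:lem}, and for the remaining three uses irreducibility of the two-variable locus (Proposition~\ref{ref:irreducibleintwovariables:prop}) together with Proposition~\ref{ref:stretchedhavedegenerations:prop} to produce a degeneration whose general fibre is $B'\times k$ with $B'$ shorter. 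Your final paragraph also invokes the catalecticant equations of Corollary~\ref{ref:equationforsecant:cor} for cubics, but those results require degree $\ge 4$ and are not used anywhere in the proof of this theorem.
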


\subsection*{Notation}
\DDef{P}{P}%
\DDef{S}{S}%
\DDef{mmS}{\mathfrak{m}_{\DS}}%
\def\Dx{\alpha}%
\def\Dan#1{\annn{\DS}{#1}}%
\def\Dhdvect#1{\Delta_{#1}}%

All symbols appearing below are defined in Section~\ref{sec:prelim}.

\noindent\begin{longtable}{p{3.5cm} p{12cm}}
$k$ & an algebraically closed field of characteristic $\neq 2, 3$.\\
$\DP = k[x_1, \ldots ,x_n]$ & a polynomial ring in $n$ variables and fixed
basis.\\
$\DS = k[[\Dx_1, \ldots ,\Dx_n]]$ & a power series ring dual
(see~Subsection \ref{sss:apolarity}) to $\DP$, with a fixed (dual) basis.\\
$\DmmS$ & the maximal ideal of $\DS$.\\
$\DS_{poly} = k[\Dx_1, \ldots ,\Dx_n]$ & a polynomial subring of
$\DS$ defined by the choice of the basis.\\
$H_A$ & the Hilbert function of a local Artin algebra $A$.\\
$\Dhdvect{A, i}$, $\Dhdvect{i}$ & the $i$-th row of the symmetric decomposition of the Hilbert function of
a local Artin Gorenstein algebra $A$ as in Theorem~\ref{ref:Hfdecomposition:thm}.\\
$e(a)$ & the $a$-th ``embedding dimension'', equal to $\sum_{t=0}^a \Dhdvect{t}(1)$, as in Definition~\ref{ref:standardform:def}.\\
$\Dan{f}$ & the annihilator of $f\in \DP$ with respect to the action of $\DS$.\\
$\Apolar{f}$ & the apolar algebra of $f\in \DP$, equal to $\DS/\Dan{f}$.
\end{longtable}

\section{Preliminaries}\label{sec:prelim}

\def\DSpoly{S_{poly}}%
Let $n$ be a natural number. By $(\DS, \DmmS, k)$ we denote the power series ring
$k[[\Dx_1, \ldots ,\Dx_n]]$ of dimension $n$ with a fixed basis $\Dx_1, \ldots
,\Dx_n$. The chosen basis determines a polynomial ring $\DSpoly =
k[\Dx_1, \ldots ,\Dx_n] \subseteq \DS$. By $\DP$ we denote the polynomial ring
$k[x_1, \ldots ,x_n]$. We will later define a duality between $\DS$ and $\DP$,
see Subsection~\ref{sss:apolarity}. We usually think of $n$ being large enough, so that the
considered local Artin algebras are quotients of $\DS$.

For an element $f\in \DP$, we say that $f$
\emph{does not contain $x_i$} if $f\in k[x_1, \ldots , x_{i-1}, x_{i+1}, \ldots
,x_n]$; similarly for $\sigma\in \DS$ or $\sigma\in \DSpoly$. For $f\in \DP$,
by $f_d$ we denote the degree $d$ part of $f$, with respect to the total
degree; similarly for $\sigma\in \DS$.

By $\DP_{m}$ and $\DP_{\leq m}$ we denote the space of homogeneous polynomials of
degree $m$ and (not necessarily homogeneous) polynomials of degree at most $m$ respectively. These spaces are
naturally affine spaces over $k$, which equips them with a scheme structure.

Recall that $\DS$ has a rich automorphism group: for every choice of elements
    $\sigma_1, \ldots ,\sigma_n\in \DmmS$ linearly independent in
    $\DmmS/\DmmS^2$ there is a unique automorphism $\varphi$ of $\DS$ such that
    $\varphi(\Dx_i) = \sigma_i$ for $i=1,2, \ldots ,n$. The existence of such
    automorphisms is employed in Section~\ref{sec:specialforms} to put the
    considered Artin Gorenstein algebras in a better form. See
    e.g.~\cite[Section~2]{EliasRossi} for details and examples of this method.

\begin{remark}
    For the reader's convenience we introduce numerous examples, which
    illustrate the possible applications. In all these examples $k$ may have
    arbitrary characteristic $\neq 2, 3$ unless otherwise stated.
\end{remark}

\subsection{Artin Gorenstein schemes and algebras}

    In this section we recall the basic facts about Artin Gorenstein algebras.
    For a more thorough treatment we refer to \cite{iakanev},
    \cite{EisView}, \cite{cn09}
    and \cite{JelMSc}.

    Finite type zero-dimensional schemes correspond to Artin algebras. Every
    such algebra $A$ splits as a finite product of its localisations at maximal
    ideals, which corresponds to the fact that the support of $\Spec A$ is
    finite and totally disconnected.
    Therefore, we will focus our interest on \emph{local} Artin
    $k$-algebras. Since $k$ is algebraically closed, such algebras
    have residue field $k$.

    An important invariant of a local algebra $(A, \mathfrak{m}, k)$ is its Hilbert function $H_A$ defined by
    $H_A(l) = \dimk \mathfrak{m}^l/\mathfrak{m}^{l+1}$. Since $H_A(l) = 0$ for
    $l \gg 0$ it is usual to write $H_A$ as the vector of its non-zero values.
    The \emph{socle degree} of $A$ is the largest $l$ such that $H_A(l) \neq
    0$.
    Such an algebra is
    \emph{Gorenstein} if the annihilator of $\mathfrak{m}$ is a
    one-dimensional vector space over $k$, see \cite[Chap~21]{EisView}.

    We
    recall for reader's benefit that a finite not necessarily local algebra $A$ is Gorenstein if and
    only if all its localisations at maximal ideals are Gorenstein (in
    particular it is meaningful to discuss the irreducibility of the
    Gorenstein locus in the Hilbert scheme by reducing to the study of
    deformations of local Gorenstein algebras: see
    Section~\ref{sss:smoothability}).

    Since $k$ is algebraically closed, we may write
    each Artin local algebra $(A, \mathfrak{m}, k)$ as a quotient of the power series ring $\DS =
    k[[\Dx_1, \ldots ,\Dx_n]]$ when $n$ is large enough, in fact $n\geq
    H_A(1)$ is sufficient. Since $\dimk A$ is
    finite, such a presentation gives a presentation $A = \DSpoly/I$,
    i.e.~a point $[\Spec A]$ of the Hilbert scheme of $\mathbb{A}^n = \Spec
    \DSpoly$.

\subsection{Contraction map and apolar algebras}\label{sss:apolarity}

\DDef{aa}{\mathbf{a}}
\DDef{bb}{\mathbf{b}}
In this section we introduce the contraction mapping, which is closely related
to Macaulay's inverse systems. We refer to \cite{ia94} and
\cite[Chap~21]{EisView} for details and proofs.

Recall that $\DP = k[x_1, \ldots ,x_n]$ is a polynomial ring and $\DS =
k[[\Dx_1, \ldots ,\Dx_n]]$ is a power series ring. The $k$-algebra $\DS$ acts
on $\DP$ by \emph{contraction} (see \cite[Def~1.1]{iakanev}). This action is denoted by
$(\cdot)\hook(\cdot):\DS \times \DP \to \DP$ and defined as follows.
Let
$\mathbf{x}^{\Daa} = x_1^{a_1} \ldots x_{n}^{a_n}\in \DP$ and $\mathbf{\Dx}^{\Dbb} = \Dx_1^{b_1} \ldots \Dx_{n}^{b_n}\in
\DS$ be monomials. We write $\Daa \geq \Dbb$ if and only if $a_i\geq b_i$ for all
$1\leq i\leq n$. Then
\[
    \mathbf{\Dx}^{\Dbb}\hook \mathbf{x}^{\Daa} :=
    \begin{cases}
        \mathbf{x}^{\Daa - \Dbb} & \mbox{if}\quad
        \Daa\geq\Dbb\\
        0 & \mbox{otherwise.}
\end{cases}\]
This action extends to $\DS\times \DP \to \DP$ by $k$-linearity on $\DP$ and
countable $k$-linearity on $\DS$.

The contraction action induces a perfect pairing between
$\DS/\DmmS^{s+1}$ and $\DP_{\leq s}$, which restricts to a perfect pairing between the degree $s$
polynomials in $\DSpoly$ and $\DP$. These pairings are compatible for different
choices of $s$.

If $f\in \DP$ then a \emph{derivative} of $f$ is an element of the
$\DS$-module $\DS f$, i.e.~an
element of the form $\partial\hook f$ for $\partial\in \DS$. By definition, these elements form an
$\DS$-submodule of $\DP$, in particular a $k$-linear subspace.

Let $A = \DS/I$ be an Artin quotient of $\DS$, then $A$ is local. The contraction
action associates to $A$ an $\DS$-submodule $M \subseteq \DP$ consisting of elements
annihilated by $I$, so that $A$ and $M$ are dual. If $A$ is Gorenstein, then the $\DS$-module
$M$ is cyclic, generated by a polynomial $f$ of degree $s$ equal to the socle
degree of $A$.
We call every such $f$ a \emph{dual socle generator of the Artin
Gorenstein algebra $A$}.
Unlike $M$, the polynomial $f$ is \emph{not determined uniquely
    by the choice of the presentation $A = S/I$}, however if $f$ and $g$ are two dual socle generators,
then $g = \partial\hook f$, where $\partial\in \DS$ is invertible.

Conversely, let $f\in \DP$ be a polynomial of degree $s$. We can associate
it the ideal $I := \Dan{f}$ such that $A := \DS/I$ is
a local Artin Gorenstein algebra of socle degree $s$. We call $I$ the
\emph{apolar ideal} of $f$ and $A$ the
\emph{apolar algebra} of $f$, which we denote as
\[A = \Apolar{f}.\] From the discussion above it follows that every
local Artin Gorenstein algebra is an apolar algebra of some polynomial.

\begin{remark}\label{ref:dualautomorphisms:remark}
    Recall that we may think of $\DS/\DmmS^{s+1}$ as the linear space dual
    to $\DP_{\leq s}$. An automorphism $\psi$ of $\DS$ or
    $\DS/\DmmS^{s+1}$ induces an automorphism
    $\DPut{psistar}{\psi^*}$ of the
    $k$-linear space $\DP_{\leq s}$.
    If $f\in \DP_{\leq s}$ and $I$ is
    the apolar ideal of $f$, then $\psi(I)$ is the apolar ideal of
    $\psi^*(f)$. Moreover, $f$ and $\psi^*(f)$ have the same degree.
\end{remark}

\subsection{Iarrobino's symmetric decomposition of Hilbert function}

\def\Dhd#1#2{\Delta_{#1}\pp{#2}}
\def\iatf#1#2{(\DS f)_{#1}^{#2}}
\def\Ddegf{s}

One of the most important invariants possessed by a local Artin Gorenstein algebra is
the symmetric decomposition of its Hilbert function, due to Iarrobino~\cite{ia94}.
To state the theorem it is convenient to define addition of vectors of
different lengths position-wise: if $a = (a_0, \ldots ,a_n)$ and $b = (b_0,
\ldots ,b_m)$ are vectors, then $a+ b= (a_0 + b_0,  \ldots , a_{\max(m, n)} +
b_{\max(m, n)})$, where $a_i = 0$ for $i >n $ and $b_i = 0$ for $i > m$.
In the following, all vectors are indexed starting from zero.

{\def\Dmm{\mathfrak{m}}%
\def\Dmmperp#1{(0 : \Dmm^{#1})}%
Let $(A, \Dmm, k)$ be a local Artin Gorenstein algebra. By
    $\Dmmperp{l}$ we
denote the annihilator of $\Dmm^{l}$ in $A$. The chain $0 = \Dmmperp{0} \subseteq
\Dmmperp{1} \subseteq  \ldots $ defines a filtration on $A$. In
general, it is different from
the usual filtration $0 = \Dmm^{s+1} \subseteq \Dmm^s \subseteq \Dmm^{s-1} \subseteq
\ldots $. The analysis of mutual position of these filtrations is the content
of Theorem~\ref{ref:Hfdecomposition:thm} below.}

\begin{thm}[Iarrobino's symmetric decomposition of Hilbert function]\label{ref:Hfdecomposition:thm}
    \def\Dmm{\mathfrak{m}}%
    \def\Dmmperp#1{(0: \mathfrak{m}^{#1})}%
    Let $(A, \Dmm, k)$ be a local Artin Gorenstein algebra of socle degree $s$ and Hilbert
    function $H_A$.
    Let
    \[
        \Dhdvect{i}(t) := \dimk \frac{\Dmmperp{s+1-i-t}\cap
        \Dmm^{t}}{\Dmmperp{s-i-t}\cap \Dmm^{t} + \Dmmperp{s+1-i-t} \cap
        \Dmm^{t+1}}\quad \mbox{for}\quad t=0,1, \ldots , s-i.
    \]
    The vectors $\Dhdvect{0}, \Dhdvect{1},
    \ldots ,\Dhdvect{s}$ have the following properties:
    \begin{enumerate}
        \item the vector $\Dhdvect{i}$ has length $s + 1 - i$ and satisfies
            $\Dhdvect{i}(t) = \Dhdvect{i}(s - i - t)$ for all
            integers $t\in [0,
            s-i]$.
        \item the Hilbert function $H_A$ is equal to the sum
            $\sum_{i=0}^{s} \Dhdvect{i}$.
        \item the vector $\Dhdvect{0}$ is equal to the Hilbert function of a
            local Artin Gorenstein \emph{graded} algebra of socle degree $s$.
    \end{enumerate}
\end{thm}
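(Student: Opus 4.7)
The plan is to follow the classical strategy: build an ascending exhaustive filtration $0 = C(-1) \subseteq C(0) \subseteq \cdots \subseteq C(s) = \gr(A)$ of the associated graded algebra $\gr(A) = \bigoplus_n \mathfrak{m}^n/\mathfrak{m}^{n+1}$ by homogeneous ideals, defined through the interaction of the $\mathfrak{m}$-adic filtration with the socle filtration $(0 :_A \mathfrak{m}^j)$. One then defines $\Dhdvect{a}$ to be the Hilbert function of $Q(a) := C(a)/C(a-1)$. With the right setup, property (2) is immediate from exhaustiveness, and the length bound $\Dhdvect{a}(n) = 0$ for $n > s-a$ in (1) follows from the defining socle-filtration condition: elements of $C(a)$ are represented by elements annihilated by $\mathfrak{m}^{s+1-a}$, which forces their $\mathfrak{m}$-adic degree to be at most $s-a$.

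For (3) I would use the apolar description from Subsection~\ref{sss:apolarity}. Writing $A = \Apolar{f}$ with $\deg f = s$, decompose $f = f_s + f_{s-1} + \cdots + f_0$; since $\deg f = s$, we have $f_s \neq 0$. The key identification to make is that $C(0)$, inheriting a graded algebra structure from $\gr(A)$, is isomorphic to $\Apolar{f_s}$. Since $f_s$ is a nonzero homogeneous form of degree $s$, $\Apolar{f_s}$ is a local Artin Gorenstein \emph{graded} algebra of socle degree $s$, and its Hilbert function equals $\Dhdvect{0}$ by construction.

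The hard part, and the main obstacle, is the symmetry $\Dhdvect{a}(n) = \Dhdvect{a}(s-a-n)$ in (1). The input is the Gorenstein duality: any $\phi \in \Hom{A}{k}$ whose restriction to the socle $(0:\mathfrak{m})$ is nonzero induces a non-degenerate symmetric bilinear form $(x,y) \mapsto \phi(xy)$ on $A$. What must be shown is that this pairing descends to a perfect pairing $Q(a)_n \times Q(a)_{s-a-n} \to k$; equivalently, that the filtration $\{C(a)\}$ is self-dual under the Gorenstein pairing after the appropriate degree shifts on each quotient. Carefully matching this socle-defined filtration with its Matlis dual, and verifying that the induced pairings on the successive quotients are non-degenerate rather than merely nonzero, is where the real work lies; once this bookkeeping is done, the symmetry and the remaining statements follow formally.
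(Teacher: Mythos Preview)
The paper does not prove this theorem at all: it is stated as a known result and attributed to Iarrobino~\cite{ia94}, with no proof supplied. So there is no ``paper's own proof'' to compare against; your proposal is a sketch of an actual proof of a result the paper simply quotes.

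That said, your sketch is essentially the strategy Iarrobino uses in~\cite{ia94}. The filtration you describe is Iarrobino's $Q$-filtration: one sets $C(a)_n$ to be the image in $\mathfrak{m}^n/\mathfrak{m}^{n+1}$ of $\mathfrak{m}^n \cap (0:\mathfrak{m}^{s+1-a-n})$, defines $Q(a) = C(a)/C(a-1)$, and $\Delta_a$ is its Hilbert function. Your identification of $Q(0)$ with $\Apolar{f_s}$ is exactly what the paper records just below the theorem (``$\Delta_0$ is the Hilbert function of $\Apolar{f_s}$''). Your description of the symmetry as coming from a perfect pairing $Q(a)_n \times Q(a)_{s-a-n} \to k$ induced by the Gorenstein form is also the standard argument; in Iarrobino's treatment this is organised via the \emph{reflexive} property of the $Q$-filtration with respect to the Macaulay/Matlis dual, and the non-degeneracy check you flag as ``the real work'' is precisely the content of \cite[Thm~1.5]{ia94}. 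So your plan is correct and matches the source the paper cites; to turn it into a complete proof you would need to carry out that non-degeneracy verification, which is a few pages of careful bookkeeping in~\cite{ia94} rather than a one-line observation.
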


Let $(A, \mathfrak{m}, k)$ be a local Artin Gorenstein algebra.
There are a few important remarks to do.
\begin{enumerate}
    \item Since $\Dhdvect{0}$ is the Hilbert function of an algebra, we have $\Dhdvect{0}(0) =
        1 = H_A(0)$. Thus for every $i > 0$ we have $\Dhdvect{i}(0) = 0$. From
        symmetry it follows that $\Dhdvect{i}(s+1-i) = 0$. In particular
        $\Dhdvect{s} = (0)$ and $\Dhdvect{s-1} = (0, 0)$, so we may ignore
        these vectors. On the other hand $\Dhdvect{s-2} = (0, q, 0)$ is in
        general non-zero and its importance is illustrated by
        Proposition~\ref{ref:squares:prop}.
    \item Suppose that $H_A = (1, n, 1, 1)$ for some $n > 0$. Then we have
        $\Dhdvect{0} = (1, *, *, 1)$ and $\Dhdvect{1} = (0, *, 0)$, thus
        $\Dhdvect{0} = (1, *, 1, 1)$, so that $\Dhdvect{0} = (1, 1, 1, 1)$
        because of its symmetry. Then $\Dhdvect{1} = (0, n -1, 0)$. Similarly,
        if $H_A = (1, n, e, 1)$ is the Hilbert
        function of a local Artin Gorenstein algebra, then $n\geq e$. This is
        a basic example on how Theorem~\ref{ref:Hfdecomposition:thm} imposes
        restrictions on the Hilbert function of $A$.
    \item If $A$ is graded, then $\Dhdvect{0} = H_A$ and all other
        $\Dhdvect{\bullet}$ are zero vectors, see \cite[Prop~1.7]{ia94}.
    \item For every $a\leq s$ the partial sum $\sum_{i=0}^a \Dhdvect{i}$ is the Hilbert
        function of a local Artin graded algebra, see \cite[Def~1.3,
        Thm~1.5]{ia94}, see also \cite[Subsection~1.F]{ia94}. In particular it satisfies Macaulay's Growth
        Theorem, see Subsection~\ref{ref:MacGrowth:sss}. Thus e.g. there is no
        local Artin Gorenstein algebra with Hilbert function decomposition
        satisfying $\Dhdvect{0} = (1, 1, 1, 1, 1, 1)$ and $\Dhdvect{1} = (0, 0, 1, 0, 0)$,
        because then $(\Dhdvect{0} + \Dhdvect{1})(1) = 1$ and
        $(\Dhdvect{0} + \Dhdvect{1})(2) = 2$.
\end{enumerate}

Let us now analyse the case when $A = \Apolar{f} = \DS/\Dan{f}$ is the apolar algebra of a
polynomial $f\in \DP$, where $f =
\sum_{i=0}^s f_i$ for some $f_i\in \DP_i$. Each local Artin Gorenstein algebra
is isomorphic to such algebra, see~Subsection~\ref{sss:apolarity}. For the
proofs of the following remarks, see~\cite{ia94}.

\begin{enumerate}
    \item Vector $\Dhdvect{0}$ is equal to the Hilbert function of $\Apolar{f_s}$, the apolar algebra of the leading form of~$f$.
    \item If $A$ is graded, then $\Dan{f} = \Dan{f_s}$, so that we may always
        assume that $f = f_s$. Moreover, in this case $H_A(m)$ is equal to
        $(\DS f_s)_m$, the number of degree $m$ derivatives of $f_s$.
    \item Let $f_1$, $f_2$ be polynomials of degree $s$ such that $f_1 - f_2$ is a
        polynomial of degree $d < s$. Let $A_i = \Apolar{f_i}$ and let
        $\Dhdvect{A_i, m}$ be the symmetric decomposition of the Hilbert
        function $H_{A_i}$ of $A_i$ for $i=1,2$. Then $\Dhdvect{A_1, m} =
        \Dhdvect{A_2, m}$ for all $m < s - d$, see
        \cite[Lem~1.10]{ia94}.
\end{enumerate}

\subsection{Smoothability and unobstructedness}\label{sss:smoothability}

An Artin algebra $A$ is called \emph{smoothable} if it is a (finite flat) limit of
smooth algebras, i.e.~if there exists a finite flat family over an irreducible
base with a special fiber isomorphic to $\Spec A$ and general fiber smooth.
Recall that $A  \simeq A_{\mathfrak{m}_1} \times  \ldots
A_{\mathfrak{m}_r}$, where $\mathfrak{m}_i$ are maximal ideals of $A$. The algebra
$A$ is smoothable if all localisations $A_{\mathfrak{m}}$ at its maximal ideals
are smoothable. The converse also holds, i.e.~if
    an algebra $A \simeq B_1 \times B_2$ is smoothable, then the algebras
    $B_1$ and $B_2$ are also smoothable, a complete and characteristic free proof of this fact will appear
    shortly in \cite{jabu_jelisiejew_smoothability}.
    We say that a zero-dimensional scheme $Z = \Spec A$
    is \emph{smoothable} if the algebra $A$ is
smoothable.

It is crucial that every local Artin Gorenstein algebra $A$ with $H_{A}(1)
\leq 3$ is smoothable, see~\cite[Prop~2.5]{cn09}, which follows from the
Buchsbaum-Eisenbud classification of resolutions,
see~\cite{BuchsbaumEisenbudCodimThree}.
Also complete intersections are smoothable.
    A complete intersection $Z \subseteq \mathbb{P}^n$ is smoothable by
Bertini's Theorem (see \cite[Example 29.0.1]{HarDeform}, but note that
Hartshorne uses a slightly weaker definition of smoothability, without
finiteness assumption). If $Z = \Spec A$ is a complete
intersection in $\mathbb{A}^n$, then $Z$ is a union of connected components
of a complete intersection $Z' = \Spec B$ in $\mathbb{P}^n$, so that $B
\simeq A \times C$ for some algebra $C$. The algebra
$B$ is smoothable since $Z'$ is. Thus also the algebra $A$ is
smoothable, i.e. $Z$ is smoothable.

\def\Hilb#1#2{\mathcal{H}ilb_{#2}(\mathbb{P}^{#1})}
\begin{defn}\label{ref:nonobstructed:def}
    A smoothable Artin algebra $A$ of length $d$, corresponding to $\Spec A
    \subseteq \mathbb{P}^n$, is \emph{unobstructed} if
    the tangent space to $\Hilb{n}{d}$ at the $k$-point $[\Spec A]=:p$ has dimension $nd$. If $A$ is
    unobstructed, then $p$ is a smooth point of the Hilbert scheme.
\end{defn}

The unobstructedness is independent of $n$ and the chosen embedding of $\Spec A$ into
$\mathbb{P}^n$, see discussion before \cite[Lem 2.3]{cn09}.
The argument above shows that algebras corresponding to complete intersections in $\mathbb{A}^n$ and
$\mathbb{P}^n$ are unobstructed. Every local Artin Gorenstein algebra $A$
with $H_A(1)\leq 3$ is unobstructed, see~\cite[Prop~2.5]{cn09}. Moreover,
every local Artin Gorenstein algebra $A$ with $H_A(1) \leq 2$ is a complete
intersection in $\mathbb{A}^2$ by the Hilbert-Burch theorem.

\begin{defn}\label{ref:limitreducible:def}
    An Artin algebra $A$ is \emph{limit-reducible} if there exists
    a flat family (over an irreducible base) whose special fiber is $A$ and general fiber is reducible.
    An Artin algebra $A$ is \emph{strongly non-smoothable} if it is not
    limit-reducible.
\end{defn}
Clearly, strongly non-smoothable algebras (other than $A = k$) are non-smoothable.
The definition of strong non-smoothability is useful, because to
show that there is no non-smoothable algebra of length less than $d$ it is
enough to show that there is no strongly non-smoothable algebra of
length less than $d$.

\subsection{Macaulay's Growth Theorem}\label{ref:MacGrowth:sss}

We will recall
Macaulay's Growth Theorem and Gotzmann's Persistence Theorem, which provide strong restrictions on the possible
Hilbert functions of graded algebras.
Fix $n\geq 1$. Let $m$ be any natural number, then $m$ may be uniquely written
in the form
\[m =\binom{m_n}{n} + \binom{m_{n-1}}{n-1} +  \ldots + \binom{m_1}{1},\] where
$m_n > m_{n-1} >  \ldots  > m_1$. We define
\[
    m^{\langle i\rangle} := \binom{m_n+1}{n+1} + \binom{m_{n-1}+1}{n} +  \ldots
    + \binom{m_1+1}{2}.
\]

It is useful to compute some initial values of the above defined function, i.e.
$1^{\langle n\rangle} = 1$ for all $n$, $3^{\langle 2 \rangle} = 4$,
$4^{\langle 2\rangle} = 5$, $6^{\langle 2\rangle} = 10$ or $4^{\langle
3\rangle} = 5$.

\begin{thm}[Macaulay's Growth Theorem]\label{ref:MacaulayGrowth:thm}
    If $A$ is a graded quotient of a polynomial ring over $k$, then the
    Hilbert function $H_A$ of $A$ satisfies $H_A(m+1) \leq H_{A}(m)^{\langle
    m\rangle}$ for all $m$.
\end{thm}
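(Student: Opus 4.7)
The plan is to prove Macaulay's Growth Theorem by the classical two-step reduction: first reduce to monomial ideals, then to lex-segment ideals, and finally verify the bound by a direct combinatorial computation on the $m$-binomial expansion.

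First I would reduce to the monomial case. Write $A = R/I$ with $R = k[x_1,\ldots,x_n]$ and $I$ a homogeneous ideal. Fixing any monomial term order (for instance degree-lex) and passing to the initial ideal $\mathrm{in}(I)$, standard Gröbner basis theory gives that $R/I$ and $R/\mathrm{in}(I)$ have the same Hilbert function in every degree. Thus it suffices to treat the case where $I$ is a monomial ideal.

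Second I would invoke Macaulay's structural theorem on lex-segments: for each monomial ideal $I$, there exists a unique \emph{lex-segment} ideal $L \subseteq R$, i.e.~an ideal whose degree-$d$ piece is spanned by an initial segment of monomials in the lexicographic order for every $d$, with $H_{R/L} = H_{R/I}$. The proof of this statement is itself nontrivial and proceeds by an iterated compression argument: one shows that for any monomial ideal, replacing its degree-$d$ component by the lex-initial segment of the same dimension (while keeping what is forced in higher degrees) produces another ideal with the same Hilbert function; repeating this in each degree yields the desired lex-segment ideal. This reduces the theorem to the case where $I = L$ is a lex-segment.

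For a lex-segment $L$, write $H_A(m) = m$ in the $m$-binomial expansion $m = \binom{m_n}{n} + \binom{m_{n-1}}{n-1} + \cdots + \binom{m_1}{1}$ with $m_n > m_{n-1} > \cdots > m_1 \geq 1$. In degree $m$ the monomials \emph{outside} $L$ form, by definition, the lex-initial segment of size $m$ in the set of degree-$m$ monomials; this segment admits a clean combinatorial description, grouping monomials by their prefix of variables and matching the binomials $\binom{m_i}{i}$ to sub-segments. One then counts monomials of degree $m+1$ outside $L$: each such monomial is obtained from a degree-$m$ monomial outside $L$ by multiplication by a variable no smaller than its smallest variable, and the count precisely gives
\[
\binom{m_n+1}{n+1} + \binom{m_{n-1}+1}{n} + \cdots + \binom{m_1+1}{2} = m^{\langle m\rangle}.
\]
Since $H_A(m+1) \leq \dim_k R_{m+1} - \dim_k L_{m+1}$ and $L$ is at least as large in degree $m+1$ as any ideal-closure of the lex-segment in degree $m$, the bound $H_A(m+1) \leq m^{\langle m \rangle}$ follows.

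The main obstacle is the combinatorial heart in the third step, namely the bookkeeping showing that the lex-initial segment of size $m$ propagates to a segment of size exactly $m^{\langle m\rangle}$ in the next degree; this requires a careful induction on $n$ or on the terms of the $m$-binomial expansion. The lex-segment reduction in step two is also delicate but is now treated as a black box in most modern references (see e.g.~Bruns--Herzog).
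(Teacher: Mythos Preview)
Your outline is the classical route (initial ideal, then lex-segment, then the combinatorial count via the Macaulay representation), which is exactly the proof in Bruns--Herzog that the paper cites; the paper itself gives no argument beyond the reference \cite[Thm~4.2.10]{BrunsHerzog}. Two small cosmetic points: your notation ``$H_A(m)=m$'' reuses the symbol $m$ for both the degree and the value, which is confusing, and the final inequality paragraph is slightly garbled---the clean statement is that for a lex-segment $L_m\subseteq R_m$ one has $\dim_k(R_1\cdot L_m)\geq \dim_k(R_1\cdot I_m)$ for any monomial space $I_m$ of the same dimension, whence $H_{R/I}(m+1)\leq \dim_k R_{m+1}-\dim_k(R_1 L_m)=H_A(m)^{\langle m\rangle}$---but the substance is correct.
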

\begin{proof}
    See \cite[Thm 4.2.10]{BrunsHerzog}.
\end{proof}

Note that the assumptions of Theorem~\ref{ref:MacaulayGrowth:thm} are satisfied
for every local Artin $k$-algebra $(A, \mathfrak{m}, k)$, since its Hilbert
function is by definition equal to the
Hilbert function of the associated graded algebra.

\begin{remark}\label{ref:MacaulaysBoundedGrowth:rmk}
    We will frequently use the following easy consequence of Theorem~\ref{ref:MacaulayGrowth:thm}.

    Let $A$ be a graded quotient of a polynomial ring over $k$.
    Suppose that  $H_A(l) \leq l$ for some $l$.
    Then $H_A(l) = \binom{l}{l} + \binom{l-1}{l-1} +  \ldots $ and
    $H_A(l)^{\langle l \rangle} = \binom{l+1}{l+1} + \binom{l}{l} +  \ldots  =
    H_A(l)$, thus $H_A(l+1) \leq H_A(l)$. It follows
        that the Hilbert function of $H_A$ satisfies $H_A(l) \geq H_A(l+1)\geq
        H_A(l+2) \geq  \ldots $. In particular $H_A(m) \leq l$ for all $m \geq
        l$.
\end{remark}

\begin{thm}[Gotzmann's persistence Theorem]\label{ref:Gotzmann:thm}
    Let $A = \DSpoly/I$ be a graded quotient of a polynomial ring $\DSpoly$ over $k$ and
    suppose that for some $l$ we have $H_A(l+1) = H_{A}(l)^{\langle
    l\rangle}$ and $I$ is generated by elements of degree at most $l$. Then
    $H_A(m+1) = H_{A}(m)^{\langle m\rangle}$ for all $m\geq l$.
\end{thm}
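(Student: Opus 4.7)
The plan is to exploit the \emph{extremal} nature of the hypothesis $H_A(l+1) = H_A(l)^{\langle l\rangle}$ and propagate it inductively to all higher degrees, using the fact that $I$ being generated in degrees $\le l$ reduces everything to a statement about the single vector space $V := I_l \subseteq S_l$, where $S = S_{poly}$.

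First I would observe that the generation hypothesis implies $I_m = S_{m-l}\cdot V$ for every $m \ge l$. Indeed, if $g$ is a generator of $I$ of degree $k \le l$, then in degree $m$ it contributes $S_{m-k}\cdot g = S_{m-l}\cdot(S_{l-k}\cdot g) \subseteq S_{m-l}\cdot V$. Thus $H_A(m)$ for $m \ge l$ depends only on $V$, and it is enough to establish the single inductive step: if a subspace $W \subseteq S_j$ (for $j \ge l$) satisfies $\dim(S_1\cdot W) = \bigl(\dim_k S_j - \dim_k W\bigr)^{\langle j\rangle}$ viewed as the extremal case of Macaulay's bound for the quotient, then the same extremal equality holds for $S_1\cdot W \subseteq S_{j+1}$. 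Applying this to $W = S_{j-l}\cdot V$ for $j = l, l+1, \ldots$ yields the theorem.

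Second, I would establish this persistence step by comparison with lex-segment ideals. Among all subspaces of $S_j$ of a fixed codimension $d$, the lex-segment $L \subseteq S_j$ minimises $\dim(S_1\cdot L)$, and by direct calculation with the Macaulay representation $d = \binom{m_n}{n} + \binom{m_{n-1}}{n-1} + \ldots$, one checks that the lex-segment automatically satisfies persistence: the minimum $(d)^{\langle j\rangle}$ in degree $j+1$ begets the minimum $\bigl((d)^{\langle j\rangle}\bigr)^{\langle j+1\rangle}$ in degree $j+2$, because the shifted representation retains its shape. The substantive content is that an arbitrary $W$ attaining Macaulay's bound must behave like $L$ under multiplication by $S_1$. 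A clean route to this is \emph{Green's hyperplane restriction theorem}: for a generic linear form $\ell$, the Hilbert function of $S/(I,\ell)$ is bounded term-by-term by the lex analogue, and attaining Macaulay's bound forces these inequalities to be equalities simultaneously in degrees $j$ and $j+1$, which inductively in the number of variables $n$ pins down the growth of $S_1\cdot W$.

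The main obstacle is precisely this structural step: converting the numerical equality $\dim(S_1\cdot W) = (\dim W)^{\langle j\rangle}$ into enough structural information on $W$ to guarantee the same equality one degree up. The slickest treatment (and the one I would present in detail) reduces to the monomial case by replacing $I$ with its generic initial ideal $\gin(I)$, which by Bayer--Stillman preserves the Hilbert function and the degrees of generators, and is Borel-fixed; for Borel-fixed (monomial) ideals the persistence is then a direct combinatorial computation on Macaulay representations. Once persistence is established for one step, the full statement follows by a straightforward induction on $m \ge l$.
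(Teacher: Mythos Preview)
The paper does not prove this statement; it simply cites \cite[Thm~4.3.3]{BrunsHerzog}. So any correct argument you give is already ``more'' than what the paper supplies, and the comparison is with the standard literature proof rather than with anything in the paper.

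Your reduction to the single vector space $V = I_l$ is correct and standard. Your outline via Green's hyperplane restriction theorem is a legitimate route and, if carried out, would yield a complete proof.

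However, the approach you say you would actually present in detail --- passing to $\gin(I)$ --- has a genuine gap. It is \emph{not} true that Bayer--Stillman guarantees $\gin(I)$ is generated in the same degrees as $I$. What Bayer--Stillman gives (for the reverse-lex order in generic coordinates) is that $\operatorname{reg}\gin(I) = \operatorname{reg} I$, which is a statement about the entire resolution, not about generator degrees alone. In general $\gin$ can acquire new minimal generators in higher degrees: already for $I = (x^2, y^2) \subseteq k[x,y]$ one finds $\gin(I) = (x^2, xy, y^3)$, which has a cubic generator. So from the hypotheses of the theorem you cannot conclude that $\gin(I)$ is generated in degrees $\le l$, and without this the Borel-fixed combinatorics does not apply directly.

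The fix closest in spirit to what you wrote is to pass not to $\gin(I)$ but to the lex-segment ideal $I^{\mathrm{lex}}$ with the same Hilbert function. By the Bigatti--Hulett--Pardue theorem, $I^{\mathrm{lex}}$ has graded Betti numbers at least those of $I$, hence in particular is also generated in degrees $\le l$; persistence for a lex ideal is then the straightforward combinatorial check you describe. This is essentially the proof given in Bruns--Herzog. Alternatively, flesh out the Green hyperplane-restriction argument you sketched, which avoids Bigatti--Hulett--Pardue entirely.
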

\begin{proof}
    See \cite[Thm 4.3.3]{BrunsHerzog}.
\end{proof}

In the following we will mostly use the following consequence of
Theorem~\ref{ref:Gotzmann:thm}, for which we introduce some (non-standard) notation. Let
$I \subseteq \DSpoly = k[\Dx_1, \ldots ,\Dx_n]$ be a graded ideal in a polynomial ring and
$m\geq 0$. We say that $I$ is \emph{$m$-saturated} if for all $l\leq m$ and
$\sigma\in (\DSpoly)_l$ the condition  $\sigma\cdot (\Dx_1, \ldots ,\Dx_n)^{m - l}\subseteq I$
implies $\sigma\in I$.

\begin{lem}\label{ref:P1gotzmann:lem}
    \def\Dnn{\mathfrak{n}}%
    Let $\DSpoly = k[\Dx_1, \ldots ,\Dx_n]$ be a polynomial ring with
    maximal ideal $\Dnn = (\Dx_1,
    \ldots ,\Dx_n)$. Let $I \subseteq \DSpoly$ be a graded ideal and $A =
    \DSpoly/I$. Suppose that $I$ is $m$-saturated for some
    $m\geq 2$.
    Then
    \begin{enumerate}
        \item if $H_A(m) = m+1$ and $H_A(m+1) = m+2$, then $H_A(l) = l+1$ for
            all $l\leq m$, in particular $H_A(1) = 2$.
        \item if $H_A(m) = m+2$ and $H_A(m+1) = m+3$, then $H_A(l) = l+2$ for
            all $l\leq m$, in particular $H_A(1) = 3$.
    \end{enumerate}
\end{lem}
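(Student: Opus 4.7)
The plan is to combine Gotzmann's Persistence Theorem, which controls the Hilbert function going upward from degree~$m$, with the $m$-saturation hypothesis, which determines the graded pieces of $I$ in degrees $\leq m$ from the single piece $I_m$. I treat both cases in parallel, writing $c\in\{1,2\}$.

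First, a direct computation in the Macaulay representation gives $(m+c)^{\langle m\rangle}=m+c+1$, so the hypothesis $H_A(m+1)=m+c+1$ attains the Macaulay bound at degree $m$. Setting $J:=(I_{\leq m})$, which agrees with $I$ in degrees $\leq m$ and is generated in those degrees, Theorem~\ref{ref:Gotzmann:thm} propagates this equality upward to give $H_{\DSpoly/J}(l)=l+c$ for all $l\geq m$, so the Hilbert polynomial of $\DSpoly/J$ equals $t+c$. The saturation $J^{sat}$ then cuts out in $\mathbb{P}^{n-1}$ a one-dimensional scheme of degree one and Euler characteristic $c$: a line in Case~1, and either a line plus a disjoint reduced point or a line with an embedded direction in Case~2. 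In either case the scheme is non-degenerate in a $c$-dimensional linear subspace of $\mathbb{P}^{n-1}$, and a direct check yields $H_{\DSpoly/J^{sat}}(l)=l+c$ for every $l\geq 1$.

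The descent back to small degrees rests on matching two ``colon identities''. Since $J^{sat}$ is saturated, $(J^{sat})_l=(J^{sat})_m\colon(\DSpoly)_{m-l}$ for all $l\leq m$: if $\sigma\cdot(\DSpoly)_{m-l}\subseteq (J^{sat})_m$ then $\sigma\cdot\mathfrak{n}^{m-l}\subseteq J^{sat}$, and saturation yields $\sigma\in J^{sat}$. The $m$-saturation hypothesis on $I$ is the analogous identity $I_l=I_m\colon(\DSpoly)_{m-l}$ for $l\leq m$. A dimension count using $H_{\DSpoly/J^{sat}}(m)=m+c=H_A(m)$ and $(J^{sat})_m\supseteq J_m=I_m$ forces $(J^{sat})_m=I_m$, so the two colon identities combine to give $I_l=(J^{sat})_l$ for all $l\leq m$. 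Consequently $H_A(l)=H_{\DSpoly/J^{sat}}(l)=l+c$ throughout that range.

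The main obstacle I anticipate is justifying the geometric input $H_{\DSpoly/J^{sat}}(1)=c+1$. For Case~1 this is clear, since a line in $\mathbb{P}^{n-1}$ is cut out by $n-2$ independent linear forms. For Case~2 it requires either the brief classification of one-dimensional subschemes of $\mathbb{P}^{n-1}$ with Hilbert polynomial $t+2$ sketched above (whose linear span is always a $2$-plane, forcing at least $n-3$ linear forms in $J^{sat}$), or an appeal to Gotzmann's Regularity Theorem to bound $\mathrm{reg}\,J^{sat}\leq g(t+2)=2$, after which the degree-one value is pinned down by combining the lower bound $H(1)\geq 3$ from Macaulay applied to $H(2)=4$ with the span-based upper bound $H(1)\leq 3$.
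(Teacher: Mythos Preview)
Your proposal is correct and follows essentially the same approach as the paper's own proof: truncate to $J=(I_{\leq m})$, apply Gotzmann's Persistence to identify the Hilbert polynomial as $t+c$, classify the corresponding projective scheme (a line, or a line with an extra point/embedded point) to conclude that $H_{\DSpoly/J^{sat}}(l)=l+c$ for all $l\geq 1$, and then use $m$-saturation together with $(J^{sat})_m=I_m$ to pull this back to $H_A(l)$ for $l\leq m$. One small point to make explicit: when you invoke Gotzmann you need the equality $H_{\DSpoly/J}(m+1)=(m+c)^{\langle m\rangle}$ for $\DSpoly/J$, not for $A$; this follows since $J_{m+1}\subseteq I_{m+1}$ gives $H_{\DSpoly/J}(m+1)\geq H_A(m+1)=m+c+1$, while Macaulay's bound forces the reverse inequality.
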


\begin{proof}
    1. First, if $H_A(l) \leq l$ for some $l < m$, then by Macaulay's Growth
    Theorem $H_A(m)\leq l < m+1$, a contradiction. So it suffices to prove
    that $H_A(l) \leq l+1$ for all $l < m$.

    Let $J$ be the ideal generated by elements of degree at most $m$ in $I$.
    We will prove that the graded ideal $J$ of $\DSpoly$  defines a
    $\mathbb{P}^1$ linearly embedded into $\mathbb{P}^{n-1}$.

    Let $B = \DSpoly/J$. Then $H_B(m) = m+1$ and $H_B(m+1) \geq m+2$. Since $H_B(m)
    = m+1 =
    \binom{m+1}{m}$, we have $H_B(m)^{\langle m\rangle} = \binom{m+2}{m+1} =
    m+2$ and by Theorem \ref{ref:MacaulayGrowth:thm} we get $H_B(m+1)\leq m+2$, thus $H_B(m+1) = m+2$. Then by
    Gotzmann's
    Persistence Theorem $H_B(l) = l+1$ for all $l > m$. This implies that the
    Hilbert polynomial of $\Proj B \subseteq \mathbb{P}^{n-1}$ is $h_B(t) = t+1$, so
    that $\Proj B \subseteq \mathbb{P}^{n-1}$ is a linearly embedded $\mathbb{P}^1$. In
    particular the Hilbert function and Hilbert polynomial of $\Proj B$ are
    equal
    for all arguments.
    By assumption, we have $J_l = J^{sat}_l$ for all $l < m$. Then  $H_{A}(l)
     = H_{\DSpoly/J}(l) = H_{\DSpoly/J^{sat}}(l) = l+1$ for all $l < m$ and the claim of
    the lemma follows.

    2. The proof is similar to the above one; we mention only the points,
    where it changes. Let $J$ be the ideal generated
    by elements of degree at most $m$ in $I$ and $B = \DSpoly/J$. Then $H_B(m) = m
    +2 = \binom{m+1}{m} + \binom{m-1}{m-1}$, thus $H_B(m+1) \leq
    \binom{m+2}{m+1} +
    \binom{m}{m} = m+3$ and $B$ defines a closed subscheme of $\mathbb{P}^{n-1}$ with
    Hilbert polynomial $h_B(t) = t+2$. There are two isomorphism types of such
    subschemes: $\mathbb{P}^1$ union a point and $\mathbb{P}^1$ with an
    embedded double point. One checks that for these schemes the Hilbert
    polynomial is equal to the Hilbert function for all arguments and then
    proceeds as in the proof of Point 1.
\end{proof}

\begin{remark}\label{ref:GorensteinSaturated:rmk}
    \def\Dnn{\mathfrak{n}}%
    If $A = \DSpoly/I$ is a graded Artin Gorenstein algebra of socle degree $s$, then
    it is $m$-saturated for every $m\leq s$.
    Indeed, we may assume that $A = \Apolar{F}$ for some homogeneous $F\in
    \DP$ of degree $s$, then $I = \Dan{F}$. Let $\Dnn = (\Dx_1, \ldots ,\Dx_n) \subseteq
    k[\Dx_1, \ldots ,\Dx_n] = \DSpoly$.
    Take $\sigma\in (\DSpoly)_l$, then $\sigma\in I$ if and only if $\sigma\hook F =
    0$. Similarly, $\sigma\Dnn^{m-l} \subseteq I$ if and only if every element
    of $\Dnn^{m-l}$ annihilates $\sigma\hook F$. Since $\sigma\hook F$ is
    either a homogeneous polynomial of degree $s - l \geq m -l$ or it is zero, both conditions are
    equivalent.
\end{remark}

\begin{remark}
    Clearly, if two graded ideals $I$ and $J$ of $\DSpoly$ agree up to degree $m$
        and $I$ is \hbox{$m$-saturated}, then also $J$ is $m$-saturated.
\end{remark}

\subsection{Flatness over $\Spec k[t]$}

For further reference we explicitly state a purely elementary flatness
criterion. Its formulation is a bit complicated, but this is precisely the
form which is needed for the proofs. This criterion relies on the easy
observation that the torsion-free modules over $k[t]$ are flat.
\begin{prop}\label{ref:flatelementary:prop}
    Suppose $S$ is a $k$-module and $I \subseteq S[t]$ is a $k[t]$-submodule. Let $I_0 := I\cap S$.
    If for every $\lambda\in k$ we have
    \[(t-\lambda)\cap I \subseteq (t-\lambda)I + I_0[t],\] then $S[t]/I$ is a flat $k[t]$-module.
\end{prop}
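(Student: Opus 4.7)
The plan is to reduce flatness over the PID $k[t]$ to torsion-freeness. Over a principal ideal domain a module is flat iff it is torsion-free, and because $k$ is algebraically closed, every nonzero element of $k[t]$ factors into linears $t-\lambda$ with $\lambda\in k$. Hence it is enough to show that multiplication by $(t-\lambda)$ is injective on $S[t]/I$ for each $\lambda\in k$, i.e.~that $(t-\lambda)S[t]\cap I \subseteq (t-\lambda)I$.

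Fix $\lambda\in k$ and suppose $f\in S[t]$ satisfies $(t-\lambda)f\in I$. The hypothesis provides a decomposition
\[
(t-\lambda)f = (t-\lambda)g + h, \qquad g\in I,\ h\in I_0[t].
\]
Thus $h = (t-\lambda)(f-g)\in (t-\lambda)S[t]$. Writing $h=\sum_i h_i t^i$ with $h_i\in I_0$, this means $\sum_i h_i\lambda^i = 0$ in $S$, so by the standard monic division algorithm in $S[t]$ there is $h' = \sum_j h'_j t^j\in S[t]$ with $h=(t-\lambda)h'$.

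Then $(t-\lambda)(f-g-h')=0$ in $S[t]$. Choosing a $k$-basis of $S$ realises $S[t]$ as a free, hence torsion-free, $k[t]$-module, so $f = g + h'$. Provided $h' \in I_0[t]$, this gives $f\in I$ and finishes the proof.

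The one step that needs attention is showing $h'\in I_0[t]$, i.e.~that each $h'_j$ lies in $I_0 \subseteq S$; this is the essential place where the $k$-module structure of $I_0$ enters. The recursion coming from monic division reads $h'_j = h_{j+1} + \lambda h'_{j+1}$, starting from the top degree. Since $I_0$ is a $k$-submodule of $S$ and $\lambda\in k$, an induction downward on $j$ keeps each $h'_j$ inside $I_0$. Everything else is bookkeeping, so the substance of the proof is the interplay between the hypothesis, monic polynomial division, and torsion-freeness of the ambient module.
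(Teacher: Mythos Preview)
Your proof is correct and follows essentially the same approach as the paper: reduce flatness over the PID $k[t]$ to torsion-freeness, use algebraic closedness to reduce to linear factors, and then use the hypothesis together with torsion-freeness of an auxiliary module. The only difference is cosmetic: where you explicitly perform monic division and verify by downward induction that the quotient $h'$ has coefficients in $I_0$, the paper packages both this step and your subsequent torsion-freeness argument for $S[t]$ into the single observation that $S[t]/I_0[t]\simeq (S/I_0)[t]$ is a free (hence torsion-free) $k[t]$-module, so $(t-\lambda)(f-g)\in I_0[t]$ immediately forces $f-g\in I_0[t]$.
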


\begin{proof}

The ring $k[t]$ is a principal ideal domain, thus a $k[t]$-module is flat if and only
if it is torsion-free, see \cite[Cor~6.3]{EisView}.
Since every polynomial in $k[t]$ decomposes into linear factors, to prove that $M = S[t]/I$ is
torsion-free it is enough to show that $t-\lambda$ are non--zerodivisors on
$M$,~i.e.~that $(t-\lambda)x\in I$ implies $x\in I$ for all $x\in S[t]$,
$\lambda\in k$.

    Fix $\lambda\in k$ and suppose that $x\in
    S[t]$ is such that $(t-\lambda)x \in I$. Then by assumption $(t-\lambda)x\in
    (t-\lambda)I + I_0[t]$, so that $(t-\lambda)(x-i) \in I_0[t]$ for some $i\in I$.
    Since $S[t]/I_0[t]   \simeq S/I_0[t]$ is a free $k[t]$-module, we have
    $x-i\in I_0[t]\subseteq I$ and so $x\in I$.
\end{proof}

\begin{remark}\label{ref:flatnessremovet:remark}
    Let $i_1, \ldots ,i_r$ be the generators of $I$. To check the inclusion
    which is the assumption
    of Proposition~\ref{ref:flatelementary:prop}, it is enough to
    check that $s\in (t-\lambda)\cap I$ implies $s\in (t-\lambda)I + I_0[t]$
    for all $s
    = s_1 i_1 +  \ldots  + s_r i_r$, \emph{where $s_i\in S$}.

    Indeed, take an arbitrary element $s\in I$ and write $s = t_1 i_1 +  \ldots  + t_r i_r$, where $t_1, \ldots ,t_r\in
    S[t]$. Dividing $t_i$ by $t-\lambda$ we obtain $s = s_1 i_1 +  \ldots  +
    s_r i_r + (t-\lambda)i$, where $i\in I$ and $s_i\in S$. Denote $s' = s_1
    i_1 +  \ldots  + s_r i_r$, then
    $s\in (t-\lambda)\cap I$ if and only if $s'\in (t-\lambda)\cap I$ and $s\in (t-\lambda)I +
    I_0[t]$ if and only if $s'\in (t-\lambda)I + I_0[t]$.
\end{remark}

\begin{example}\label{ref:exampleflatcrit:example}
    Consider $S = k[x,y]$ and $I = xyS[t] + (x^3 - tx)S[t] \subseteq S[t]$. Take an element $s_1 xy + s_2(x^3 - tx)\in
    I$ and suppose $s_1 xy + s_2(x^3 - tx)\in (t-\lambda)S[t]$. We want to prove
that this element lies in $I_0[t] + (t-\lambda)I$.
As in Remark~\ref{ref:flatnessremovet:remark}, by subtracting an element of $I(t-\lambda)$ we may assume that $s_1, s_2$ lie in $S$.
Then $s_1 xy + s_2(x^3 - tx)\in (t-\lambda)S[t]$ if
and only if $s_1 xy + s_2(x^3 - \lambda x) = 0$. In particular we have $s_2
\in yS$ so that $s_2 (x^3 - tx)\in xyS[t]$, then $s_1 xy + s_2(x^3 - tx)\in xyS[t] \subseteq I_0[t]$.
\end{example}

Similarly as in Example~\ref{ref:exampleflatcrit:example}, in the
following we will frequently use the following easy observation, which we
state in Lemma~\ref{ref:decompositionhomog:lem}.

\begin{lem}\label{ref:decompositionhomog:lem}
    Consider
    a ring $R = B[\Dx]$ graded by the degree of $\Dx$.
    Let $d$ be a natural number and $I\subseteq R$ be a homogeneous ideal
    generated in degrees less or equal to $d$.

    Let $q\in B[\Dx]$ be an element of $\alpha$-degree strictly
    less than $d$ and such that for every $b\in B$ satisfying $b\Dx^{d}\in I$,
    we have $bq\in I$.
    Then for every $r\in R$ the condition
    \[r(\Dx^d - q)\in I\ \ \mbox{implies}\ \
        r\Dx^{d}
    \in I \ \ \mbox{and}\ \ rq\in I.\]
\end{lem}
\begin{proof}
    We apply induction with respect to $\alpha$-degree of $r$, the base
    case being $r = 0$.
    Write
    \[r = \sum_{i=0}^{m} r_i \Dx^i,\quad\mbox{where}\quad r_i\in B.\]
    The leading form of $r(\Dx^d - q)$ is $r_m\Dx^{m+d}$ and it lies in $I$. Since $I$
    is homogeneous and generated in degree at most $d$, we have $r_m \Dx^d\in I$. Then $r_m
    q\in I$ by assumption, so that $\hat{r} := r - r_m\Dx^{m}$ satisfies
    $\hat{r}(\Dx^{d} - q)\in I$. By induction we have
    $\hat{r}\Dx^d,\,\hat{r}q \in I$, then also $r\Dx^d,\,rq\in I$.
\end{proof}

\section{Standard form of the dual generator}\label{sec:dualgen}

    \begin{defn}\label{ref:standardform:def}
        Let $f\in \DP = k[x_1,\dots,x_n]$ be a polynomial of degree $s$. Let $I = \annn{\DS}{f}$
        and $A = \DS/I = \Apolar{f}$. By $\Dhdvect{\bullet}$ we denote the
        decomposition of the Hilbert function of $A$ and we set $e(a) := \sum_{t=0}^a
        \Dhd{t}{1}$.

        We say that $f$ \emph{is in the standard form} if
        \[
        f = f_0 + f_1 + f_2 + f_3 + \dots + f_s,\quad \mbox{where}\quad f_i\in \DP_i \cap
        k[x_1,\dots,x_{e(s-i)}]\mbox{ for all } i .
        \]
        Note that if $f$ is in the standard form and $\partial\in \DmmS$ then
        $f + \partial\hook f$ is also in the standard form.
        We say that an Artin Gorenstein algebra $\DS/I$ is in the \emph{standard
        form} if any (or every) dual socle generator of $\DS/I$ is in the
        standard form, see Proposition \ref{ref:standardformconds:prop} below.
    \end{defn}

    \begin{example}\label{ref:standardformofpowersum:example}
        If $f = x_1^6 + x_2^5 + x_3^3$, then $f$ is in the standard form.
        Indeed, $e(0) = 1$, $e(1) = 2$, $e(2) = 2$, $e(3) =
        3$ so that we should check that $x_1^6\in k[x_{1}]$, $x_2^5\in k[x_1,
        x_2]$, $x_3^3\in k[x_1, x_2, x_3]$, which is true. On the contrary, $g
        = x_3^6 + x_2^5 + x_1^3$ is not in the standard form, but may be put
        in the standard form via a change of variables.
    \end{example}

    The change of variables procedure of
    Example~\ref{ref:standardformofpowersum:example} may be generalised to
    prove that every
    local Artin Gorenstein algebra can be put in a standard form, as the following
    Proposition~\ref{ref:existsstandardform:prop} explains.

    \begin{prop}\label{ref:existsstandardform:prop}
        For every Artin Gorenstein algebra $\DS/I$ there is an automorphism
        $\varphi:\DS\to \DS$ such that $\DS/\varphi(I)$ is in the standard form.
    \end{prop}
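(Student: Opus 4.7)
The plan is to construct $\varphi$ as a composition $\varphi = \varphi_{s}\circ\varphi_{s-1}\circ\cdots\circ\varphi_{1}$ of automorphisms of $\DS$, each successively normalizing one homogeneous component of a chosen dual socle generator $f$ of $\DS/I$, working from the top degree downward. By Remark~\ref{ref:dualautomorphisms:remark}, the problem reduces to producing $\varphi$ with $\varphi^{*}(f)$ in standard form. The mechanism that permits a degree-by-degree strategy is this: if $\psi$ is an automorphism of $\DS$ satisfying $\psi(\Dx_i)-\Dx_i\in \DmmS^{k+1}$ for every $i$, then for every multi-index $b$ with $|b|>s-k$ the difference $\psi(\Dx^b)-\Dx^b$ lies in $\DmmS^{s+1}$ and pairs trivially with $f\in \DP_{\le s}$. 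Hence $\psi^{*}(f)$ agrees with $f$ in every degree $>s-k$, and taking $\varphi_d$ with $\varphi_d(\Dx_i)\equiv \Dx_i\pmod{\DmmS^{s-d+1}}$ preserves the normalizations already achieved in degrees $>d$.

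The base step ($d=s$) is a linear change of coordinates: because $e(0)=\Dhd{0}{1}$ equals the embedding dimension of $\Apolar{f_s}$, a linear automorphism of $\DS$ can be chosen so that, under the dual action, $f_s$ is moved into $k[x_1,\dots,x_{e(0)}]$. For the inductive step, assume $f_j\in k[x_1,\dots,x_{e(s-j)}]$ for every $j>d$, set $a:=s-d$, and look for $\varphi_d(\Dx_i)=\Dx_i+p_i$ with $p_i\in \DmmS^{a+1}$. A binomial expansion shows that only the first-order terms in the $p_i$'s survive the pairing with $f$ in degree $d$ (second-order and cross terms land in $\DS$-degree $>s$), so the change of the degree-$d$ component of $f$ is linear in the $p_i$'s. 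Accounting for the full composition $\varphi_s\circ\cdots\circ\varphi_d$, the admissible modifications of $f_d$ are precisely spanned by the degree-$(d-1)$ derivatives of the already-normalized truncation $g := f_s+f_{s-1}+\cdots+f_{d+1}$, since the degree-$m$ piece of a $p_i$ at the stage indexed by $d+k$ contributes to $f_d$ via a pairing with $f_{m+d-1}$.

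The crux of the argument, and the step I expect to be the main obstacle, is the following \emph{structural claim}: for every $j>e(a)$, the polynomial $\Dx_j\hook f_d\in \DP_{d-1}$ lies in the $k$-linear span of the degree-$(d-1)$ derivatives of $g$. This is the apolar-dual reformulation of the identity $H_{A/C(a+1)}(1)=e(a)$ built into Theorem~\ref{ref:Hfdecomposition:thm}: the graded Artin algebra $A/C(a+1)$, whose Hilbert function is $\Dhdvect{0}+\cdots+\Dhdvect{a}$, has embedding dimension exactly $e(a)$, so the $n-e(a)$ ``excess'' directions $\Dx_j$ with $j>e(a)$ must act on $f$ in a way already captured by the top $a+1$ components of $f$, i.e.\ by $g$. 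A rigorous proof requires identifying Iarrobino's filtration piece $C(a+1)$ under apolarity with an appropriate subspace attached to $g$ and tracking the pairing through that filtration.

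Granting this claim, the conclusion is mechanical: for each $j>e(a)$, choose $q_j\in \DmmS^{a+1}$ so that $q_j\hook g$ matches $\Dx_j\hook f_d$ up to the combinatorial scalar coming from the binomial expansion, and set $\varphi_d(\Dx_j):=\Dx_j-q_j$ for $j>e(a)$ and $\varphi_d(\Dx_i):=\Dx_i$ otherwise. The resulting $\varphi_d$ is a unipotent, hence invertible, automorphism of $\DS$, and by construction $\Dx_j\hook(\varphi_d^{*}f)_d = 0$ for every $j>e(a)$, so $(\varphi_d^{*}f)_d\in k[x_1,\dots,x_{e(a)}]$. Composing $\varphi_s,\varphi_{s-1},\dots,\varphi_1$ (the constant term $f_0$ is trivially already in standard form) produces the desired $\varphi$.
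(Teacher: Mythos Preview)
The paper does not give its own proof of this proposition: it simply cites \cite[Thm~5.3AB]{ia94} and \cite[Thm~4.38]{JelMSc}. Your sketch is essentially a reconstruction of the argument in those references, and it matches the strategy illustrated immediately afterward in Example~\ref{ref:iarrfavorite:ex}: work from the top degree down, at each stage apply a unipotent automorphism $\varphi_d$ with $\varphi_d(\Dx_i)\equiv\Dx_i\pmod{\DmmS^{s-d+1}}$ so as to ``linearise'' the offending variables, and observe that such an automorphism does not disturb the components already normalised.

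You have correctly isolated the one nontrivial ingredient, your ``structural claim''. In Iarrobino's language this is the statement that the $Q$-filtration on $\mathfrak{m}/\mathfrak{m}^2$, whose successive ranks are the $\Dhd{t}{1}$ and whose partial sums are the $e(a)$, can be lifted to a basis of $\mathfrak{m}$ adapted to the filtration $C(a)=(0:\mathfrak{m}^{s+1-a})$; concretely, for $j>e(a)$ one has $\Dx_j\in C(a+1)+\mathfrak{m}^2$ in the associated graded, which is exactly what lets you absorb $\Dx_j$ into a higher-order operator acting through $f_{\geq d+1}$. You are right that making this precise requires unpacking Iarrobino's filtration and is the bulk of the work; the paper outsources that entirely to the citations. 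One minor point: the modification of $f_d$ under $\varphi_d$ is not literally ``subtract a derivative of $g$'', since $\varphi_d^*$ acts nonlinearly (cf.\ the explicit computation in Example~\ref{ref:iarrfavorite:ex}), but your first-order linearisation is exactly what is needed to see that the required correction exists.
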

    \begin{proof}
        See \cite[Thm 5.3AB]{ia94}, the proof is rewritten in \cite[Thm 4.38]{JelMSc}.
    \end{proof}

    The idea of the proof of Proposition~\ref{ref:existsstandardform:prop} is
    to ``linearise'' some elements of $\DS$. This is quite technical and
    perhaps it can be best seen on the following example.
    \begin{example}\label{ref:iarrfavorite:ex}
        On this example we exhibit the proof of
        Proposition \ref{ref:existsstandardform:prop}. Let $f = x_1^6 +
        x_1^4x_2$. The annihilator of $f$ in $\DS$ is $(\Dx_2^2, \Dx_1^5 -
        \Dx_1^3\Dx_2)$, the Hilbert function of $\Apolar{f}$ is $(1, 2, 2, 2,
        1, 1, 1)$ and the symmetric decomposition is
        \[\Dhdvect{0} = (1, 1, 1,
            1, 1, 1, 1),\ \ \Dhdvect{1} = (0, 0, 0, 0, 0, 0),\ \ \Dhdvect{2} = (0, 1, 1,
        1, 0).\]
        This shows that $e(0) = 1$, $e(1) = 1$, $e(2) = 2$. If $f$ is in the
        standard form we should have
        $f_5 = x_1^4x_2 \in k[x_1, \ldots, x_{e(1)}] = k[x_1]$. This means that $f$ is not
        in the standard form. The ``reason'' for $e(1) = 1$ is the fact that
        $\Dx_1^{3}(\Dx_2 - \Dx_1^2)$ annihilates $f$, and the ``reason'' for
        $f_5\not\in k[x_1]$ is that $\Dx_2 - \Dx_1^2$ is
        not a linear form. Thus we make $\Dx_2 - \Dx_1^2$ a linear form by twisting by a
        suitable automorphism of $\DS$.

        We define an automorphism $\psi:\DS\to \DS$ by
        $\psi(\Dx_1) = \Dx_1$ and $\psi(\Dx_2) = \Dx_2 + \Dx_1^2$, so that we
        have
        $\psi(\Dx_2 - \Dx_1^2) = \Dx_2$.
        The automorphism maps the annihilator of $f$ to the ideal
        ${I := ((\Dx_2 + \Dx_1^2)^2, \Dx_1^3\Dx_2)}$. We will see that the
        algebra $\DS/I$ is in the standard form and also find a particular dual generator obtained
        from $f$.

        As mentioned in Remark \ref{ref:dualautomorphisms:remark}, the automorphism $\psi$ induces an automorphism
        $\DPut{psistar}{\psi^*}$ of the $k$-linear space $\DP_{\leq
        6}$. This automorphism maps $f$ to a dual socle
        generator $\Dpsistar{f}$ of $\DS/I$.

        The element
        $F := \Dpsistar{x_1^6}$ is the only element of $\DP$ such that
        $\psi(\Dx_1^7)\hook F = \psi(\Dx_2)\hook F = 0$,
        $\psi(\Dx_1^6)(F) = 1$ and
        $\psi(\Dx_1^{l})(F) = 0$ for $l\leq 5$. Caution: in the last line we
        use evaluation on the functional and not the induced action (see Remark
        \ref{ref:dualautomorphisms:remark}).
        One can compute that $\Dpsistar{x_1^6} = x_1^6 - x_1^4x_2 + x_1^2x_2^2
        - x_2^3$ and similarly $\Dpsistar{x_1^4x_2} = x_1^4x_2 - 2x_1^2x_2 +
        3x_2^3$ so that $\Dpsistar{f} = x_1^6 - x_1^2x_2^2 + 2x_2^3$. Now
        indeed $x_1^6\in k[x_1], x_1^2x_2^2\in k[x_1, x_2]$ and $2x_2^3\in
        k[x_1, x_2]$ so the dual socle generator is in the standard form.
    \end{example}

    We note the following equivalent conditions for a dual socle generator to
    be in the standard form.
    \begin{prop}\label{ref:standardformconds:prop}
        In the notation of Definition \ref{ref:standardform:def}, the following
        conditions are equivalent for a polynomial $f\in \DP$:
        \begin{enumerate}
            \item the polynomial $f$ is in the standard form,
            \item for all $r$ and $i$ such that $r > e(s-i)$ we have
                $\DmmS^{i - 1}\Dx_r \subseteq I = (f)^{\perp}$. Equivalently,
                for all $r$ and $i$ such that $r > e(i)$ we have
                $\DmmS^{s - i - 1}\Dx_r \subseteq I = (f)^{\perp}$.
        \end{enumerate}
    \end{prop}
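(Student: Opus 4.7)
The plan is to establish the equivalence $(1) \Leftrightarrow (2)$ by a direct degree-by-degree analysis of the contraction action, exploiting that contraction with a homogeneous element of $\DS$ of degree $d$ shifts polynomial degree by $-d$. A useful preliminary observation is that condition (2) depends only on the ideal $I = \Dan{f}$, not on the particular choice of dual socle generator. Since any two dual socle generators give the same $I$, once $(1) \Leftrightarrow (2)$ is proved it immediately yields the ``any/every'' clause in Definition~\ref{ref:standardform:def}. The ``equivalently'' reformulation inside (2) is the trivial relabeling $i \leftrightarrow s - i$, so it suffices to treat the first form.

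For $(1) \Rightarrow (2)$, fix $r$ and $i$ with $r > e(s-i)$, and by $k$-linearity (together with the fact that $f$ has finite degree, so only finitely many terms of $\partial$ contribute) reduce to showing $\partial\Dx_r \hook f = 0$ for each monomial $\partial \in \DS$ of degree $\geq i-1$. Then $\partial\Dx_r$ has degree $\geq i$, so $\partial\Dx_r \hook f_j$ automatically vanishes for $j < i$. For $j \geq i$, non-negativity of $\Dhd{t}{1}$ gives the monotonicity $e(s-j) \leq e(s-i) < r$, so by the standard form hypothesis $x_r$ does not appear in $f_j$, whence $\Dx_r \hook f_j = 0$. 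Summing over $j$ yields $\partial\Dx_r \hook f = 0$, as required.

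For $(2) \Rightarrow (1)$, fix $j$ and $r > e(s-j)$; I want $x_r$ not to appear in $f_j$, equivalently $\Dx_r \hook f_j = 0$ in $\DP_{j-1}$. Apply (2) with $i = j$, so that $\partial\Dx_r \hook f = 0$ for every $\partial \in \DmmS^{j-1}$. Taking $\partial$ a monomial of degree exactly $j-1$, the element $\partial\Dx_r$ has degree exactly $j$; the degree-zero component of $\partial\Dx_r \hook f$ is then the scalar $\partial\Dx_r \hook f_j = \partial \hook (\Dx_r \hook f_j)$, and it must vanish. Thus $\Dx_r \hook f_j \in \DP_{j-1}$ is paired to zero with every monomial of degree $j-1$ in $\DS$, and the perfect pairing between $\DP_{j-1}$ and degree $j-1$ forms in $\DS$ forces $\Dx_r \hook f_j = 0$.

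I do not expect a genuine obstacle here: the proof is careful bookkeeping with the contraction action on the graded pieces of $f$. The only ingredient worth underlining is the monotonicity of $a \mapsto e(a)$, which is immediate from $\Dhd{t}{1}\geq 0$ and the telescoping definition $e(a) = \sum_{t\leq a}\Dhd{t}{1}$; without it the compatibility between the index bound $r > e(s-i)$ appearing in (2) and the bound $r > e(s-j)$ required by the standard form in each degree $j \geq i$ would not hold.
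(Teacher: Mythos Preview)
Your proof is correct and is precisely the straightforward degree-by-degree verification that the paper alludes to; the paper itself gives no details beyond the word ``Straightforward.'' The only point you make explicit that is worth keeping is the monotonicity of $a\mapsto e(a)$, which is indeed the one non-tautological ingredient.
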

    \begin{proof}
        Straightforward.
    \end{proof}

    \begin{cor}\label{ref:automorphismpresever:cor}
        Let $f\in \DP$ be such that the algebra $\DS/I$ is in the standard
        form, where $I =\Dan{f}$.
        Let $\varphi$ be an automorphism of $\DS$ given by
        \[
            \varphi\pp{\Dx_i} = \kappa_i\Dx_i + q_i\mbox{ where }q_i\mbox{ is such
            that } \deg(q_i \hook f) \leq \deg(\Dx_i \hook f)\mbox{ and
            }\kappa_i\in k\setminus\left\{ 0 \right\}.
        \]
        Then the algebra $\DS/\varphi^{-1}(I)$ is also in the standard form.
    \end{cor}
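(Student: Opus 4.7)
The plan is to verify the equivalent ideal-theoretic condition from Proposition~\ref{ref:standardformconds:prop}(2): the algebra $\DS/\varphi^{-1}(I)$ is in standard form if and only if $\DmmS^{i-1}\Dx_r \subseteq \varphi^{-1}(I)$ whenever $r > e(s-i)$. Since $\varphi$ is an isomorphism, $\DS/I$ and $\DS/\varphi^{-1}(I)$ share the same Hilbert function and symmetric decomposition, hence the same values $e(0)\leq e(1) \leq \cdots \leq e(s)$ and the same socle degree $s$, so the thresholds are intrinsic to the isomorphism class. Applying $\varphi$ and using that an automorphism of the local ring $\DS$ preserves $\DmmS$, the required inclusion becomes
\[
    \DmmS^{i-1}\varphi(\Dx_r) \;=\; \DmmS^{i-1}(\kappa_r\Dx_r + q_r) \;\subseteq\; I.
\]

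The contribution $\DmmS^{i-1}(\kappa_r\Dx_r) \subseteq I$ is immediate from $\DS/I$ being in standard form, applied to the same pair of indices $(r,i)$. For the contribution $\DmmS^{i-1}q_r \subseteq I$, the key point is to bound $\deg(\Dx_r \hook f)$: write $f = \sum_j f_j$ with $f_j \in k[x_1,\dots,x_{e(s-j)}]$, which is possible because $f$ is in standard form. Then $\Dx_r \hook f_j$ vanishes unless $r \leq e(s-j)$; combined with the assumption $r > e(s-i)$ and the monotonicity of $e(\bullet)$, this forces $s - j > s - i$, i.e.\ $j \leq i - 1$. Hence every non-zero summand of $\Dx_r \hook f$ has degree at most $j-1 \leq i - 2$, so $\deg(\Dx_r \hook f) \leq i - 2$. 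The hypothesis $\deg(q_r \hook f) \leq \deg(\Dx_r \hook f)$ then yields $\deg(q_r \hook f) \leq i - 2$, and since contraction by any element of $\DmmS^{i-1}$ drops degree by at least $i-1$, it annihilates $q_r \hook f$. Thus $\DmmS^{i-1}q_r \subseteq \Dan{f} = I$, as required.

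The only real content is the degree estimate for $\Dx_r \hook f$; the rest is bookkeeping. The subtle point is that monotonicity of $e(\bullet)$ is used in exactly the direction that converts the hypothesis on $q_r$ into the standard-form condition for $\varphi^{-1}(I)$: the threshold beyond which $\Dx_r$ is forbidden from appearing in $f_{s-k}$ grows with $k$, which is precisely what matches the stratification $\DmmS^{i-1}\Dx_r \subseteq I$ demanded by Proposition~\ref{ref:standardformconds:prop}.
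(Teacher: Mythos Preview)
Your proof is correct and follows essentially the same approach as the paper: both reduce to the ideal-theoretic condition of Proposition~\ref{ref:standardformconds:prop}, observe that the invariants $e(\cdot)$ are preserved under isomorphism, and then check that $\DmmS^{i-1}\varphi(\Dx_r)\subseteq I$ by using the degree hypothesis on $q_r$. The only difference is cosmetic: the paper obtains the bound $\deg(\Dx_r\hook f)\leq i-2$ in one line from the inclusion $\DmmS^{i-1}\Dx_r\subseteq I$ itself (this inclusion says exactly that every $(i-1)$-st order derivative of $\Dx_r\hook f$ vanishes), whereas you re-derive the same bound by unpacking the decomposition $f=\sum_j f_j$ and invoking monotonicity of $e(\bullet)$.
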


    \begin{proof}
        The algebras $\DS/I$ and $\DS/\varphi^{-1}(I)$ are isomorphic, in
        particular they have equal functions $e(\cdot)$. By
        Proposition \ref{ref:standardformconds:prop} it suffices to
        prove that if for some $r, i$ we have $\DmmS^r \Dx_i \subseteq I$,
        then $\DmmS^r \Dx_i \subseteq \varphi^{-1}(I)$. The latter condition is equivalent to
        $\DmmS^r \varphi(\Dx_i) \subseteq I$.
        If $\DmmS^r \Dx_i \hook f = 0$ then $\deg (\Dx_i \hook f) < r$
        so, by assumption, $\deg (q_i \hook f) < r$ thus $\DmmS^r q_i \hook f
        = 0$ and
        $\DmmS^r \varphi(\Dx_i) = \DmmS^r (\kappa_i\Dx_i + q_i) \hook f = 0$.
    \end{proof}

    \begin{cor}\label{ref:basicautos:cor}
        Suppose that $q\in\DmmS^2$ does not contain $\Dx_i$ and let $\varphi:
        \DS\to \DS$ be an automorphism given by
        \[\varphi(\Dx_j) = \Dx_j\mbox{ for all }j\neq i\mbox{ and
        }\varphi(\Dx_i) = \kappa_i\Dx_i + q,\mbox{ where }\kappa_i\in
    k\setminus\{0\}.\]
        Suppose that $\DS/I$ is in the standard form, where $I = \Dan{f}$ and
        that $\deg(q\hook f) \leq \deg(\Dx_i \hook f)$. Then the algebras
        $\DS/\varphi(I)$ and $\DS/\varphi^{-1}(I)$ are also in the standard
        form.
    \end{cor}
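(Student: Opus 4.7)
The plan is to deduce both statements from Corollary~\ref{ref:automorphismpresever:cor}. The claim that $\DS/\varphi^{-1}(I)$ is in the standard form is immediate: $\varphi$ has exactly the shape required there, with $\kappa_j = 1$ and $q_j = 0$ for $j\neq i$ (so the degree condition is vacuous), and with $\kappa_i$, $q_i = q$ satisfying $\deg(q\hook f)\leq \deg(\Dx_i\hook f)$ by hypothesis.

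For the claim that $\DS/\varphi(I)$ is in the standard form, I would rewrite $\varphi(I) = (\varphi^{-1})^{-1}(I)$ and apply Corollary~\ref{ref:automorphismpresever:cor} to $\varphi^{-1}$ in place of $\varphi$. This requires an explicit description of $\varphi^{-1}$. The key observation is that, since $q$ does not contain $\Dx_i$, the subring $k[[\Dx_1,\ldots,\widehat{\Dx_i},\ldots,\Dx_n]]\subseteq \DS$ is fixed pointwise by $\varphi$; in particular $\varphi(q)=q$. Composing candidate inverses with $\varphi$ then gives
\[
    \varphi^{-1}(\Dx_j) = \Dx_j \text{ for } j\neq i, \qquad \varphi^{-1}(\Dx_i) = \kappa_i^{-1}\Dx_i - \kappa_i^{-1}q.
\]

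Setting $q_i' := -\kappa_i^{-1}q$, we have $q_i'\in \DmmS^2$, the series $q_i'$ does not contain $\Dx_i$, and $\deg(q_i'\hook f) = \deg(q\hook f) \leq \deg(\Dx_i\hook f)$ since $q_i'$ is a nonzero scalar multiple of $q$. Hence $\varphi^{-1}$ satisfies the hypotheses of Corollary~\ref{ref:automorphismpresever:cor} applied to $\DS/I$, yielding that $\DS/(\varphi^{-1})^{-1}(I) = \DS/\varphi(I)$ is in the standard form. The argument is essentially bookkeeping; the only subtle point, and the one that makes the statement work at all, is that the independence of $q$ from $\Dx_i$ reduces the inversion of $\varphi$ to a one-variable problem, so $\varphi^{-1}$ has precisely the same shape as $\varphi$ and the degree hypothesis on $q$ transfers verbatim to $q_i'$.
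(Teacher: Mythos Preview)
Your proof is correct and follows essentially the same approach as the paper: apply Corollary~\ref{ref:automorphismpresever:cor} directly to $\varphi$ for the $\DS/\varphi^{-1}(I)$ claim, then compute $\varphi^{-1}$ explicitly (the paper writes it as $\psi(\Dx_i)=\kappa_i^{-1}(\Dx_i-q)$, which is your $\kappa_i^{-1}\Dx_i-\kappa_i^{-1}q$) and observe it has the same shape, so Corollary~\ref{ref:automorphismpresever:cor} applies again to yield the $\DS/\varphi(I)$ claim. Your emphasis on why the independence of $q$ from $\Dx_i$ is essential matches the paper's Remark~\ref{ref:techbasicautos:remark}.
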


    \begin{proof}
        Note that $\psi:\DS\to \DS$ given by $\psi(\Dx_j) = \Dx_j$ for $j\neq
        i$ and $\psi(\Dx_i) = \kappa_i^{-1}(\Dx_i - q)$ is an automorphism of $\DS$
        and furthermore $\psi(\kappa_i\Dx_i + q) = \Dx_i - q + q = \Dx_i$ so
        that $\psi = \varphi^{-1}$.
        Both $\varphi$ and $\psi$ satisfy assumptions of Corollary
        \ref{ref:automorphismpresever:cor} so both $\DS/\varphi^{-1}(I)$ and
        $\DS/\psi^{-1}(I) = \DS/\varphi(I)$ are in the standard form.
    \end{proof}

    \begin{remark}\label{ref:techbasicautos:remark}
        The assumption $q\in\DmmS^2$ of Corollary~\ref{ref:basicautos:cor} is
        needed only to ensure that $\varphi$ is an automorphism of $\DS$. On
        the other hand the fact that $q$ does not contain $\Dx_i$ is
        important, because it allows us to control $\varphi^{-1}$ and in
        particular prove that $S/\varphi(I)$ is in the standard form.
    \end{remark}

    The following Corollary \ref{ref:semibasicautos:ref} is a straightforward generalisation of Corollary
    \ref{ref:basicautos:cor}, but the notation is difficult. We first choose
    a set $\mathcal{K}$ of variables. The automorphism sends each variable
    from $\mathcal{K}$ to
    (a multiple of) itself plus a suitable polynomial in variables not
    appearing in $\mathcal{K}$.

    \begin{cor}\label{ref:semibasicautos:ref}
        Take $\mathcal{K} \subseteq \{1, 2, \dots, n\}$ and $q_i\in\DmmS^2$
        for $i\in \mathcal{K}$
        which do not contain any variables from the set $\{\Dx_i\}_{i\in
            \mathcal{K}}$. Define $\varphi:
        \DS\to \DS$ by
        \[
            \varphi(\Dx_i) = \begin{cases}
                \Dx_i &\mbox{ if }i\notin \mathcal{K}\\
                \kappa_i\Dx_i + q_i,\mbox{ where }\kappa_i\in
                k\setminus\{0\} &\mbox{ if } i\in \mathcal{K}.
            \end{cases}
        \]
        Suppose that $\DS/I$ is in the standard form, where $I = \Dan{f}$ and
        that $\deg(q_i\hook f) \leq \deg(\Dx_i \hook f)$ for all $i\in
        \mathcal{K}$. Then the algebras
        $\DS/\varphi(I)$ and $\DS/\varphi^{-1}(I)$ are also in the standard
        form.\qed
    \end{cor}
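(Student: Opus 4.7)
The plan is to imitate the proof of Corollary \ref{ref:basicautos:cor}, reducing to Corollary \ref{ref:automorphismpresever:cor} applied to both $\varphi$ and its inverse. First, I would verify that $\varphi$ is actually an automorphism of $\DS$. The induced map on $\DmmS/\DmmS^2$ is diagonal with entry $\kappa_i\neq 0$ in position $i\in\mathcal{K}$ and $1$ in positions $i\notin\mathcal{K}$, hence invertible; so $\varphi$ is an automorphism of the local ring $\DS$ by the usual criterion.

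Next I would compute $\varphi^{-1}$ explicitly. The crucial structural fact is that, by hypothesis, each $q_i$ (for $i\in\mathcal{K}$) lies in $k[[\Dx_j : j\notin\mathcal{K}]]$, i.e.~it involves only variables that $\varphi$ fixes. Therefore $\varphi(q_i)=q_i$. Define $\psi:\DS\to \DS$ by
\[
    \psi(\Dx_i) = \begin{cases}
        \Dx_i & \text{if } i\notin\mathcal{K},\\
        \kappa_i^{-1}(\Dx_i - q_i) & \text{if } i\in\mathcal{K}.
    \end{cases}
\]
A direct check gives $\varphi\circ\psi(\Dx_i)=\varphi(\kappa_i^{-1}(\Dx_i-q_i)) = \kappa_i^{-1}(\kappa_i\Dx_i + q_i - q_i)=\Dx_i$ for $i\in\mathcal{K}$, and similarly in the opposite order; so $\psi = \varphi^{-1}$. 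Observe that $\psi$ has exactly the same shape as $\varphi$: for $i\in\mathcal{K}$ it sends $\Dx_i$ to $\kappa_i^{-1}\Dx_i + q_i'$ with $q_i':=-\kappa_i^{-1} q_i\in\DmmS^2$ again supported in variables indexed outside $\mathcal{K}$.

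Finally, I would verify the hypotheses of Corollary \ref{ref:automorphismpresever:cor} for both $\varphi$ and $\psi$. For $i\notin\mathcal{K}$ the perturbation is $0$, so the degree condition is trivial. For $i\in\mathcal{K}$ the condition $\deg(q_i\hook f)\leq \deg(\Dx_i\hook f)$ is assumed, and for $\psi$ the relevant perturbation $q_i' = -\kappa_i^{-1}q_i$ satisfies the same inequality because scaling by a nonzero constant does not change the degree after contraction. Corollary \ref{ref:automorphismpresever:cor} applied to $\varphi$ yields that $\DS/\varphi^{-1}(I)$ is in the standard form, and applied to $\psi=\varphi^{-1}$ yields that $\DS/\psi^{-1}(I)=\DS/\varphi(I)$ is in the standard form.

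The only step that requires any care is the explicit inversion of $\varphi$, and the single observation that makes it painless is that the $q_i$ lie in the subring fixed pointwise by $\varphi$. Without that assumption on the support of the $q_i$, describing $\varphi^{-1}$ would require solving a nontrivial system of power series equations and the clean degree bound on the perturbation of $\psi$ would no longer be automatic; as in Remark \ref{ref:techbasicautos:remark}, this is precisely the role of the support hypothesis. Given this, the rest is bookkeeping.
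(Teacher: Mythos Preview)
Your proof is correct and follows exactly the approach the paper intends: the paper states that this corollary is a straightforward generalisation of Corollary~\ref{ref:basicautos:cor} and marks it with \qed, and your argument is precisely the multi-index version of that proof---explicitly invert $\varphi$ using the fact that each $q_i$ lies in the subring fixed by $\varphi$, and then apply Corollary~\ref{ref:automorphismpresever:cor} to both $\varphi$ and $\varphi^{-1}$. Your remark on the role of the support hypothesis mirrors Remark~\ref{ref:techbasicautos:remark}.
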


\section{Special forms of dual socle generators}\label{sec:specialforms}

Recall that $k$ is an algebraically closed field of characteristic
neither $2$ nor $3$.

In the previous section we mentioned that for every local Artin Gorenstein
algebra there exists a dual socle generator in the standard form, see Definition
\ref{ref:standardform:def}. In this section we will see that in most cases we
can say more about this generator. Our main aim is to put the generator in the
form $x^{s} + f$, where $f$ contain no monomial divisible by a ``high'' power of
$x$. We will use it to prove that families arising from certain ray
decompositions (see Definition \ref{ref:raydecomposition:def}) are flat.

We begin with an easy observation.
    \begin{remark}\label{ref:removinglinearpart:remark}
        Suppose that a polynomial $f\in \DP$ is such that
        $H_{\Apolar{f}}(1)$ equals the number of variables in $\DP$. Then any linear form
        in $\DP$ is a derivative of $f$. If $\deg f > 1$ then the
        $\DS$-submodules $\DS f$ and $\DS(f - f_1
        - f_0)$ are equal, so analysing this modules we may assume $f_1 =
        f_0 = 0$, i.e.~the linear part of $f$ is zero.

        Later we use this remark implicitly.
    \end{remark}

    The following Lemma~\ref{ref:topdegreetwist:lem} provides a method to
    slightly improve the given dual socle generator. This
    improvement is the building block of all other results in this section.

\begin{lem}\label{ref:topdegreetwist:lem}
    Let $f\in \DP$ be a polynomial of degree $s$ and $A$ be the apolar algebra of
    $f$. Suppose that $\Dx_1^s\hook f\neq 0$. For every $i$ let $d_i :=
    \deg(\Dx_1\Dx_i
    \hook f) + 2$.

    Then $A$ is isomorphic to the apolar algebra of a polynomial $\hat{f}$ of
    degree $s$, such that
    $\Dx_1^s \hook \hat{f} = 1$ and $\Dx_1^{d_i - 1}\Dx_i \hook \hat{f} = 0$ for all
    $i\neq 1$. Moreover, the leading forms of $f$ and $\hat{f}$ are equal up to a
    non-zero constant. If $f$ is in the standard form, then  $\hat{f}$ is also in the standard form.
\end{lem}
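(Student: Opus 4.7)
The plan is to exhibit $\hat f$ as the image of $f$ under a rescaling followed by a single automorphism $\psi$ of $\DS$. First, dividing $f$ by the nonzero scalar $c := \Dx_1^s \hook f$ lets me assume $\Dx_1^s \hook f = 1$; this preserves the apolar algebra and the standard form, and rescales $f_s$ by $1/c$.

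For each $i\neq 1$ with $\Dx_1\Dx_i\hook f\neq 0$, the definition of $d_i$ guarantees that $\Dx_1\Dx_i\hook f$ has degree exactly $d_i-2$, so
\[
\gamma_i \;:=\; \Dx_1^{d_i-1}\Dx_i\hook f \;=\; \Dx_1^{d_i-2}\hook(\Dx_1\Dx_i\hook f)
\]
is a scalar in $k$ (the coefficient of $x_1^{d_i-2}$ in $\Dx_1\Dx_i \hook f$). I then define the $k$-algebra automorphism $\psi\colon\DS\to\DS$ by $\psi(\Dx_1)=\Dx_1$ and $\psi(\Dx_i)=\Dx_i-\gamma_i\Dx_1^{s-d_i+1}$ for $i\neq 1$ (with $\gamma_i=0$ when $\Dx_1\Dx_i\hook f=0$), and set $\hat f:=\psi^*(f)$. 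By Remark~\ref{ref:dualautomorphisms:remark}, $\Apolar{\hat f}\simeq\Apolar{f}=A$.

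To verify the prescribed contractions I will use the adjunction identity $\Dx\hook\psi^*(g)=\psi^*(\psi(\Dx)\hook g)$, which is immediate from the duality defining $\psi^*$. Since $\psi(\Dx_1)=\Dx_1$ this yields $\Dx_1^s\hook\hat f=\psi^*(1)=1$, and since $\psi(\Dx_1^{d_i-1}\Dx_i)=\Dx_1^{d_i-1}\Dx_i-\gamma_i\Dx_1^s$ it yields $\Dx_1^{d_i-1}\Dx_i\hook\hat f=\psi^*(\gamma_i-\gamma_i)=0$. Whenever $s-d_i+1\geq 2$, the correction $q_i:=-\gamma_i\Dx_1^{s-d_i+1}$ lies in $\DmmS^2$, so $\psi$ acts as the identity modulo $\DmmS^2$ and therefore $\psi^*$ is the identity on the top graded piece $\DP_s$; hence $\hat f_s=f_s$, which combined with the initial rescaling gives the two leading forms equal up to a nonzero scalar.

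Standard-form preservation will then follow from Corollary~\ref{ref:semibasicautos:ref} applied to $\mathcal{K}=\{2,\ldots,n\}$ and the $q_i$ above: each $q_i$ lies in $\DmmS^2$ and involves only $\Dx_1\notin\mathcal{K}$, while the required degree bound $\deg(q_i\hook f)=d_i-1\leq\deg(\Dx_i\hook f)$ is immediate from $\deg(\Dx_1\Dx_i\hook f)=d_i-2\leq\deg(\Dx_i\hook f)-1$. The main obstacle will be the boundary case $d_i=s$, in which $q_i=-\gamma_i\Dx_1$ is merely linear and Corollary~\ref{ref:semibasicautos:ref} does not apply directly; here the substitution threatens to shear the leading form, and one must argue separately, using the standard-form hypothesis $f_s\in k[x_1,\ldots,x_{e(0)}]$, that $\gamma_i$ vanishes for $i>e(0)$ and that the remaining linear corrections are confined to the variables already appearing in $f_s$.
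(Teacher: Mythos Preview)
Your approach is the same as the paper's: rescale so that $\Dx_1^s\hook f=1$, apply the single automorphism $\psi(\Dx_i)=\Dx_i-\gamma_i\Dx_1^{s-d_i+1}$ (the paper calls it $\varphi$, with $\lambda_i$ for your $\gamma_i$), and then invoke Corollary~\ref{ref:semibasicautos:ref} for the standard-form claim.

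There is, however, a direction error in your adjunction identity. With the convention of Remark~\ref{ref:dualautomorphisms:remark} (the apolar ideal of $\psi^*(f)$ is $\psi(I)$), the correct formula is $\psi(\sigma)\hook\psi^*(g)=\psi^*(\sigma\hook g)$, i.e.\ $\sigma\hook\psi^*(g)=\psi^*\bigl(\psi^{-1}(\sigma)\hook g\bigr)$, not $\psi^*\bigl(\psi(\sigma)\hook g\bigr)$. Using the correct formula with your $\psi$ gives $\Dx_1^{d_i-1}\Dx_i\hook\psi^*(f)=\psi^*(2\gamma_i)=2\gamma_i$, not $0$. The fix is immediate and is exactly what the paper does: take $\hat f$ to be the dual generator of $S/\psi^{-1}(I)$ (equivalently $\hat f=(\psi^{-1})^*(f)$), or, if you prefer your formulation, reverse the sign of $\gamma_i$ in the definition of $\psi$.

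Your boundary worry $d_i=s$ is handled more directly than you propose. For the standard-form claim the paper simply invokes Remark~\ref{ref:techbasicautos:remark}: the hypothesis $q_i\in\DmmS^2$ in Corollary~\ref{ref:semibasicautos:ref} is there only to guarantee that the map is an automorphism, and here $\psi$ visibly is one even when some $q_i$ is linear, so the corollary applies without further case analysis. Your suggested workaround (forcing $\gamma_i=0$ for $i>e(0)$ via the standard-form hypothesis) is neither needed for the standard-form conclusion nor sufficient to rescue the literal leading-form equality when $i\le e(0)$.
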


\begin{proof}
    By multiplying $f$ by a non-zero constant we may assume that $\DPut{xx}{\Dx_1}^s \hook f = 1$.
    Denote $I := \Dan{f}$.
    Since $\deg(\Dxx\Dx_i\hook f) = d_i - 2$, the polynomial $\Dxx^{d_i -
    1}\Dx_i\hook f = \Dxx^{d_i - 2}(\Dxx\Dx_i \hook f)$ is constant; we denote it by $\lambda_i$. Then
    \[\pp{\Dxx^{d_i - 1}\Dx_i - \lambda_i\Dxx^{s}}\hook f = 0,\mbox{ so that
    } \Dxx^{d_i - 1}\pp{\Dx_i - \lambda_i \Dxx^{s - d_i + 1}}\in I.\]
    Define an automorphism $\varphi:\DS\to \DS$ by
    \[
            \varphi(\Dx_i) = \begin{cases}
                \Dxx &\mbox{ if }i = 1\\
                \Dx_i - \lambda_i \Dxx^{s - d_i + 1} &\mbox{ if } i\neq 1,
            \end{cases}
    \]
    then $\alpha_1^{d_i - 1}\alpha_i \in \varphi^{-1}(I)$ for all $i > 1$. The
    dual socle generator $\hat{f}$ of the algebra $\DS/\varphi^{-1}(I)$ has the
    required form. We can easily check that the graded algebras of
    $\DS/\varphi^{-1}(I)$ and $\DS/I$ are equal, in particular $\hat{f}$ and $f$
    have the same leading form, up to a non-zero constant.

    Suppose now that $f$ is in the standard form.
    Let $i\in \{1,  \ldots ,n \}$. Then $d_i = \deg(\Dxx\Dx_i\hook f) + 2\leq \deg(\Dx_i
    \hook f) + 1$, so that $\deg(\Dxx^{s-d_i +1} \hook
    f) \leq d_i - 1\leq \deg(\Dx_i\hook f)$.
    Since $\varphi$ is
    an automorphism of $\DS$, by Remark~\ref{ref:techbasicautos:remark} we may
    apply Corollary~\ref{ref:semibasicautos:ref}
    to $\varphi$. Then $S/\varphi(I)$ is in
    the standard form, so $\hat{f}$ is in the standard form by definition.
\end{proof}

\begin{example}\label{ref:topdegreeexample:ex}
    Let $f\in k[x_1, x_2, x_3, x_4]$ be a polynomial of degree $s$. Suppose
    that the leading form $f_s$ of $f$ can be written as $f_s = x_1^s + g_s$
    where $g_s\in k[x_2, x_3, x_4]$. Then $\deg(\Dx_1\Dx_i\hook f) \leq s -
    3$ for all $i > 1$. Using Lemma~\ref{ref:topdegreetwist:lem} we produce $\hat{f} =
    x_1^s + h$ such that the apolar algebras of $f$ and $\hat{f}$ are
    isomorphic and
    $\Dx_1^{s-2}\Dx_i\hook h = 0$ for all $i\neq 1$. Then $\Dx_1^{s-2}\hook h
    = \lambda_1 x_1 + \lambda_2$, where $\lambda_i\in k$ for $i=1, 2$.
    After adding a suitable derivative to $\hat{f}$, we may assume $\lambda_1
    = \lambda_2 = 0$, i.e.~$\Dx_1^{s-2}\hook h = 0$.
\end{example}

\begin{example}\label{ref:standardformofstretched:ex}
    Suppose that a local Artin Gorenstein algebra $A$ of socle degree $s$ has
    Hilbert function equal to $(1, H_1, H_2, \dots, H_c, 1, \dots, 1)$. The
    standard form of the dual socle generator of $A$ is
    \[f = \DPut{ys}{x_1}^s + \kappa_{s-1}\Dys^{s-1} + \dots +
    \kappa_{c+2}\Dys^{c+2} + g,\] where $\deg g\leq c+1$ and
    $\kappa_{\bullet}\in k$. By adding a suitable
    derivative we may furthermore make all $\kappa_{i} = 0$ and assume
    that $\Dx_1^{c+1}\hook g = 0$. Using Lemma \ref{ref:topdegreetwist:lem} we
    may also assume that $\Dx_{1}^{c}\Dx_j\hook g = 0$ for every $j\neq 1$ so
    we may assume $\Dx_{1}^c \hook g = 0$, arguing as in
    Example~\ref{ref:topdegreeexample:ex}. This gives a dual socle generator
    \[f = x_1^s + g,\]
    where $\deg g \leq c+1$ and $g$ does not contain monomials divisible by
    $\Dys^c$.
\end{example}

The following proposition was proved in  \cite{CN2stretched}
under the assumption that $k$ is algebraically closed of characteristic
zero and in \cite[Thm
5.1]{JelMSc} under the assumption that $k = \mathbb{C}$. For completeness we include the
proof (with no further assumptions on $k$ other than the ones listed at the beginning of this
section).
\begin{prop}\label{ref:squares:prop}
    Let $A$ be Artin local Gorenstein algebra of socle degree $s\geq 2$ such that the Hilbert function decomposition from
    Theorem~\ref{ref:Hfdecomposition:thm} has $\Dhdvect{A, s-2} =
    (0, q, 0)$. Then $A$ is isomorphic to the apolar
    algebra of a polynomial $f$ such that $f$ is in the standard form and the
    quadric part $f_2$ of $f$ is a sum of
    $q$ squares of variables not appearing in $f_{\geq 3}$ and a quadric
    in variables appearing in $f_{\geq 3}$.
\end{prop}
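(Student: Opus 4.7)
My plan begins by invoking Proposition~\ref{ref:existsstandardform:prop} to replace $f$ by a dual socle generator in the standard form. Set $m := e(s-2)$ and $m' := e(s-3)$. Since $\Dhdvect{s-1}$ has length~$2$ and $\Dhdvect{s}$ has length~$1$, and both are symmetric with vanishing initial entry, we have $\Dhd{s-1}{1} = \Dhd{s}{1} = 0$; consequently $m - m' = \Dhd{s-2}{1} = q$ and $e(s-1) = e(s-2) = m = H_A(1) = \dim_k\mathfrak{m}_A/\mathfrak{m}_A^2$. The ``new'' variables $x_{m'+1},\dots,x_m$ do not appear in $f_{\ge 3}\in k[x_1,\dots,x_{m'}]$, whereas $f_2$ may involve any of $x_1,\dots,x_m$. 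I view $f_2$ as a symmetric bilinear form $B$ on $\langle x_1,\dots,x_m\rangle$ and denote by $B''$ its restriction to the new-variable subspace $\langle x_{m'+1},\dots,x_m\rangle$.

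\textbf{Normalisation of $f_2$, assuming non-degeneracy of $B''$.} Since $\operatorname{char} k\ne 2$ and $k$ is algebraically closed, a linear change of basis within the new variables diagonalises $B''$ as $\sum_{j=1}^{q} y_j^2$; the corresponding $\DS$-automorphism acts only on the new alphas, is covered by Corollary~\ref{ref:semibasicautos:ref}, preserves the standard form, and leaves $f_{\ge 3}$ untouched. Writing
\[
    f_2 \;=\; \sum_{j=1}^q y_j^2 \;+\; \sum_{j=1}^q y_j L_j(x_1,\dots,x_{m'}) \;+\; Q(x_1,\dots,x_{m'}),
\]
the second step is to complete the square via $y_j \mapsto z_j - L_j/2$, producing $f_2 = \sum_{j=1}^q z_j^2 + Q'$ with $Q' = Q - \tfrac{1}{4}\sum L_j^2 \in k[x_1,\dots,x_{m'}]$. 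Dually this substitution is induced by a block-triangular linear $\DS$-automorphism fixing the new alphas and sending each old $\Dx_i$ to $\Dx_i$ plus a linear combination of new alphas; it fixes the old $x$-coordinates (so $f_{\ge 3}$ is unchanged) and preserves the standard form — the hypothesis $q_i\in\DmmS^2$ of Corollary~\ref{ref:semibasicautos:ref} is used there only to secure invertibility, which is automatic in our block-triangular setting.

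\textbf{Main obstacle: non-degeneracy of $B''$.} Suppose it fails. After a linear change within the new variables we may arrange $B(x_m,x_k)=0$ for every new $k$, so $f_2 = x_m L(x_1,\dots,x_{m'}) + f_2''$ with $f_2''$ independent of $x_m$. Then $\Dx_m\hook f$ equals $L$ plus a constant (since $\Dx_m$ annihilates $f_{\ge 3}$ and $f_2''$), whence $\Dx_m\cdot\DmmS^2\hook f = 0$, i.e.~$\Dx_m\in(0:\mathfrak{m}_A^2)$ in $A$. My plan is to extract a contradiction by combining this inclusion with the intrinsic content of Theorem~\ref{ref:Hfdecomposition:thm}: the vanishings $\Dhd{s-1}{1}=\Dhd{s}{1}=0$, together with Iarrobino's filtration-by-$(0:\mathfrak{m}_A^{\bullet})$ interpretation of the $\Dhdvect{\bullet}$'s, should preclude any nonzero degree-$1$ class in $\mathfrak{m}_A/\mathfrak{m}_A^2$ from being annihilated by $\mathfrak{m}_A^2$. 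Granting this, $\Dx_m\equiv 0$ in $\mathfrak{m}_A/\mathfrak{m}_A^2$; but in the standard form the classes of $\Dx_1,\dots,\Dx_m$ form a basis of $\mathfrak{m}_A/\mathfrak{m}_A^2$ (variables at positions $>m$ already lie in $\ann(f)$ by Proposition~\ref{ref:standardformconds:prop} applied with $i=1$), contradicting $\Dx_m\equiv 0$. Turning this filtration-level argument into a watertight proof is the only real technical subtlety; everything else is routine automorphism book-keeping based on Section~\ref{sec:dualgen}.
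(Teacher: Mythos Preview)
Your overall strategy---diagonalise the restriction $B''$ of $f_2$ to the ``new'' variables, then complete the square to kill the mixed terms---is a clean global alternative to the paper's iterative approach (which peels off one square $x_n^2$ at a time via Lemma~\ref{ref:topdegreetwist:lem}). The two linear changes you describe do preserve the standard form and leave $f_{\geq 3}$ untouched, so that part is fine.

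The genuine gap is your argument for the non-degeneracy of $B''$. The implication you want---that $\Delta_{s-1}(1)=\Delta_s(1)=0$ forces $(0:\mathfrak m_A^2)\cap\mathfrak m_A\subseteq\mathfrak m_A^2$---is simply false. Take $f=x_1^s+x_2^2$ with $s\geq 3$: here $\Delta_{s-1}=(0,0)$, yet $\Dx_2\hook f=x_2$ has degree~$1$, so $\Dx_2\in(0:\mathfrak m_A^2)\cap\mathfrak m_A$ while $\Dx_2\notin\mathfrak m_A^2$. In fact, for \emph{any} $f$ in standard form, every new variable $\Dx_j$ with $j>m'$ already satisfies $\mathfrak m_S^2\Dx_j\subseteq\Dan f$ by Proposition~\ref{ref:standardformconds:prop} (take $i=3$ there). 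So membership in $(0:\mathfrak m_A^2)$ carries no information here; it is exactly where the $\Delta_{s-2}$ contribution lives.

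What actually distinguishes the degenerate case is not that $\Dx_m\in(0:\mathfrak m_A^2)$, but that $\Dx_m\hook f$ lands in $k[x_1,\dots,x_{m'}]$. The paper exploits this directly: since $f_{\geq 3}$ is in standard form with $H_{\Apolar{f_{\geq 3}}}(1)=m'$, every linear form in the old variables is a derivative of $f_{\geq 3}$ via some $\partial\in\mathfrak m_S^{\,2}$ (Remark~\ref{ref:removinglinearpart:remark}); hence $(\Dx_m-\partial-\lambda\Dx_1^s)\hook f=0$ for suitable $\lambda$, forcing $\Dx_m\in I+\mathfrak m_S^2$ and contradicting $H_A(1)=m$. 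If you substitute this for your filtration argument, your global approach goes through and is arguably tidier than the paper's induction.
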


\begin{proof}
    Let us take a standard dual socle generator $f\in \DP := k[x_1,\dots,x_n]$
    of the algebra $A$. Now we will twist $f$ to obtain the required form of
    $f_2$. We may assume that $H_{\Apolar{f}}(1) = n$.

    If $s = 2$, then the theorem follows from the fact that the quadric $f$ may be
    diagonalised. Assume $s\geq 3$.
    Let
    $\DPut{parte}{e} := e(s-3) = \sum_{t=0}^{s-3} \Dhd{A, t}{1}$. We have
    $\DPut{tote}{n} = e(s-2) = f + q$,
    so that $f_{\geq 3}\in k[x_1,\dots,x_{\Dparte}]$ and $f_2\in
    k[x_1,\dots,x_{\Dtote}]$. Note that $f_{\geq 3}$ is also in the standard
    form, so that every linear form in $x_1, \ldots ,x_{\Dparte}$ is a
    derivative of $f_{\geq 3}$, see Remark~\ref{ref:removinglinearpart:remark}.

    First, we want to assure that $\Dx_{\Dtote}^2\hook f \neq 0$. If
    $\Dx_{\Dtote}\hook f\in
    k[x_1,\dots,x_{\Dparte}]$ then there exists an operator $\partial\in \DmmS^2$ such
    that $\pp{\Dx_{\Dtote} - \partial}\hook f = 0$. This contradicts the fact
    that $f$ was in the standard form (see the discussion in Example~\ref{ref:iarrfavorite:ex}).
    So we get that $\Dx_{\Dtote} \hook f$ contains some $x_r$ for $r >
    \Dparte$, i.e.~$f$
    contains a monomial $x_rx_{\Dtote}$. A change
    of variables involving only $x_r$ and $x_{\Dtote}$ preserves the standard form and gives
    $\Dx_{\Dtote}^2 \hook f \neq 0$.

    Applying Lemma \ref{ref:topdegreetwist:lem} to $x_{\Dtote}$ we see that $f$ may
    be taken to be in the form $\hat{f} + x_{\Dtote}^2$, where $\hat{f}$ does not contain
    $x_{\Dtote}$, i.e. $\hat{f}\in k[x_1, \ldots ,x_{n-1}]$. We repeat the argument for $\hat{f}$.
\end{proof}

\begin{example}
    If $A$ is an algebra of socle degree $3$, then $H_A = (1, n, e, 1)$ for
    some $n$, $e$. Moreover, $n\geq e$ and the symmetric decomposition of $H_A$ is $(1, e, e, 1)
    + (0, n-e, 0)$. By Proposition \ref{ref:squares:prop} we see that $A$ is
    isomorphic to the apolar algebra of
    \[
        f + \sum_{e<i\leq n} x_i^2,
    \]
    where $f\in k[x_1, \ldots ,x_{e}]$.
    This claim was first proved by Elias and Rossi, see~\cite[Thm
    4.1]{EliasRossiShortGorenstein}.
\end{example}

\subsection{Irreducibility for fixed Hilbert function in two variables.}

Below we analyse local Artin Gorenstein algebras with Hilbert function $(1, 2,
2, \ldots )$. Such algebras are (in some cases) classified up to isomorphism in
\cite{EliasVallaAlmostStretched}, but rather than such classification we need
to know the geometry of their parameter space, which is analysed (among other
such spaces) in \cite{iarrobino_punctual}.

\def\HilbSr{\mathcal{H}\hspace{-0.25ex}\mathit{ilb\/}_r\Spec \DS}%
We need the following Proposition~\ref{ref:irreducibility_of_dual_socle:prop},
which is part of folklore. We thank J.~Buczy\'nski for explaining the
proof.

Let $r\geq 1$ be a natural number. By
$\HilbSr$ we denote the Hilbert
scheme of length $r$ subschemes of the power series ring $\DS$. It is called the
\emph{punctual Hilbert scheme} because as a set, $\HilbSr$ is equal to the set
of length $r$ subschemes of $\mathbb{P}^n$ supported at a single fixed point.

We recall a classical construction.
Let $V$ be a constructible subset of
$\DP_{\leq s}$. Assume that the apolar algebra $\Apolar{f}$ has length $r$ for
every closed point $f\in V$. Then we may construct the incidence scheme $\{(f,
    \Apolar{f})\}\to V$ which is a finite flat family over $V$ and thus we
    obtain a morphism from $V$ to $\HilbSr$. See \cite[Prop~4.39]{JelMSc} for
    details.

\begin{prop}\label{ref:irreducibility_of_dual_socle:prop}
    Let $\mathcal{R} \subseteq \HilbSr$ be a constructible
    subset and $V \subseteq \DP$ denote the set of all possible dual socle
    generators of elements of $\mathcal{R}$. If $\mathcal{R}$ is irreducible,
then also $V$ is irreducible.
\end{prop}

\begin{proof}
    \def\Of{O(f)}
    \def\Pf{\mathfrak{m}(f)}
    \def\Uf{U_f}
    \def\Hf{H_f}
    Below by $k^*$ and $\DS^*$ we denote the sets of invertible elements of $k$ and
    $\DS$ respectively.

    There is an induced surjective morphism $\varphi$ from $V$ to
        $\mathcal{R}$ as explained above. By construction the fiber over
    $\varphi(f)$ is $\DS^* \hook f$. The image $\mathcal{R}$ of
$\varphi$ is irreducible, so it is enough to show the existence of an open cover $\{H_i\}$
of $\mathcal{R}$ such that every $\varphi^{-1}(H_i)$ is irreducible.

    Choose an element $f\in V$ and a section of $\DmmS/\Dan{f}$ to $\DmmS$, that is, a
    linear subspace $\Pf \subseteq \DmmS$ such that
    $\Pf\to \DmmS/\Dan{f}$ is bijective. Let $\Of := \Pf
    + k \subseteq \DS$, then $\DS\hook f = \Of\hook f$. Finally let $\Of^* := k^* + \Pf$,
    so that $\varphi^{-1}(\varphi(f)) = \Of^*\hook f$.
    Consider the set
    \[\Uf = \left\{ g\in V\ |\ \Of \cap \Dan{g} = 0 \right\} =
        \left\{ g\in V\ |\ \Of\hook g = \DS\hook g\right\}.\]
    It is an open
    set in $V$ and its image $\Hf = \varphi(\Uf)$ is open (hence irreducible) in the Hilbert
    scheme. Moreover $\Uf = \varphi^{-1}(\Hf)$. For every $g\in \Uf$ the fiber
    $\varphi^{-1}(\varphi(g))$ is equal to $\Of^*\hook g$.

    By \cite[Proposition~18 and its Corollary]{emsalem} there is an open
    neighborhood $\Hf' \subseteq \Hf$ of $\varphi(f)$ such that the morphism $\varphi:
    \varphi^{-1}(\Hf')\to \Hf'$ has a section $i$. Denoting
    $\varphi^{-1}(\Hf')$ by $\Uf'$, we have a surjective morphism $\Of^*
    \times \Hf'  \to  \Uf'$ mapping $(\sigma, h)$ to $\sigma\hook i(h)$. Since
    $\Of^*$ and $\Hf'$ are irreducible, also $\Uf'$ is irreducible. Therefore
    $\{\Hf'\}$ form a desired cover of $\mathcal{R}$ and so $V$ is
irreducible.
\end{proof}

\begin{prop}\label{ref:irreducibleintwovariables:prop}
    Let $H = (1, 2, 2, *, \dots, *, 1)$ be a vector of length
    $s+1$. The set of
    polynomials $f\in k[x_1, x_2]$ such that $H_{\Apolar{f}} = H$ constitutes
    an irreducible subscheme of the affine space $k[x_1, x_2]_{\leq s}$.
    A general member of
    this set has, up to an automorphism of $\DP$ induced by an automorphism of $\DS$,
    the form $f + \partial\hook f$, where $f = x_1^s + x_2^{s_2}$ for some
    $s_2 \leq s$.
\end{prop}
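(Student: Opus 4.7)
The plan is to combine Macaulay's Growth Theorem, the standard form machinery developed in Section~\ref{sec:dualgen}, and the parameter-space results of~\cite{iarrobino_punctual} for Artin Gorenstein quotients of $k[[\Dx_1,\Dx_2]]$ with fixed Hilbert function. First, via Remark~\ref{ref:MacaulaysBoundedGrowth:rmk}, the hypotheses $H(1)=H(2)=2$ and $H(s)=1$ applied to the associated graded of $\Apolar{f}$ force $H$ to have the form $(1,2,2,\ldots,2,1,1,\ldots,1)$ with the last $2$ in some position $c\in[2,s-1]$. Setting $s_2:=c+1\leq s$, a direct computation shows that $f_0:=x_1^s + x_2^{s_2}$ has apolar ideal $(\Dx_1\Dx_2,\ s_2!\,\Dx_1^s - s!\,\Dx_2^{s_2})$, a complete intersection whose quotient has Hilbert function $H$; in particular $f_0$ lies in the set $Z$ of polynomials with $H_{\Apolar{f}}=H$.

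Next I would reduce any $f\in Z$ to a canonical form. By Proposition~\ref{ref:existsstandardform:prop} together with the refinement spelled out in Example~\ref{ref:standardformofstretched:ex}, after an automorphism of $\DP$ induced by one of $\DS$ we may assume $f = x_1^s + g$ with $\deg g\leq s_2$ and no monomial of $g$ divisible by $x_1^{s_2-1}$. The condition $H_{\Apolar{f}}(s_2-1)=2$ then forces the coefficient of $x_2^{s_2}$ in $g$ to be nonzero; rescaling makes it equal to~$1$. I would then kill the remaining monomials of $g$: pure powers $x_1^i$ and $x_2^j$ are absorbed into a shift $\partial\hook f_0$ (which does not change the apolar algebra, since $\DS f_0$ contains all such monomials), while the cross-terms $x_1^a x_2^b$ with $a,b\geq 1$ are cleared one by one by applying automorphisms permitted by Corollary~\ref{ref:semibasicautos:ref}, in an order (say by increasing $a+b$, then by $a$) that keeps the standard form and does not revive previously cleared monomials. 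The outcome is that every $f\in Z$ equals $\psi^*(f_0 + \partial\hook f_0)$ for some $\psi\in\operatorname{Aut}(\DS)$ and $\partial\in\DS$, proving the claim about the generic form.

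For irreducibility and smoothness, the above description exhibits $Z$ as the image of the morphism $\operatorname{Aut}(\DS)\times\DS\to\DP_{\leq s}$, $(\psi,\partial)\mapsto\psi^*(f_0+\partial\hook f_0)$, whose source is smooth and irreducible; since $Z$ is a single orbit of the induced action, it is smooth and irreducible. Alternatively, and more conceptually, by~\cite{iarrobino_punctual} the locus of apolar ideals $I\subseteq\DS$ with $H_{\DS/I}=H$ is a smooth and irreducible subscheme of the appropriate parameter space, and $Z$ is a Zariski-locally trivial bundle over this locus with fiber the open subset of the affine space of dual socle generators of a given ideal; both smoothness and irreducibility of $Z$ follow at once.

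The main obstacle is the inductive normalization of $g$ in the second paragraph: one must clear monomials of $g$ in a carefully chosen order using Corollary~\ref{ref:semibasicautos:ref} so that the cleared ones stay cleared and the standard form is preserved at each stage. Appealing to~\cite{iarrobino_punctual} sidesteps this at the cost of translating between the ideal-side parameter space used there and the polynomial-side affine space $\DP_{\leq s}$ of dual socle generators used in the statement.
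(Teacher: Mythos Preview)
Your appeal to \cite{iarrobino_punctual} for irreducibility and smoothness is exactly what the paper does, so that part is fine. The divergence, and the problem, is in how you handle the ``general member'' claim.

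You try to show that \emph{every} $f\in Z$ can be brought to the form $\psi^*(f_0+\partial\hook f_0)$ with $f_0=x_1^s+x_2^{s_2}$, and then deduce that $Z$ is a single orbit. This is false. Take $s_2=s$ and $f=x_1^{s-1}x_2$; then $H_{\Apolar{f}}=(1,2,\ldots,2,1)$, but the unique quadric in $\Dan{f}$ is $\Dx_2^2$, of rank one, whereas for any $\psi^*(f_0+\partial\hook f_0)$ the corresponding quadric has rank two (automorphisms of $\DS$ preserve the rank of the leading quadratic form). So $Z$ is not a single orbit, and your first irreducibility/smoothness argument collapses. The specific step that fails is the assertion that ``$H_{\Apolar{f}}(s_2-1)=2$ forces the coefficient of $x_2^{s_2}$ in $g$ to be nonzero'': in the example above (after a linear change making the $x_1^s$ coefficient nonzero) the $x_2^{s}$ coefficient remains zero. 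Even when that coefficient is nonzero, you do not actually carry out the inductive clearing of cross-terms; you flag it yourself as ``the main obstacle,'' and it is a real one.

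The paper's argument for the general member avoids all of this by working on the ideal side. Since $H_A(1)=2$, the annihilator is a complete intersection $\Dan{f}=(q_1,q_2)$ with $q_1$ of order two; for \emph{general} $f$ the quadratic part of $q_1$ has maximal rank two, so after a linear change $q_1\equiv\Dx_1\Dx_2\bmod\DmmS^3$. Because its leading form is reducible, $q_1$ itself factors as $\delta_1\delta_2$ with $\delta_i\equiv\Dx_i\bmod\DmmS^2$, and an automorphism of $\DS$ sends $\delta_i\mapsto\Dx_i$. Now $\Dx_1\Dx_2\hook f=0$ forces $f\in k[x_1]+k[x_2]$, from which the desired shape $f_0+\partial\hook f_0$ is immediate. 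The genericity is used exactly once (rank of the quadric) and replaces your entire monomial-clearing induction.
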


\begin{proof}
    Let $V \subseteq k[x_1, x_2]$ denote the set of $f$ such that
    $H_{\Apolar{f}} = H$. Then the image of $V$ under the mapping sending $f$
    to $\Apolar{f}$ is irreducible by~\cite[Thm~3.13]{iarrobino_punctual}. By
    Proposition~\ref{ref:irreducibility_of_dual_socle:prop} the set $V$ is
irreducible.

    In the case $H = (1, 1, 1,  \ldots , 1)$ the claim (with $s_2 = 0$) follows directly from
    the existence of the standard form of a polynomial. Further in the proof we assume $H(1) = 2$.

    Let us take a general polynomial $f$ such that
    $H_{\Apolar{f}} = H$. Then $\Dan{f} = (q_1, q_2)$ is a complete
    intersection, where $q_1\in\DS$ has order $2$, i.e.~$q_1\in
    \DmmS^2\setminus \DmmS^3$. Since
    $f$ is general, we may assume that the quadric part of $q_1$ has maximal
    rank, i.e. rank two, see also
    \cite[Thm~3.14]{iarrobino_punctual}. Then after a change of variables $q_1
    \equiv \Dx_1\Dx_2 \mod \DmmS^3$.
    Since the leading form $\Dx_1\Dx_2$ of $q_1$ is reducible, $q_1 = \delta_1
    \delta_2$ for some $\delta_1, \delta_2\in \DS$ such that $\delta_i \equiv
    \Dx_i \mod \DmmS^2$ for $i=1,2$,
    see~e.g.~\cite[Thm~16.6]{Kunz_plane_algebraic_curves}. After an
    automorphism of $\DS$ we may assume $\delta_i = \Dx_i$, then $\Dx_1\Dx_2 =
    q_1$ annihilates $f$, so that it has the required form.
\end{proof}

\subsection{Homogeneous forms and secant
varieties}\label{sec:homogeneousforms}

\def\kchar{\operatorname{char}\, k}
It is well-known that if $F\in \DP_s$ is a form such that $H_{\Apolar{F}} = (1, 2,
\dots, 2, 1)$ then the standard form of $F$ is either $x_1^{s} +
x_2^s$ or $x_1^{s-1}x_2$. In particular the set of such forms in $\DP$ is
irreducible
and in fact it is open in the so-called secant variety. This section is
devoted to some generalisations of this result for the purposes of
classification of leading forms of polynomials in $\DP$.

The following proposition is well-known if the base field is of characteristic
zero (see \cite[Thm 4]{BGIComputingSymmetricRank} or \cite{LO}), but we could
not find a reference for the positive characteristic case, so for completeness we
include the proof.

\begin{prop}\label{ref:thirdsecant:prop}
    Suppose that $\DPut{ff}{F}\in k[x_1, x_2, x_3]$ is a homogeneous polynomial
    of degree $\Ddegf\geq 4$.
    The following conditions are equivalent
    \begin{enumerate}
        \item the algebra $\Apolar{\Dff}$ has Hilbert function $H$ beginning
            with $H(1) = H(2) = H(3) = 3$, i.e.~$H = (1, 3, 3, 3,
             \ldots )$,
        \item after a linear change of variables $\Dff$ is in one of the forms
            \[x_1^{\Ddegf} + x_2^{\Ddegf} + x_3^{\Ddegf},\qquad x_1^{\Ddegf -
            1}x_2 + x_3^{\Ddegf},\qquad x_1^{\Ddegf - 2}(x_1 x_3 + x_2^2).\]
    \end{enumerate}
    Furthermore, the set of forms in $k[x_1, x_2, x_3]_{\Ddegf}$ satisfying
    the above conditions is irreducible.
\end{prop}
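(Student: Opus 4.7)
The plan is to handle the two implications separately and then address irreducibility. For $(2)\Rightarrow(1)$ the strategy is direct computation: for each of the three listed forms exhibit a monomial basis in every graded piece of $\Apolar{F}$ using the obvious low-degree elements of $\Dan F$ (e.g.\ $\alpha_i\alpha_j$ with $i\neq j$ annihilates $x_1^s + x_2^s + x_3^s$, after which one reads off $H = (1,3,3,\ldots,3,1)$).

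The heart of the proof is $(1)\Rightarrow(2)$. Set $I = \Dan F \subseteq \DSpoly = k[\alpha_1,\alpha_2,\alpha_3]$ and $J = \langle I_2\rangle$; since $H_A(2) = 3$ the space $I_2$ is $3$-dimensional. Macaulay's Growth Theorem gives $H_{\DSpoly/J}(3) \le 3^{\langle 2\rangle} = 4$, while $J\subseteq I$ forces $H_{\DSpoly/J}(3) \ge H_A(3) = 3$. If the value were $4$, Gotzmann's Persistence Theorem~\ref{ref:Gotzmann:thm} would make the Hilbert polynomial of $\Proj(\DSpoly/J)$ equal to $\ell + 1$, so $\Proj(\DSpoly/J)$ is a line $V(\ell) \subseteq \mathbb{P}^2$; then $\ell\cdot(\alpha_1,\alpha_2,\alpha_3) \subseteq J \subseteq I$, and the $s$-saturation of $I$ (Remark~\ref{ref:GorensteinSaturated:rmk}, applicable since $s\ge 4$) yields $\ell \in I$, forcing $F$ into a subring in two variables and $H_A(1)\le 2$, contradicting~(1). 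Hence $H_{\DSpoly/J}(3) = 3 = 3^{\langle 3\rangle}$, and Gotzmann makes the Hilbert polynomial constantly $3$; thus $Z := \Proj(\DSpoly/J) \subseteq \mathbb{P}^2$ is a length-$3$ subscheme not contained in any line.

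Next comes the classification of length-$3$ subschemes of $\mathbb{P}^2$ not lying on any line, up to $GL_3(k)$: the four orbits are (i) three reduced non-collinear points, (ii) a double point plus a simple point whose tangent line through the double misses the simple, (iii) a curvilinear triple point not on a line, and (iv) the planar fat point $k[x,y]/(x,y)^2$. For each, fix a canonical $I_Z$ (respectively $(\alpha_i\alpha_j)_{i\neq j}$, $(\alpha_1\alpha_2,\alpha_2^2,\alpha_1\alpha_3)$, $(\alpha_2^2-\alpha_1\alpha_3,\alpha_2\alpha_3,\alpha_3^2)$, $(\alpha_1^2,\alpha_1\alpha_2,\alpha_2^2)$), parameterize the $3$-dimensional space of degree-$s$ forms apolar to $I_Z$, impose the open condition $H_A(1)=3$, and normalize via further $GL_3$ substitutions. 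Cases (i)--(iii) yield the three listed normal forms; case (iv) is excluded because every apolar $F$ lies in $\operatorname{span}(x_3^s, x_1 x_3^{s-1}, x_2 x_3^{s-1})$ and has $H_A(1)\le 2$.

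For irreducibility the plan is to consider the incidence variety $\mathcal{I} = \{(Z,F) : Z \text{ non-collinear length-}3 \text{ in } \mathbb{P}^2,\ \Dan F \supseteq I_Z\}$, which is a rank-$3$ vector bundle over the open locus of non-collinear schemes in $\Hilb{2}{3}$; the latter is smooth and irreducible by Fogarty's theorem, so $\mathcal{I}$ is irreducible. The set in the proposition is then cut out as an open subvariety (by the condition $H_A(1)=3$) of the image of $\mathcal{I}$ under projection to $\DP_s$, and is therefore irreducible. The main technical obstacle is the normalization in case (iii): after rescaling the coefficient of $x_1^{s-2}x_2^2 + x_1^{s-1}x_3$ to $1$, a single linear substitution $x_3 \mapsto x_3 + \mu x_1 + \nu x_2$ with $\mu,\nu$ chosen to absorb the coefficients of $x_1^s$ and $x_1^{s-1}x_2$ produces the asymmetric form $x_1^{s-2}(x_1 x_3 + x_2^2)$; the exclusion of the planar fat point case (iv) is short but requires a separate direct computation.
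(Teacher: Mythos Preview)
Your argument follows the paper's closely, and your exclusion of the case $H_{S/J}(3)=4$ and the final classification of length-$3$ schemes are both fine. However, there is a genuine gap at the step ``$H_{\DSpoly/J}(3) = 3 = 3^{\langle 3\rangle}$, and Gotzmann makes the Hilbert polynomial constantly $3$.'' Gotzmann's Persistence Theorem~\ref{ref:Gotzmann:thm} requires as hypothesis that $H(l+1) = H(l)^{\langle l\rangle}$ for some $l$; merely knowing $H(3)=3$ and that $3^{\langle 3\rangle}=3$ gives you nothing until you have computed $H_{S/J}(4)$. Macaulay's bound yields $H_{S/J}(4)\le 3$, and for $s\ge 5$ the containment $J\subseteq I$ gives the matching lower bound $H_{S/J}(4)\ge H_A(4)=3$, so your argument goes through there. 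But for $s=4$ one only has $H_A(4)=1$, and nothing you have written rules out $H_{S/J}(4)\in\{1,2\}$; in that case $V(J)$ would have length $<3$ and the classification step collapses. The paper closes exactly this gap with a separate argument: since $A$ is Gorenstein of codimension $3$, the Buchsbaum--Eisenbud structure theorem forces the minimal number of generators of $I$ to be odd, hence at least two generators in degree $4$ beyond the three quadrics, giving $H_{S/J}(4)\ge H_A(4)+2=3$.

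Your irreducibility argument via the incidence bundle over the non-collinear locus of $\Hilb{2}{3}$ is correct and is a legitimate alternative to the paper's approach (which instead deforms each $\Gamma$ to three reduced points and invokes the induced deformation of $F$). Both routes use the irreducibility of the relevant locus in $\Hilb{2}{3}$; yours packages it as a vector-bundle projection, the paper's as a specialization argument.
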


\begin{proof}
    For the characteristic zero case see \cite{LO} and references therein.
    \def\span#1{\langle #1 \rangle}%

    \def\Dtmpy{\theta}
    Let $\DS = k[\Dx_1, \Dx_2,
    \Dx_3]$ be a polynomial ring dual to $\DP$. This notation is incoherent
    with the global notation, but it is more readable than $\DSpoly$.

    Let $I := \annn{\DS}{\Dff}$ and $I_2 := \span{\Dtmpy_1, \Dtmpy_2,
    \Dtmpy_3}
    \subseteq \DS_2$ be the linear space of operators of degree $2$
    annihilating $\Dff$. Let  $A := \DS/I$, $J := (I_2)
    \subseteq \DS$ and $B := \DS/J$.
    Since $A$ has length greater than $3\cdot 3 > 2^3$, the ideal $J$ is not a complete
    intersection. Let us analyse the Hilbert function of $A$. By symmetry of
    $H_A$, we have $H_A(s-1) = H_A(1) = 3$. By
    Remark~\ref{ref:MacaulaysBoundedGrowth:rmk} we have $3 = H_A(3) \geq H_A(4)
    \geq  \ldots \geq H_A(s-1) = 3$, thus
    \[
        H_A(m) = 3\quad\mbox{for all}\quad m = 1, 2,  \ldots , s-1.
    \]
    We will prove that the graded ideal $J$ is saturated and defines a zero-dimensional scheme of degree
    $3$ in $\mathbb{P}^2 = \Proj \DS$. First, $3 = H_A(3)
    \leq H_{B}(3) \leq 4$ by Macaulay's Growth Theorem. If
    $H_B(3) = 4$ then by Lemma~\ref{ref:P1gotzmann:lem} and
    Remark~\ref{ref:GorensteinSaturated:rmk} we have
    $H_A(1) = 2$, a contradiction. We have proved that $H_{B}(3) = 3$.

    Now we want to prove that $H_B(4) = 3$. By Macaulay's Growth Theorem applied to
    $H_B(3) = 3$ we have $H_B(4) \leq 3$. If $\Ddegf > 4$ then $H_A(4) = 3$,
    so $H_B(4) \geq 3$. Suppose $\Ddegf = 4$.  By Buchsbaum-Eisenbud result
    \cite{BuchsbaumEisenbudCodimThree} we know that the minimal number of
    generators of $I$ is odd. Moreover, we know that $A_n = B_n$ for $n < 4$,
    thus the generators of $I$ have degree two or four. Since $I_2$ is not a
    complete intersection, there are at least two generators of degree
    $4$, so $H_B(4) \geq H_A(4) + 2 = 3$.

    From $H_B(3) = H_B(4) = 3$ by Gotzmann's Persistence Theorem we see that $H_B(m) =
    3$ for all $m\geq 1$. Thus the scheme $\Gamma := V(J) \subseteq \Proj
    k[\Dx_1, \Dx_2, \Dx_3]$ is finite of degree $3$ and $J$ is saturated. In
    particular, the ideal $J = I(\Gamma)$ is contained in $I$.

    We will use $\Gamma$ to compute the possible forms of $F$, in the spirit
    of Apolarity Lemma, see \cite[Lem~1.15]{iakanev}. There are four possibilities for $\Gamma$:
    \begin{enumerate}
        \item $\Gamma$ is a union of three distinct, non-collinear points. After a change of basis $\Gamma
            = \left\{ [1:0:0] \right\} \cup \left\{ [0:1:0] \right\} \cup
            \left\{ [0:0:1] \right\}$, then $I_2 = (\Dx_1\Dx_2, \Dx_2\Dx_3,
            \Dx_3\Dx_1)$ and $\Dff = x_1^{\Ddegf} + x_2^{\Ddegf} + x_3^{\Ddegf}$.
        \item $\Gamma$ is a union of a point and scheme of length two, such
            that $\span{\Gamma} = \mathbb{P}^2$. After
            a change of basis $I_{\Gamma} = (\Dx_1^2, \Dx_1\Dx_2, \Dx_2\Dx_3)$,
            so that $\Dff = x_3^{\Ddegf-1}x_1 + x_2^{\Ddegf}$.
        \item $\Gamma$ is irreducible with support $[1:0:0]$ and it is not a
            $2$-fat point. Then $\Gamma$ is Gorenstein and so $\Gamma$ may
            be taken as the curvilinear scheme defined by $(\Dx_3^2, \Dx_2\Dx_3,
            \Dx_1\Dx_3 - \Dx_2^2)$. Then, after a linear change of variables,
            $\Dff = x_1^{\Ddegf-1}x_3 +
            x_2^2x_1^{\Ddegf-2}$.
        \item $\Gamma$ is a $2$-fat point supported at $[1:0:0]$. Then
            $I_{\Gamma} = (\Dx_2^2, \Dx_2\Dx_3, \Dx_3^2)$, so $F =
            x_1^{\Ddegf-1}(\lambda_2 x_2 + \lambda_3 x_3)$ for some $\lambda_2,
            \lambda_3\in k$. But then there is a degree one operator in $\DS$
            annihilating $F$, a contradiction.
    \end{enumerate}
    The set of forms $\Dff$ which are sums of three powers of
    linear forms is irreducible. To see that the forms satisfying the
    assumptions of the Proposition constitute an irreducible
    subset of $\DP_{\Ddegf}$ we observe that every $\Gamma$ as above is smoothable by
    \cite{CEVV}. The flat family proving the smoothability of $\Gamma$ induces a family
    $\Dff_t \to \Dff$, such that $\Dff_{\lambda}$ is a sum of three powers of linear
    forms for $\lambda\neq 0$, see \cite[Corollaire in Section 2]{emsalem}. See also \cite{bubu2010} for a generalisation of
    this method.
\end{proof}

\begin{prop}\label{ref:fourthsecant:prop}
    Let $\Ddegf \geq 4$.
    Consider the set $\DPut{set}{\mc{S}}$ of all forms $\DPut{ff}{F}\in k[x_1,
    x_2, x_3, x_4]$ of degree $\Ddegf$
    such that the apolar algebra of $\Dff$ has Hilbert function $(1, 4, 4,
    4, \dots, 4, 1)$. This set is irreducible and its general member has the
    form $\ell_1^{\Ddegf}
    + \ell_2^{\Ddegf} + \ell_3^{\Ddegf} + \ell_4^{\Ddegf}$, where $\ell_1$,
    $\ell_2$, $\ell_3$, $\ell_4$
    are linearly independent linear forms.
\end{prop}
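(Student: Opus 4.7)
My plan is to adapt the proof of Proposition~\ref{ref:thirdsecant:prop} from $\mathbb{P}^2$ to $\mathbb{P}^3$: I will produce a $0$-dimensional apolar scheme $\Gamma \subseteq \mathbb{P}^3$ of degree~$4$ and then conclude via the Apolarity Lemma together with the irreducibility of $\Hilb{3}{4}$ from \cite{CEVV}. Throughout, as in Proposition~\ref{ref:thirdsecant:prop}, I take $\DS = k[\Dx_1, \Dx_2, \Dx_3, \Dx_4]$ to be the polynomial ring dual to $\DP$.

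For $F \in \mathcal{S}$, I set $I = \Dan{F}$, let $I_2 \subseteq \DS_2$ be the $6$-dimensional space of quadratic annihilators (since $\dim \DS_2 = 10$ and $H_A(2) = 4$), and set $J = (I_2)$, $B = \DS/J$, $\Gamma = \Proj B$. By direct count $H_B(1) = H_B(2) = 4$. The technical heart of the proof is to show that $H_B(m) = 4$ for $m \in [2, s-1]$, which together with the non-degeneracy $H_A(1) = 4$ will identify $\Gamma$ as $0$-dimensional of degree exactly~$4$. Macaulay's Growth Theorem gives $H_B(3) \leq 4^{\langle 2\rangle} = 5$. Suppose for contradiction that $H_B(3) = 5$; then $5 = H_B(2)^{\langle 2\rangle}$ and Gotzmann's Persistence Theorem forces $H_B(4) = 6$, so Lemma~\ref{ref:P1gotzmann:lem}, case~$2$ with $m=3$, would conclude $H_B(1) = 3$, a contradiction. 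To apply the lemma I must verify that $J$ is $3$-saturated; this I inherit from the $3$-saturation of the Gorenstein ideal $I$ guaranteed by Remark~\ref{ref:GorensteinSaturated:rmk}, using the equalities $J_l = I_l$ for $l \leq 2$ (trivial for $l = 0, 1$ and by construction for $l = 2$). Hence $H_B(3) = 4$ and, by a dimension count, $J_3 = I_3$. An entirely analogous argument at the next step, now using $J_l = I_l$ for $l \leq 3$ to $4$-saturate $J$, rules out $H_B(4) = 5$, giving $H_B(4) = 4$. For $m \geq 4$ one checks that $4^{\langle m\rangle} = 4$, so Macaulay combined with $H_B(m) \geq H_A(m)$ yields $H_B(m) = 4$ throughout $[2, s-1]$. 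Non-degeneracy excludes $\deg \Gamma < 4$ (for otherwise $F$ would be apolar to fewer than $4$ points, contradicting $H_A(1) = 4$), while the bound $H_B(s-1) = 4$ gives $\deg \Gamma \leq 4$, so $\Gamma$ is $0$-dimensional of degree exactly~$4$.

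The Apolarity Lemma (\cite[Lem~1.15]{iakanev}) then places $F$ in the $4$-dimensional space of forms in $\DP_s$ apolar to $\Gamma$. When $\Gamma$ consists of $4$ distinct reduced points $[v_1], \ldots, [v_4]$, this space is spanned by the $\ell_i^s$ where $\ell_i = v_i \cdot x$, so $F = \sum c_i \ell_i^s$; non-degeneracy forces the $\ell_i$ to be linearly independent. For the irreducibility of $\mathcal{S}$, I appeal to \cite{CEVV}: $\Hilb{3}{4}$ is irreducible (since $4 \leq 7$). The incidence variety $\{(\Gamma, F) : I(\Gamma) \subseteq \Dan{F}\}$ projects to $\Hilb{3}{4}$ with fibres the $4$-dimensional apolar spaces, hence is irreducible; its image in $\DP_s$ is $\mathcal{S}$ by the previous paragraph, proving the claim.

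The principal obstacle is the Hilbert-function analysis: the $3$- and $4$-saturation of $J$ must be carefully inherited from the saturation of the Gorenstein ideal $I$ through the low-degree coincidences $J_l = I_l$, in concert with Macaulay's Growth, Gotzmann's Persistence, and Lemma~\ref{ref:P1gotzmann:lem}. Once $\Gamma$ is identified as $0$-dimensional of degree~$4$, the remaining steps are routine applications of classical apolarity and \cite{CEVV}.
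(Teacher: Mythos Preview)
Your overall strategy coincides with the paper's: build $\Gamma=\Proj(S/J)$ from $J=(I_2)$, show it is a degree-$4$ zero-dimensional scheme, and then use the irreducibility/smoothability of $\Hilb{3}{4}$ from \cite{CEVV}. Your upper-bound arguments ruling out $H_B(3)=5$ and $H_B(4)=5$ via saturation plus Lemma~\ref{ref:P1gotzmann:lem} are correct (the paper applies the lemma directly at $m=2$ and $m=3$ without the extra Gotzmann step, but your route works too).

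The gap is in the sentence ``Non-degeneracy excludes $\deg\Gamma<4$ (for otherwise $F$ would be apolar to fewer than $4$ points, contradicting $H_A(1)=4$)''. This presumes $I(\Gamma)=J^{\mathrm{sat}}\subseteq I$, but that does not follow from $J\subseteq I$: the ideal $I$ is Artinian, so $I^{\mathrm{sat}}=S$, and nothing prevents $J^{\mathrm{sat}}$ from containing a linear form while $I$ does not. What you have actually established is only $H_B(m)\le 4$ for $m\ge 4$ together with $H_B(m)=4$ for $2\le m\le s-1$; when $s\in\{4,5\}$ this leaves open the possibility that $H_B$ drops below $4$ at $m\ge s$, i.e.\ that $\deg\Gamma<4$. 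Your ``$H_B(4)=4$'' is therefore unjustified for $s=4$ (you only showed $\le 4$), and likewise $H_B(5)=4$ is missing for $s=4,5$.

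The paper plugs exactly this hole by a separate argument in the boundary cases. For $s=4$ it writes the symmetric minimal free resolution of the Gorenstein algebra $A$; letting $a$ be the number of degree-$4$ minimal generators of $I$, one reads off $H_B(4)=1+a$, while the Hilbert-series identity forces a middle Betti number to equal $2a-6$, whose non-negativity gives $a\ge 3$; combined with $H_B(4)\le 4$ this pins down $a=3$ and $H_B(4)=4$. An analogous resolution computation handles $H_B(5)$ for $s=4,5$. Once $H_B(4)=H_B(5)=4$, Gotzmann gives $H_B(m)=4$ for all $m\ge 1$, whence $J$ is saturated and $\deg\Gamma=4$. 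You need to supply this (or an equivalent) argument to close the proof; the non-degeneracy shortcut does not work.
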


\begin{proof}
    First, the set $\DPut{setzero}{\mc{S}_0}$ of forms equal to $\ell_1^4
    + \ell_2^4 + \ell_3^4 + \ell_4^4$, where $\ell_1$, $\ell_2$, $\ell_3$, $\ell_4$
    are linearly independent linear forms, is irreducible and contained in
    $\Dset$. Then, it is enough
    to prove that $\Dset$ lies in the closure of $\Dsetzero$.

    We follow the proof of Proposition
    \ref{ref:thirdsecant:prop}, omitting some details which can be found there.
    Let $S = k[\Dx_1, \Dx_2, \Dx_3, \Dx_4]$, $I := \Dan{\Dff}$ and $J :=
    (I_2)$. Set $A = S/I$ and $B = S/J$. Then
    $H_B(2) = 4$ and $H_B(3)$ is either $4$ or $5$. If $H_B(3) = 5$, then by
    Lemma~\ref{ref:P1gotzmann:lem} we have $H_B(1) = 3$, a contradiction. Thus
    $H_B(3) = 4$.

    Now we would like to prove $H_B(4) = 4$. By
    Macaulay's Growth Theorem $H_B(4) \leq 5$. By
    Lemma~\ref{ref:P1gotzmann:lem} $H_B(4) \neq 5$, thus $H_B(4) \leq 4$. If
    $\Ddegf > 4$ then $H_B(4) \geq H_A(4) \geq 4$, so we concentrate on
    the case $\Ddegf = 4$.
    Let us write the minimal free resolution of $A$, which is symmetric by
    \cite[Cor 21.16]{EisView}:
    \[
        0\to S(-8) \to S(-4)^{\oplus a}\oplus S(-6)^{\oplus 6} \to
        S(-3)^{\oplus b}\oplus S(-4)^{\oplus c} \oplus S(-5)^{\oplus b}\to
        S(-2)^{\oplus 6} \oplus S(-4)^{\oplus a}\to S.
    \]
    Calculating $H_A(3) = 4$ from the resolution, we get $b = 8$. Calculating
    $H_A(4) = 1$ we obtain $6 - 2a + c = 0$. Since $1 + a = H_B(4) \leq 4$ we have $a\leq 3$,
    so $a = 3$, $c = 0$ and $H_B(4) = 4$.

    Now we calculate $H_B(5)$. If $\Ddegf > 5$ then $H_B(5) = 4$ as before.
    If $\Ddegf = 4$ then extracting syzygies of $I_2$ from the above resolution
    we see that $H_B(5) = 4
    + \gamma$, where $0\leq \gamma\leq 8$, thus
    $H_B(5) = 4$ and $\gamma = 0$.
    If $\Ddegf = 5$, then the resolution of $A$ is
    \[
        0\to S(-9)\to S(-4)^{\oplus 3}\oplus S(-7)^{\oplus 6}\to S(-3)^{\oplus
        8}\oplus S(-6)^{\oplus 8}\to S(-5)^{\oplus 3}\oplus S(-2)^{\oplus 6}
        \to S.
    \]
    So $H_B(5) = 56 - 20\cdot 6 + 8 = 4$. Thus, as in the previous case we see that $J$ is the
    saturated ideal of a scheme $\Gamma$ of degree $4$. Then $\Gamma$
    is smoothable by \cite{CEVV} and its smoothing induces a family $\Dff_t\to
    \Dff$, where $\Dff_{\lambda}\in \Dsetzero$ for $\lambda\neq 0$.
\end{proof}

The following Corollary~\ref{ref:equationforsecant:cor} is a consequence of
Proposition~\ref{ref:fourthsecant:prop}. This corollary is not used in the
proofs of the main results, but it is of certain interest of its own and shows
another connection with secant varieties. For simplicity and to refer to some
results from \cite{LO}, we assume that $k = \mathbb{C}$, but
the claim holds for all fields of characteristic either zero or large
enough.

\def\catal{\varphi_{a, \Ddegf-a}}
To formulate the claim we introduce catalecticant matrices.
Let $\varphi_{a, \Ddegf-a}: \DS_{a}\times \DP_{\Ddegf}\to\DP_{\Ddegf-a}$ be the contraction mapping
applied to homogeneous polynomials of degree $\Ddegf$. For $F\in \DP_\Ddegf$
we obtain $\catal(F): \DS_a \to \DP_{\Ddegf-a}$, whose matrix is called the \emph{$a$-catalecticant
matrix}. It is straightforward to see that $\rk \catal(F) =
H_{\Apolar{F}}(a)$.

\def\floor#1{\left\lfloor #1 \right\rfloor}%
\begin{cor}\label{ref:equationforsecant:cor}
    Let $\Ddegf \geq 4$ and $k = \mathbb{C}$.
    The fourth secant variety to $\Ddegf$-th Veronese reembedding of
    $\mathbb{P}^n$ is a subset $\sigma_4(v_\Ddegf(\mathbb{P}^n)) \subseteq \mathbb{P}(P_s)$ set-theoretically defined
    by the condition $\rk \catal \leq 4$, where $a = \floor{\Ddegf/2}$.
\end{cor}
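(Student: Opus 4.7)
The inclusion $\sigma_4(v_s(\mathbb{P}^n))\subseteq\{[F]:\rk\catal(F)\le 4\}$ is immediate from semicontinuity of matrix rank: for any linear form $\ell$ the catalecticant $\catal(\ell^s)$ has rank one, so $\catal(\ell_1^s+\cdots+\ell_4^s)$ has rank at most four, and the bound survives limits.

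For the converse, fix $F$ with $\rk\catal(F)=H_{\Apolar F}(a)\le 4$ and set $A=\Apolar F$. The plan is to reduce to Proposition~\ref{ref:fourthsecant:prop} in two steps.

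First I would establish that $H_A(1)\le 4$. Because $F$ is a form, $A$ is graded Artinian Gorenstein, hence its Hilbert function is symmetric: $H_A(i)=H_A(s-i)$. The key numerical input is that $4^{\langle m\rangle}=4$ for every $m\ge 4$. For $s\ge 8$ we have $a\ge 4$ and $H_A(a)\le 4\le a$, so Remark~\ref{ref:MacaulaysBoundedGrowth:rmk} forces $H_A$ to be bounded by $4$ from position $a$ onwards; symmetry then gives $H_A(1)=H_A(s-1)\le 4$. The case $s=7$ (so $a=3$) is analogous: from $H_A(4)=H_A(3)\le 4$, repeated application of Macaulay's Growth Theorem propagates the bound outwards to $H_A(5),H_A(6)\le 4$. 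The remaining low degrees $s\in\{4,5,6\}$ are more delicate, since Macaulay's Growth alone admits pathological symmetric sequences such as $(1,5,4,5,1)$ compatible with $H_A(a)\le 4$; these are excluded by combining Macaulay's Growth with Lemma~\ref{ref:P1gotzmann:lem} applied to the truncated ideal $(I_{\le a})$ together with the Buchsbaum--Eisenbud symmetric resolution of the Gorenstein ideal $\Dan F$ in codimension at most five.

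Having established $H_A(1)\le 4$, a linear change of coordinates on $\mathbb{P}^n$ (which acts compatibly on $\sigma_4(v_s(\mathbb{P}^n))$ and on the catalecticant rank) lets us assume $F\in k[x_1,x_2,x_3,x_4]$. If the middle values of $H_A$ are all equal to $4$, Proposition~\ref{ref:fourthsecant:prop} applies directly and gives $F\in\sigma_4(v_s(\mathbb{P}^3))\subseteq\sigma_4(v_s(\mathbb{P}^n))$. Otherwise, imitating the proof of that proposition, the ideal $J=(I_{\le a})\subseteq I=\Dan F$ is shown, via Macaulay's Growth Theorem, Lemma~\ref{ref:P1gotzmann:lem}, and the symmetric Buchsbaum--Eisenbud resolution of $A$, to be the saturated ideal of a zero-dimensional subscheme $\Gamma\subseteq\mathbb{P}^3$ of degree at most four. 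By the main result of~\cite{CEVV} every such $\Gamma$ is smoothable, and any smoothing $\Gamma\leadsto\Gamma_t$ induces, through apolarity as in \cite{emsalem}, a flat family $F_t\to F$ whose general member has the form $\sum_{i=1}^4\ell_{i,t}^s$. Hence $F\in\sigma_4(v_s(\mathbb{P}^n))$.

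The main obstacle is the small socle degree range $s\in\{4,5,6\}$ in the first step: Macaulay's Growth alone is insufficient to rule out non-unimodal symmetric sequences with $H_A(a)\le 4$ but $H_A(1)>4$, so genuine structural input from the classification of Gorenstein ideals of small codimension is needed to pin down $H_A(1)\le 4$ and reach the hypotheses of Proposition~\ref{ref:fourthsecant:prop}.
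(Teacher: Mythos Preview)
Your outline has the right target (reduce to Proposition~\ref{ref:fourthsecant:prop}) but the execution has two gaps, and the paper's route is both simpler and complete.

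The first gap is your treatment of $s\in\{4,5,6\}$. You invoke a ``Buchsbaum--Eisenbud symmetric resolution in codimension at most five'', but the Buchsbaum--Eisenbud structure theorem is specific to codimension~$3$ and plays no role here; Lemma~\ref{ref:P1gotzmann:lem} alone suffices once the argument is organised differently. The paper first disposes of the case $H_A(1)\le 3$ by citing \cite[Thm~3.2.1(2)]{LO}. Assuming $H_A(1)\ge 4$, it then proves the \emph{lower} bound $H_A(n)\ge 4$ for all $0<n<s$: Remark~\ref{ref:MacaulaysBoundedGrowth:rmk} plus symmetry give it for $n\ge 4$; then $H_A(4)\ge 4$ forces $H_A(3)^{\langle 3\rangle}\ge 4$, hence $H_A(3)\ge 4$; and $H_A(2)=3$ together with $H_A(3)=4$ would trigger Lemma~\ref{ref:P1gotzmann:lem} and yield $H_A(1)=2$, a contradiction. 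With the lower bound in hand one has $H_A(a)=4$ exactly, and the upper bound now propagates: for $a\ge 4$ by Remark~\ref{ref:MacaulaysBoundedGrowth:rmk}, while for $a\in\{2,3\}$ the only Macaulay-admissible jump $H_A(a)=4$, $H_A(a+1)=5$ is ruled out by Lemma~\ref{ref:P1gotzmann:lem} (it would force $H_A(1)\le 3$). Hence $H_A=(1,4,\ldots,4,1)$ on the nose and Proposition~\ref{ref:fourthsecant:prop} applies directly. No resolution computations or structure theorems beyond Lemma~\ref{ref:P1gotzmann:lem} are needed for any~$s$.

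The second gap is your ``otherwise'' branch, where the middle values are not all~$4$. With the argument above this branch is empty when $H_A(1)\ge 4$, so it only covers $H_A(1)\le 3$. Your plan there---show that $(I_{\le a})$ is the saturated ideal of a scheme of degree $\le 4$ and smooth it---would amount to reproving the catalecticant description of $\sigma_r(v_s(\mathbb{P}^n))$ for $r\le 3$, which you do not carry out (and which is not a trivial imitation of Proposition~\ref{ref:fourthsecant:prop}, since that proposition uses the exact value $H_A(1)=4$ to control the resolution). The paper sidesteps this entirely via the citation to \cite{LO}.
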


\begin{proof}
    \def\Dh#1{H(#1)}
    Since $H_{\Apolar{F}}(a) \leq 4$ for $F$ which is a sum of four powers of
    linear forms, by semicontinuity every $F\in \sigma_4(v_\Ddegf(\mathbb{P}^n))$ satisfies the
    above condition.

    Let $F\in \DP_\Ddegf$ be a form satisfying $\rk \catal(F) \leq 4$. Let $A = \Apolar{F}$ and $H = H_A$ be the Hilbert
    function of $A$. We want to reduce to the case where $\Dh{n} = 4$ for all $0 < n < \Ddegf$.

    First we show that $\Dh{n} \geq 4$ for all $0 < n < \Ddegf$.
    If $\Dh{1}\leq 3$, then the claim follows from~\cite[Thm~3.2.1~(2)]{LO},
    so we assume $\Dh{1} \geq 4$.
    Suppose that for some $n$ satisfying  $4 \leq n < \Ddegf$ we
    have $\Dh{n} < 4$. Then by Remark~\ref{ref:MacaulaysBoundedGrowth:rmk} we
    have $\Dh{m} \leq \Dh{n}$ for
    all $m \geq n$, so that $\Dh{1} = \Dh{\Ddegf-1} < 4$, a contradiction. Thus $\Dh{n}\geq
    4$ for all $n\geq 4$. Moreover, $\Dh{3}\geq 4$ by Macaulay's Growth Theorem. Suppose now
    that $\Dh{2}
    < 4$. By Theorem~\ref{ref:MacaulayGrowth:thm} the
    only possible case is $\Dh{2} = 3$ and $\Dh{3} = 4$. But then $\Dh{1} =2 <
    4$ by Lemma~\ref{ref:P1gotzmann:lem}, a contradiction. Thus we have proved
    that
    \begin{equation}\label{eq:lowerbound}
        \Dh{n} \geq 4\quad \mbox{for all}\quad 0 < n < \Ddegf.
    \end{equation}

    We have $\Dh{a} = 4$. If $\Ddegf\geq 8$, then $a\geq 4$, so by
    Remark~\ref{ref:MacaulaysBoundedGrowth:rmk} we have $\Dh{n}
    \leq 4$ for all $n > a$. Then by the symmetry $\Dh{n} = \Dh{\Ddegf - n}$ we have $\Dh{n}
    \leq 4$ for all $n$. Together with $\Dh{n}\geq 4$ for $0 < n < \Ddegf$, we have
    $\Dh{n} = 4$ for $0 < n < \Ddegf$. Then $F\in \sigma_4(v_\Ddegf(\mathbb{P}^n))$ by
    Proposition~\ref{ref:fourthsecant:prop}.
    If $a = 3$ (i.e.~$\Ddegf = 6$ or $\Ddegf = 7$), then $\Dh{4} \leq 4$ by
    Lemma~\ref{ref:P1gotzmann:lem} and we finish the proof as in the case
    $\Ddegf
    \geq 8$.
    If $\Ddegf = 5$, then $a = 2$ and the Hilbert function of $A$ is $(1, n, 4, 4,
    n, 1)$. Again by Lemma~\ref{ref:P1gotzmann:lem}, we have $n\leq 4$, thus
    $n = 4$ by \eqref{eq:lowerbound} and
    Proposition~\ref{ref:fourthsecant:prop} applies.
    If $\Ddegf = 4$, then $H = (1, n, 4, n, 1)$. Suppose $n\geq 5$, then
    Lemma~\ref{ref:P1gotzmann:lem} gives $n\leq 3$, a contradiction. Thus
    $n = 4$ and Proposition~\ref{ref:fourthsecant:prop} applies also to this case.
\end{proof}

Note that for $s\geq 8$ the Corollary~\ref{ref:equationforsecant:cor} was also
proved, in the case $k = \mathbb{C}$, in \cite[Thm~1.1]{bubu2010}.

\section{Ray sums, ray families and their flatness}\label{sec:ray}

Recall that $k$ is an algebraically closed field of characteristic
neither $2$ nor $3$.
Since $k[[\Dx_i]]$ is a discrete valuation ring, all its ideals have the form
$\Dx_i^{\DPut{ord}{\nu}}k[[\Dx_i]]$ for some $\Dord \geq 0$. We use this property to construct certain
decompositions of the ideals in the power series ring $\DS$.

\def\Drord#1#2{\operatorname{rord}_{#1}\left( #2 \right)}%
\def\Dcomp#1{\mathfrak{p}_{#1}}%
\begin{defn}\label{ref:order:def}
    Let $I$ be an ideal of finite colength in the power series ring
    $\DPut{tmpseries}{k[[\Dx_1,\dots,\Dx_n]]}$
    and $\pi_i:\Dtmpseries \onto k[[\Dx_i]]$ be the projection defined by
    $\pi_i(\Dx_j) = 0$ for $j\neq i$ and $\pi_i(\Dx_i) = \Dx_i$.

    The $i$-th \emph{ray order} of $I$ is a non-negative
    integer $\Dord  = \Drord{i}{I}$ such that $\pi_i(I) = (\Dx_i^\Dord )$.
\end{defn}

By the discussion above, the ray order is
well-defined.
Below by $\Dcomp{i}$ we denote the kernel of $\pi_i$; this is the ideal
generated by all variables except for $\Dx_i$.

\begin{defn}\label{ref:raydecomposition:def}
    Let $I$ be an ideal of finite colength in the power series ring
    $\DS = \Dtmpseries$. A \emph{ray decomposition} of $I$ with respect to
    $\Dx_i$ consists of
    an ideal $J \subseteq S$, such that $J \subseteq I \cap \Dcomp{i}$, together with an
    element $q\in \Dcomp{i}$ and $\Dord\in \mathbb{Z}_{+}$ such that
    \[
        I = J + (\Dx_i^{\Dord} - q)\DS.
    \]
\end{defn}

Note that from Definition \ref{ref:order:def} it follows that for every
$I$ and $i$ a ray decomposition (with $J = I\cap \Dcomp{i}$) exists and that
$\Dord = \Drord{i}{I}$ for every ray decomposition.

\begin{defn}\label{ref:rayfamily:def}
    \def\DJpoly{J_{poly}}%
    Let $\DS = k[[\Dx_1, \ldots ,\Dx_n]]$ and $\DSpoly = k[\Dx_1, \ldots
    ,\Dx_n] \subseteq \DS$.
    Let $I = J + \left( \Dx_{i}^\Dord - q \right)\DS$ be a ray
    decomposition of a finite colength ideal $I \subseteq \DS$. Let $\DJpoly =
    J\cap \DSpoly$. The associated \emph{lower ray family} is
    \[
        k[t] \to \frac{\DSpoly[t]}{\DJpoly[t] + (\Dx_i^\Dord -
        t\cdot\Dx_i - q)\DSpoly[t]},
    \]
    and the associated \emph{upper ray family} is
    \[
        k[t] \to \frac{\DSpoly[t]}{\DJpoly[t] + (\Dx_i^\Dord -
            t\cdot\Dx_i^{\Dord-1} -
    q)\DSpoly[t]}.
    \]
    If the lower (upper) family is flat over $k[t]$ we will call it a \emph{lower (upper) ray degeneration}.
\end{defn}

Note that the lower and upper ray degenerations agree for $\Dord = 2$.

\begin{remark}
In all considered cases the quotient $\DSpoly/J_{poly}$ will be finite over
$k$, so that every ray family will be finite over $k[t]$.
Then every ray degeneration will give a morphism to the Hilbert scheme.
We leave this check to the reader.
\end{remark}

\begin{remark}\label{ref:fibers:rmk}
    In this remark for simplicity we assume that $i = 1$ in
    Definition~\ref{ref:rayfamily:def}. Below we
    write $\Dx$ instead of $\Dx_1$.
    Let us look at the  fibers of the upper ray family from this definition
    in a special case, when $\Dx\cdot q\in J$.
    The fiber over $t=0$ is isomorphic to $\DS/I$.
    Let us take $\lambda\neq 0$ and analyse the fiber at $t=\lambda$. This
    fiber is supported at $(0, 0, \dots, 0)$ and at $(0, \dots, 0, \lambda, 0, \dots,
    0)$, where $\lambda$ appears on the $i$-th position.
    In particular, this shows that the
    existence of an upper ray degeneration proves that the algebra $\DS/I$ is
    limit-reducible; this is true also for the lower ray degeneration.

    Now $\Dx^{\Dord+1} - \lambda\Dx^\Dord$ is in the ideal defining the fiber
    of the upper ray family over $t
    = \lambda$. Now one
    may compute that
    near $(0, \dots, 0)$ the ideal defining the fiber is
    $(\lambda\Dx^{\Dord-1} - q) + J$. Similarly
     near $(0, \dots, 0, \lambda, 0, \dots, 0)$ it is $(\Dx - \lambda) + (q) +
     J$. The argument is similar (though easier) to the proof of
     Proposition~\ref{ref:fibersofray:prop}.
\end{remark}

Most of the families constructed in \cite{CEVV} and \cite{cn09} are ray
degenerations.

\begin{defn}\label{ref:raysum:def}
    For a non-zero polynomial $f\in \DP$ and $d\geq 2$ the $d$-th \emph{ray sum of $f$ with respect to
    a derivation $\partial\in \DmmS$} is a polynomial $g\in \DP[x]$ given by
    \[g = f + \DPut{specvar}{x}^{d}\cdot\partial \hook f +
        \Dspecvar^{2d}\cdot\partial^2\hook f +
    \Dspecvar^{3d}\cdot\partial^{3}\hook f + \dots\ .\]
\end{defn}

The following proposition shows that a ray sum naturally induces a ray
decomposition, which can be computed explicitly.

\begin{prop}\label{ref:raysumideal:prop}
    Let $g$ be the $d$-th ray sum of $f$ with respect to $\partial\in \DmmS$
    such that $\partial\hook f\neq 0$. Let $\Dx$ be an
    element dual to $x$, so that $P[x]$ and $\DPut{T}{T}:= \DS
    [[\Dx]]$ are dual.
    The annihilator of $g$ in $\DT$ is
    given by the formula
    \begin{equation}\label{eq:anndecomposition}
        \annn{\DT}{g} = \annn{\DS}{f} + \pp{\sum_{i=1}^{d-1} k\Dx^i}
        \annn{\DS}{\partial\hook f} + (\Dx^{d} - \partial)\DT,
    \end{equation}
    where the sum denotes the sum of $k$-vector spaces. In particular,
    the ideal $\annn{\DT}{g} \subseteq \DT$ is generated by $\annn{\DS}{f}$,
    $\Dx\annn{\DS}{\partial\hook f}$ and $\Dx^d - \partial$.
    The formula \eqref{eq:anndecomposition} is a ray decomposition of
    $\annn{\DT}{g}$ with respect to $\Dx$ and with $J = \annn{\DS}{f}\DT +
     \Dx\annn{\DS}{\partial\hook f}\DT$ and $q = \partial$.
\end{prop}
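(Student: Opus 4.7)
The plan is to prove the asserted formula \eqref{eq:anndecomposition} by two inclusions; the ``in particular'' and the ray-decomposition statements will then follow at once. Throughout, write $R_h := h + x^d(\partial\hook h) + x^{2d}(\partial^2\hook h) + \cdots$ for the ray sum of any $h\in\DP$, so that $g = R_f$.

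First I would check $\supseteq$ on generators. For $\sigma\in\Dan{f}$, which involves no $\Dx$, one has $\sigma\hook g = R_{\sigma\hook f} = 0$. For $\Dx^i\sigma$ with $1\le i\le d-1$ and $\sigma\in\Dan{\partial\hook f}$: $\Dx^i$ annihilates the $i=0$ summand $f$ of $g$, while each remaining summand $x^{jd}(\partial^j\hook f)$ with $j\ge 1$ is annihilated by $\sigma$ since $\sigma\hook\partial^j\hook f = \partial^{j-1}\hook(\sigma\hook\partial\hook f) = 0$. Finally $\Dx^d\hook g = \sum_{i\ge 1}x^{(i-1)d}(\partial^i\hook f) = \partial\hook g$.

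For the reverse inclusion the essential preparation is a normal form modulo $\Dx^d - \partial$ for elements of $\DT$. Given $\tau = \sum_{i\ge 0}\Dx^i\tau_i\in\DT$ with $\tau_i\in\DS$, I would set $\sigma_r := \sum_{q\ge 0}\partial^q\tau_{qd+r}\in\DS$ for $0\le r\le d-1$; each such series converges in the $\DmmS$-adic topology because $\partial\in\DmmS$. Using the telescoping factorisation $\Dx^{qd} - \partial^q = (\Dx^d - \partial)\sum_{j=0}^{q-1}\Dx^{(q-1-j)d}\partial^j$ and reorganising, one checks that $\tau - \widetilde\tau\in(\Dx^d - \partial)\DT$, where $\widetilde\tau := \sum_{r=0}^{d-1}\Dx^r\sigma_r$. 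Now suppose $\tau\hook g = 0$; since $(\Dx^d-\partial)\hook g = 0$, also $\widetilde\tau\hook g = 0$. A direct computation using $\Dx^r\hook g = x^{d-r}R_{\partial\hook f}$ for $1\le r\le d-1$ gives
\[
\widetilde\tau\hook g = R_{\sigma_0\hook f} + \sum_{r=1}^{d-1} x^{d-r} R_{\sigma_r\hook\partial\hook f}.
\]
The $x$-degrees appearing in the $r$-th summand lie in the residue class $-r\pmod d$, so the $d$ summands occupy pairwise disjoint residue classes modulo $d$ and must vanish individually; the $x^0$-coefficient of each ray sum then yields $\sigma_0\in\Dan{f}$ and $\sigma_r\in\Dan{\partial\hook f}$ for $r\ge 1$, placing $\widetilde\tau$ and therefore $\tau$ in the right-hand side of \eqref{eq:anndecomposition}. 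The ray-decomposition assertion is now immediate: $J\subseteq\Dcomp{\Dx}$ because $\Dan{f},\Dan{\partial\hook f}\subseteq\DmmS\subseteq\Dcomp{\Dx}$, and $q = \partial\in\DmmS\subseteq\Dcomp{\Dx}$.

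The hard part will be the normal-form reduction. Because $\DT = \DS[[\Dx]]$ is a power-series ring, polynomial long division by $\Dx^d - \partial$ is not directly available; one must justify convergence of the series defining each $\sigma_r$ in the $\DmmS$-adic topology and then verify that the remainder genuinely lies in the principal ideal $(\Dx^d-\partial)\DT$, for which the telescoping identity above is the decisive tool.
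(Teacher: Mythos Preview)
Your proof is correct and follows essentially the same approach as the paper: reduce an arbitrary annihilator modulo $\Dx^d - \partial$ to a polynomial in $\Dx$ of degree $<d$ with coefficients in $\DS$, then apply it to $g$ and read off the conditions on the coefficients from the $x$-grading. Your treatment is in fact more careful than the paper's on one point: you explicitly justify the convergence of $\sigma_r = \sum_{q\ge 0}\partial^q\tau_{qd+r}$ and verify via the telescoping identity that the remainder lies in $(\Dx^d-\partial)\DT$, whereas the paper simply says ``reducing the powers of $\Dx$ using $\Dx^d-\partial$'' and proceeds.
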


\begin{proof}
    It is straightforward to see that the right hand side of Equation
    \eqref{eq:anndecomposition} lies in $\annn{\DT}{g}$.
    Let us
    take any $\partial'\in \annn{\DT}{g}$. Reducing the powers of $\Dx$ using
    $\Dx^{d} - \partial$ we can write
    \[
        \partial' = \sigma_0 + \sigma_1\Dx + \dots +
        \sigma_{k-1}\Dx^{d-1},
    \]
    where $\sigma_{\bullet}$ do not contain $\Dx$. The action of this derivation
    on $g$ gives
    \[
        0 = \sigma_0 \hook f + \Dspecvar \sigma_{d-1}\partial \hook f +
        \Dspecvar^{2} \sigma_{d-2}\partial\hook f + \dots +
        \Dspecvar^{d-1}\sigma_1\partial \hook f + \Dspecvar^d\left( \dots \right).
    \]
    We see that $\sigma_0\in \annn{\DS}{f}$ and $\sigma_i \in
    \annn{\DS}{\partial\hook f}$ for $i \geq 1$, so the equality
    is proved. It is also clear that $J \subseteq \DmmS \DT$ and $\annn{\DT}{g} = J + (\Dx^d -
    \partial)\DT$, so that indeed we obtain a ray decomposition.
\end{proof}

\begin{remark}\label{ref:Hilbfunccouting:rmk}
    It is not hard to compute the Hilbert function of the apolar algebra of
    a ray sum in some special cases. We mention one
    such case below.
    Let $f\in \DP$ be a polynomial satisfying $f_2 = f_1 = f_0 = 0$ and
    $\partial\in \DmmS^2$ be such that  $\partial \hook f = \ell$ is a linear
    form, so that $\partial^2\hook f = 0$. Let $A = \Apolar{f}$ and $B =
    \Apolar{f + x^2\ell}$. The only different values of $H_A$ and $H_B$
    are $H_B(m) = H_A(m) + 1$ for $m=1, 2$. The $f_2 = f_1 = f_0 = 0$
    assumption is needed to ensure that the degrees of $\partial\hook f$ and
    $\partial\hook (f + x^2\ell)$ are equal for all $\partial$ not
    annihilating $f$.
\end{remark}

\subsection{Flatness of ray families}

\begin{prop}\label{ref:raysumflatness:prop}
    \def\DTpoly{\DT_{poly}}%
    \def\DJpoly{J_{poly}}%
    Let $g$ be the $d$-th ray sum with respect to $f$ and $\partial$. Then the
    corresponding upper and lower ray families are flat. Recall, that these
    families are explicitly given as
    \begin{equation}\label{eq:upperrayfamily}
        k[t] \to \frac{\DTpoly[t]}{\DJpoly[t] + (\Dx^{d} - t\Dx^{d-1} -
    \partial)\DTpoly[t]}\quad
        \mbox{ (upper ray family),}
    \end{equation}
    \begin{equation}\label{eq:lowerrayfamily}
        k[t] \to \frac{\DTpoly[t]}{\DJpoly[t] + (\Dx^{d} -t\Dx - \partial)\DTpoly[t]}\quad\quad
        \mbox{ (lower ray family),}
    \end{equation}
    where $\DTpoly$ is the fixed polynomial subring of $\DT$.
\end{prop}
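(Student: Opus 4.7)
The plan is to apply the flatness criterion of Proposition~\ref{ref:flatelementary:prop} to the ideal $\widetilde{I}$ cutting out the (upper or lower) ray family inside $\DSpoly[\alpha][t]$; set $\widetilde{I}_{0} := \widetilde{I}\cap \DSpoly[\alpha]$ and fix $\lambda\in k$. Invoking Remark~\ref{ref:flatnessremovet:remark}, it is enough to test the required inclusion on elements
\[
    s = j + c\bigl(\alpha^{d} - t\alpha^{\epsilon} - \partial\bigr),\qquad j\in \DJpoly,\ c\in \DSpoly[\alpha],
\]
where $\epsilon = d-1$ for the upper family and $\epsilon = 1$ for the lower family, and to show that $s\in (t-\lambda)$ implies $s\in (t-\lambda)\widetilde{I} + \widetilde{I}_{0}[t]$. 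Specialising at $t=\lambda$, the assumption $s\in (t-\lambda)$ becomes $c(\alpha^{d} - q) \in \DJpoly$ in $\DSpoly[\alpha]$ for $q := \lambda\alpha^{\epsilon} + \partial$.

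Next I would apply Remark~\ref{ref:decompositionhomog:rmk} with $B = \DSpoly$, $R = \DSpoly[\alpha]$, and $I = \DJpoly$; verifying its hypothesis is the main step. Using the inclusion $\Dan{f}\subseteq \Dan{\partial\hook f}$ (which is immediate since $\Dan{f}$ is an ideal), the graded ideal $\DJpoly$ decomposes by $\alpha$-degree into $\Dan{f}\cap \DSpoly$ in degree zero and $\Dan{\partial\hook f}\cap \DSpoly$ in every positive degree. Consequently $b\alpha^{d}\in \DJpoly$ for $b\in \DSpoly$ forces $b\in \Dan{\partial\hook f}\cap \DSpoly$; then both $b\alpha^{\epsilon}\in \DJpoly$ (because $\epsilon\geq 1$) and $b\partial\in \Dan{f}\cap \DSpoly$ (via $b\hook(\partial\hook f) = (b\partial)\hook f = 0$), so $bq\in \DJpoly$, as required.

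Remark~\ref{ref:decompositionhomog:rmk} then delivers $c\alpha^{d}\in \DJpoly$ together with $cq\in \DJpoly$. Writing $c = \sum_{i\geq 0} c_{i}\alpha^{i}$ with $c_{i}\in \DSpoly$, the relation $c\alpha^{d}\in \DJpoly$ forces every $c_{i}$ into $\Dan{\partial\hook f}\cap \DSpoly$, hence $c_{i}\alpha^{i+\epsilon}\in \DJpoly$ for each $i\geq 0$ (since $i+\epsilon\geq 1$), and therefore $c\alpha^{\epsilon}\in \DJpoly\subseteq \widetilde{I}_{0}$. Rewriting
\[
    s = \bigl[j + c(\alpha^{d} - \lambda\alpha^{\epsilon} - \partial)\bigr] - (t-\lambda)\,c\alpha^{\epsilon} = -(t-\lambda)\,c\alpha^{\epsilon} \in (t-\lambda)\widetilde{I}
\]
concludes the verification. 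The main obstacle is precisely the hypothesis check for Remark~\ref{ref:decompositionhomog:rmk}: once the $\alpha$-graded pieces of $\DJpoly$ are identified via $\Dan{f}\subseteq \Dan{\partial\hook f}$, the rest is formal bookkeeping, and the argument applies uniformly for every $\lambda\in k$ (including $\lambda=0$) and for both choices $\epsilon\in\{1,d-1\}$, simultaneously settling the flatness of the upper and the lower ray families.
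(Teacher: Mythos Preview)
Your proof is correct and follows essentially the same route as the paper's: reduce via Proposition~\ref{ref:flatelementary:prop} and Remark~\ref{ref:flatnessremovet:remark}, then apply Remark~\ref{ref:decompositionhomog:rmk} to the $\alpha$-graded ideal $J_{poly}$, using that its degree-zero piece is $\Dan{f}\cap\DSpoly$ and its positive-degree pieces are $\Dan{\partial\hook f}\cap\DSpoly$. Your introduction of the parameter $\epsilon\in\{1,d-1\}$ to handle both families at once is a tidy presentational improvement over the paper, which treats the lower family and leaves the upper one to the reader.
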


\begin{proof}
    \def\DTpoly{\DT_{poly}}%
    We start by proving the flatness of
        Family~\eqref{eq:lowerrayfamily}.

    We want to use Proposition~\ref{ref:flatelementary:prop}. To simplify
    notation let $J := J_{poly}$.
    Denote by $\DPut{tmpII}{\mathfrak{I}}$ the ideal defining the family and
    suppose that some $z\in \DtmpII$ lies in $(t-\lambda)$ for some $\lambda\in k$.
    Write $z$ as $i + i_2\DPut{spec}{\pp{\Dx^{d} - t\Dx -
\partial}}$, where $i\in J[t]$, $i_2\in \DTpoly[t]$, and note that by
Remark~\ref{ref:flatnessremovet:remark} we may assume $i\in J$, $i_2\in
\DTpoly$. Since $z\in (t-\lambda)$, we have that
$i + i_2\DPut{spec}{(\Dx^d - \lambda\Dx - \partial)} = 0$, so
\[
    i_2\Dspec = -i\in J.
\]
    By Proposition~\ref{ref:raysumideal:prop} the ideal $J$ is homogeneous with
respect to grading by $\Dx$. More precisely it is equal to $J_0 + J_1\Dx$,
where $J_0 = \annn{\DS}{f}\DT,\ J_1 = \annn{\DS}{\partial\hook f}\DT$ are generated by elements not containing $\Dx$, so that $J$
is generated by elements of $\alpha$-degree at most one. We now check the
assumptions of Lemma~\ref{ref:decompositionhomog:lem}. Note that $\partial J
\subseteq J_0$ by definition of $J$. If $r\in \DTpoly$ is
such that $r\Dx^d\in J$, then $r\in J_1$, so that $r(\lambda\Dx + \partial)\in
\Dx J_1 + J_0 \subseteq J$. Therefore the assumptions are
satisfied and the Lemma shows that
    $i_2\Dx^d\in J$. Then $i_2\Dx\in J$, thus
    $i_2(\Dx^d - t\Dx)\in J[t] \subseteq(\DtmpII \cap \DTpoly)[t]$. Since $i_2 \partial\in
    \DtmpII \cap \DTpoly$ by definition, this
    implies that $i + i_2(\Dx^d - t\Dx - \partial)\in J[t] \subseteq (\DtmpII\cap
    \DTpoly)[t]$. Now the flatness follows from
    Proposition~\ref{ref:flatelementary:prop}.

    The same proof works equally well for upper ray family: one should just
    replace $\Dx$  by $\Dx^{d-1}$ in appropriate places of the proof. For this reason we
    leave the case of Family~\eqref{eq:upperrayfamily} to the reader.
\end{proof}

\begin{prop}\label{ref:fibersofray:prop}
    Let us keep the notation of Proposition \ref{ref:raysumflatness:prop}.
    Let $\lambda\in k\setminus\left\{ 0 \right\}$.
    The fibers of the Family \eqref{eq:upperrayfamily} and Family
    \eqref{eq:lowerrayfamily} over $t-\lambda$ are
    reducible.

    Suppose that $\partial^2\hook f = 0$ and the characteristic of $k$ does
    not divide $d-1$. The fiber of the Family \eqref{eq:lowerrayfamily} over $t-\lambda$ is
    isomorphic to \[\Spec \Apolar{f} \sqcup \left(\Spec \Apolar{\partial
    f}\right)^{\sqcup d-1}.\]
\end{prop}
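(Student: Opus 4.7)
The plan is to analyse the fiber ring $R := T_{poly}/(J_{poly} + p(\Dx))$ at $t=\lambda$ via its Artinian decomposition, where $p(\Dx) = \Dx^d - \lambda\Dx - \partial$ for the lower family and $p(\Dx) = \Dx^d - \lambda\Dx^{d-1} - \partial$ for the upper family, and $J$ is the ideal described in Proposition~\ref{ref:raysumideal:prop}. For reducibility of both families, further quotient $R$ by $(\Dx_1,\ldots,\Dx_n)$ to obtain $k[\Dx]/\bar{p}(\Dx)$, where $\bar{p}$ is either $\Dx(\Dx^{d-1} - \lambda)$ or $\Dx^{d-1}(\Dx - \lambda)$. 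In each case $\bar{p}$ factors as a product of coprime polynomials (using $\lambda \neq 0$), so the quotient is a non-trivial product of local rings by CRT. A local ring cannot surject onto a non-local ring, so $R$ itself is not local, and the fiber is reducible.

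For the isomorphism type of the lower fiber under the additional hypotheses $\partial^2\hook f = 0$ and $\operatorname{char} k \nmid d-1$, apply Hensel's lemma to the monic polynomial $p(\Dx)\in \DS[\Dx]$. Its reduction modulo $\DmmS$ is $\Dx(\Dx^{d-1}-\lambda) = \prod_{i=0}^{d-1}(\Dx - \mu_i)$, where $\mu_0 = 0$ and $\mu_1,\ldots,\mu_{d-1}\in k$ are the distinct $(d-1)$-th roots of $\lambda$ (distinctness uses $\operatorname{char} k \nmid d-1$). Since $\DS$ is complete local, Hensel lifts this to $p(\Dx) = \prod_{i=0}^{d-1}(\Dx - q_i)$ in $\DS[\Dx]$ with pairwise coprime linear factors and $q_i\equiv \mu_i \pmod{\DmmS}$. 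By CRT, $\DS[\Dx]/p(\Dx)\cong \prod_{i=0}^{d-1}\DS$ via $\Dx\mapsto q_i$. Since $R$ is Artinian, passing to the completions at its maximal ideals is harmless, and $R$ decomposes as $\prod_{i=0}^{d-1}\DS/J_i$, where $J_i$ is the image in $\DS$ of $J = \annn{\DS}{f}\DS[\Dx] + \Dx\annn{\DS}{\partial\hook f}\DS[\Dx]$ under $\Dx\mapsto q_i$.

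Computing each $J_i$ rests on two identities that follow from associativity of the contraction action: (a) $\annn{\DS}{f}\subseteq \annn{\DS}{\partial\hook f}$, and (b) $\partial\cdot\annn{\DS}{\partial\hook f}\subseteq \annn{\DS}{f}$. At $i = 0$, the equation $p(q_0) = 0$ rewrites as $q_0(q_0^{d-1} - \lambda) = \partial$; since $q_0^{d-1} - \lambda \equiv -\lambda \pmod{\DmmS}$ is a unit in $\DS$, we obtain $q_0 = u\partial$ for some unit $u\in\DS$. Then $q_0\annn{\DS}{\partial\hook f} = u\cdot\partial\annn{\DS}{\partial\hook f}\subseteq \annn{\DS}{f}$ by (b), so $J_0 = \annn{\DS}{f}$ and the corresponding factor is $\Apolar{f}$. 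At $i \geq 1$, $q_i$ is a unit in $\DS$ (residue $\mu_i \neq 0$), so $q_i\annn{\DS}{\partial\hook f} = \annn{\DS}{\partial\hook f}$; combined with (a) this yields $J_i = \annn{\DS}{\partial\hook f}$ and the factor is $\Apolar{\partial\hook f}$. A length cross-check using the description of $\Apolar{g}$ implicit in Proposition~\ref{ref:raysumideal:prop} (where the hypothesis $\partial^2\hook f = 0$ keeps the ray sum finite and simplifies the counting) together with flatness (Proposition~\ref{ref:raysumflatness:prop}) confirms that no hidden summands arise.

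The main obstacle is the transition between the polynomial fiber $T_{poly}/(J_{poly} + p(\Dx))$ and the formal power series setting in which Hensel's lemma directly applies; this is cosmetic because the fiber is Artinian, but should be articulated cleanly. A subsidiary point is verifying that the ideal $J$ really splits cleanly under the Hensel-CRT decomposition into the claimed summands, which is precisely what the identities (a) and (b), together with the divisibility $q_0\in \partial\DS$, guarantee.
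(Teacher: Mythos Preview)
Your proof is correct and takes a genuinely different route from the paper. The paper argues by explicit localisation: it first notes that $\alpha\partial\in J$ (this is where the hypothesis $\partial^2\hook f=0$ enters), deduces $\alpha^{d+1}-\lambda\alpha^2$ lies in the fiber ideal, and then inverts the appropriate factor near each support point to identify the local ring there with $\Apolar{f}$ or $\Apolar{\partial\hook f}$. Your argument instead lifts the factorisation of $\bar p(\alpha)$ to $S$ via Hensel and splits the whole fiber ring at once by the Chinese Remainder Theorem, then reads off each factor using the two containments $\annn{\DS}{f}\subseteq\annn{\DS}{\partial\hook f}$ and $\partial\cdot\annn{\DS}{\partial\hook f}\subseteq\annn{\DS}{f}$ together with $q_0\in\partial\,S$.

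Two remarks. First, your approach actually does \emph{not} use $\partial^2\hook f=0$: the identities (a), (b) and the relation $q_0=u\partial$ hold unconditionally, so your argument establishes the fiber description under the sole hypothesis $\operatorname{char} k\nmid d-1$. The length cross-check you mention is therefore redundant (the CRT decomposition already accounts for all of $R$), and the hypothesis $\partial^2\hook f=0$ is an artefact of the paper's localisation method, where it is used to force $\alpha\partial\in J$. Second, the passage from $T_{poly}$ to the complete setting is best phrased by observing that $J_{poly}$ contains a power of $(\Dx_1,\ldots,\Dx_n)$, so one may work over the Artinian local ring $S_{poly}/\mathfrak n^{N}=S/\DmmS^{N}$, to which Hensel applies directly; this removes the slight vagueness in ``passing to completions is harmless''. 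Both proofs tacitly assume $\partial\hook f\neq 0$, since otherwise $\alpha\in J$ and the fiber is local.
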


\begin{proof}
    For both families the support of the fiber over $t - \lambda$ contains the
    origin. The support of the fiber of Family \eqref{eq:upperrayfamily} contains
    furthermore a point with $\alpha = \lambda$ and other coordinates equal to
    zero. The support of the fiber of Family \eqref{eq:lowerrayfamily} contains a
    point with $\alpha = \omega$, where $\omega^{d-1} = \lambda$.

    Now let us concentrate on Family \eqref{eq:lowerrayfamily} and on the case
    $\partial^2\hook f = 0$.
    The support of the fiber over $t-\lambda$ is $(0,\dots,0,0)$ and
    $(0, \dots, 0, \omega)$, where $\omega^{d-1} = \lambda$ are $(d-1)$-th roots of
    $\lambda$, which are pairwise different because of the characteristic assumption.
    We will analyse the support point by point.
    By hypothesis $\partial\in \Dan{\partial\hook f}$, so that $\alpha\cdot \partial\in J$, thus $\alpha^{d+1} -
    \lambda\cdot \alpha^2$ is in the ideal $I$ of the fiber over $t = \lambda$.

    \def\DTpoly{\DT_{poly}}%
    Near $(0,0,\dots,0)$ the element $\alpha^{d-1} - \lambda$ is invertible, so
    $\alpha^2$ is in the localisation of the ideal $I$, thus $\alpha + \lambda^{-1}\partial$ is in the
    ideal. Now we check that the localisation of $I$ is equal to $\Dan{f} +
    (\alpha +
    \lambda^{-1}\partial)\DTpoly$. Explicitly, one should check that
    \[
        \pp{\Dan{f} + (\alpha + \lambda^{-1}\partial)\DTpoly}_{(0, \ldots ,0)}
        = \pp{\Dan{f} + (\Dx^{d}
    -\lambda\Dx - \partial)\DTpoly}_{(0,  \ldots ,0)}.
    \]
    Then the stalk of the fiber at $(0,
    \ldots , 0)$ is isomorphic to $\Spec \Apolar{f}$.

    Near $(0, 0,\dots, 0, \omega)$ the elements $\alpha$ and
    $\frac{\alpha^{k+1} - \lambda\cdot \alpha^2}{\alpha - \omega}$
    are invertible, so
    $\Dan{\partial\hook f}$ and $\alpha - \omega$ are in the localisation of
    $I$. This, along with
    the other inclusion, proves
    that this localisation  is generated by $\Dan{\partial\hook f}$ and $\alpha -
    \omega$ and thus the stalk of the fiber is isomorphic to $\Spec \Apolar{\partial f}$.
\end{proof}

We make the most important corollary explicit:
\begin{cor}\label{ref:smoothabilityofrayiff:cor}
    We keep the notation of Proposition \ref{ref:raysumflatness:prop}. Suppose
    that $\kchar$ does not divide $d-1$ and $\partial^2\hook f = 0$.
    If both apolar algebras of $f$ and $\partial\hook f$ are smoothable then
    also the apolar algebra of every ray sum of $f$ with respect to $\partial$
    is smoothable.\qed
\end{cor}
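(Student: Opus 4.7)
The plan is to deduce the corollary directly from the two preceding propositions, using the lower ray family as an explicit degeneration that connects $\Apolar{g}$ to a disjoint union of the two algebras we already know to be smoothable.

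First, I would invoke Proposition \ref{ref:raysumflatness:prop} to ensure that the lower ray family \eqref{eq:lowerrayfamily} associated to the ray sum $g$ is flat over $k[t]$. Next, I would identify its fibers: the special fiber at $t=0$ is obtained by setting $t=0$ in the defining ideal, which gives exactly $\DJpoly + (\Dx^d - \partial)\DTpoly$; by Proposition \ref{ref:raysumideal:prop} this ideal cuts out $\Spec \Apolar{g}$. For the general fiber over $t = \lambda$ with $\lambda \neq 0$, the hypotheses $\partial^2\hook f = 0$ and $\kchar \nmid d-1$ let me apply the second part of Proposition \ref{ref:fibersofray:prop}, giving the explicit decomposition
\[
\Spec \Apolar{f} \ \sqcup \ \bigl(\Spec \Apolar{\partial\hook f}\bigr)^{\sqcup (d-1)}.
\]

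By assumption both $\Apolar{f}$ and $\Apolar{\partial\hook f}$ are smoothable, and by the remarks in Subsection \ref{sss:smoothability} a finite product of smoothable local algebras is smoothable (concretely, choose independent smoothings of each factor and take their product over the appropriate base). Thus the general fiber of the lower ray family is smoothable: there exists a finite flat family with irreducible base whose special fiber is this disjoint union and whose general fiber is smooth.

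Finally, I would compose this smoothing of the general fiber with the lower ray family itself to obtain a smoothing of $\Apolar{g}$. Formally, one can either pass to the corresponding morphism to the Hilbert scheme and use that the smoothable component is closed, so that $[\Spec\Apolar{g}]$ lies in it as a limit of points that do, or equivalently perform a base change of the smoothing family and glue as in \cite{CEVV}. There is no real obstacle here: the key content was already packed into Propositions \ref{ref:raysumflatness:prop} and \ref{ref:fibersofray:prop}, and the only thing to verify carefully is the transitivity statement ``a flat limit of smoothable algebras is smoothable'', which is standard.
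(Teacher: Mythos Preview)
Your proposal is correct and matches the paper's intended argument: the corollary is marked with \qed\ precisely because it is an immediate consequence of Propositions~\ref{ref:raysumflatness:prop} and~\ref{ref:fibersofray:prop}, together with the fact that products of smoothable algebras are smoothable and that the smoothable locus is closed. You have simply spelled out what the paper leaves implicit.
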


\begin{example}\label{ref:squareadding:example}
    Let $f\in k[x_1, \ldots ,x_n]$ be a dual socle generator of an algebra
    $A$. Then the algebra $B = \Apolar{f + x_{n+1}^{2}}$ is limit-reducible: it
    is a limit of algebras of the form $A \times k$. In particular, if $A$ is
    smoothable, then $B$ is also smoothable.

    Combining this with Proposition~\ref{ref:squares:prop}, we see that every
    local Gorenstein algebra $A$ of socle degree $s$ with $\Dhdvect{A, s-2} = (0, q,
    0)$, where $q\neq 0$, is limit-reducible.

    If $\deg f \geq 2$, then the Hilbert functions of $A = \Apolar{f}$ and $B = \Apolar{f + x_{n+1}^2}$
    are related by $H_{B}(m) = H_A(m)$ for $m\neq 1$ and $H_B(1) = H_A(1) +
    1$.
\end{example}

Above, we took advantage of the explicit form of ray decompositions coming
from ray sums to analyse the resulting ray families in depth. In
Proposition~\ref{ref:stretchedhavedegenerations:prop} below we prove
the flatness of the upper ray family without such knowledge. The price paid
for this is the fact that we get no information about the fibers of this
family.

\begin{prop}\label{ref:stretchedhavedegenerations:prop}
    Let $f = x_1^\Ddegf  + g\in \DP$ be a polynomial of degree $\Ddegf $ such that
    $\Dx_1^{c} \hook g = 0$ for some $c$ satisfying $2c\leq \Ddegf $. Then
    any ray decomposition $\Dan{f} = (\Dx_1^{\Dord} - q) + J$, where $J = \Dan{f}
    \cap (\Dx_2, \ldots ,\Dx_n)$,
    gives rise to an upper ray degeneration. In particular $\Apolar{f}$ is
    limit-reducible.
\end{prop}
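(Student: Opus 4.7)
The plan is to verify flatness of the upper ray family via Proposition~\ref{ref:flatelementary:prop}. Denote by $\mathfrak{I} := J_{poly}[t] + (\Dx_1^\nu - t\Dx_1^{\nu-1} - q)\DSpoly[t]$ its defining ideal and set $\mathfrak{I}_0 := \mathfrak{I}\cap \DSpoly$. Fix $\lambda\in k$. By Remark~\ref{ref:flatnessremovet:remark}, it suffices to treat an element of the form $u = j + r(\Dx_1^\nu - t\Dx_1^{\nu-1} - q)$ with $j\in J_{poly}$ and $r\in \DSpoly$: if $u\in (t-\lambda)\DSpoly[t]$, then evaluating at $t=\lambda$ gives $j = -r(\Dx_1^\nu - \lambda\Dx_1^{\nu-1} - q)$ and hence
\[
    u = -(t-\lambda)\,r\,\Dx_1^{\nu-1}.
\]
The task thus reduces to proving $r\Dx_1^{\nu-1}\in J_{poly}\subseteq \mathfrak{I}$, which will yield $u\in (t-\lambda)\mathfrak{I}$.

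Reducing the relation $j = -r(\Dx_1^\nu - \lambda\Dx_1^{\nu-1} - q)$ modulo $\Dcomp{1} = (\Dx_2,\ldots,\Dx_n)$ and using $j,q\in \Dcomp{1}$ shows that the pure-$\Dx_1$ component of $r$ is killed by the nonzero polynomial $\Dx_1^{\nu-1}(\Dx_1 - \lambda)$ and hence vanishes, so $r\in \Dcomp{1}$. The key step, and the main obstacle, is the inequality $\nu\geq c+1$. To prove it I would work with the element $\sigma := \Dx_1^\nu - q\in \Dan{f}$ provided by the ray decomposition: assuming $\nu\leq c$, I would extract the coefficient of $x_1^{s-\nu}$ in $\sigma\hook f = 0$. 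The contraction $\Dx_1^\nu\hook x_1^s = x_1^{s-\nu}$ contributes $1$, while $\Dx_1^\nu\hook g$ has $x_1$-degree at most $c-\nu-1$ (since $\deg_{x_1}g<c$ by the hypothesis $\Dx_1^c\hook g=0$) and contributes nothing; moreover $q\hook f = q\hook g$ has $x_1$-degree at most $c-1$, and the hypothesis $2c\leq s$ guarantees $c - 1 < s - \nu$ for $\nu\leq c\leq s-c$, so this term contributes nothing either. The coefficient of $x_1^{s-\nu}$ in $\sigma\hook f$ is therefore $1$, contradicting $\sigma\hook f = 0$.

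Once $\nu\geq c+1$, we have $\Dx_1^{\nu-1}\hook g = 0$, so $\Dx_1^{\nu-1}\hook f$ lies in $k[x_1]$ (it is either $0$ or $x_1^{s-\nu+1}$). Since $r\in \Dcomp{1}$ annihilates every element of $k[x_1]$ under contraction, $(r\Dx_1^{\nu-1})\hook f = 0$, which combined with $r\Dx_1^{\nu-1}\in \Dcomp{1}$ yields $r\Dx_1^{\nu-1}\in \Dan{f}\cap \Dcomp{1} = J$; being polynomial, it lies in $J_{poly}$. This finishes the flatness check. The limit-reducibility of $\Apolar{f}$ then follows from Remark~\ref{ref:fibers:rmk}, since for $\lambda\neq 0$ the fiber of the family is supported at least at both the origin and at $(\lambda,0,\ldots,0)$: both lie on the $\Dx_1$-axis, where $q$ vanishes (as $q\in \Dcomp{1}$) and the equation $\Dx_1^\nu - \lambda\Dx_1^{\nu-1} - q$ reduces to $\Dx_1^{\nu-1}(\Dx_1 - \lambda) = 0$.
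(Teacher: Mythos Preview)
Your proof is correct and follows essentially the same approach as the paper's: both reduce via Remark~\ref{ref:flatnessremovet:remark}, show $r\in\Dcomp{1}$, establish the key inequality $\nu\geq c+1$ (the paper does this by computing $\Dx_1^{s-\nu}\hook(q\hook g)=1$, you by a coefficient-of-$x_1^{s-\nu}$ contradiction, but these are the same observation), and then use $\Dx_1^{\nu-1}\hook g=0$ to land in $J$. The only cosmetic difference is that you compute $u=-(t-\lambda)r\Dx_1^{\nu-1}$ directly and place it in $(t-\lambda)\mathfrak{I}$, whereas the paper proves the slightly stronger inclusion $\gamma\cdot(\Dx_1^\nu - t\Dx_1^{\nu-1} - q)\in J[t]$ for all $\gamma\in\Dcomp{1}$ and places the whole element in $\mathfrak{I}_0[t]$; either conclusion feeds Proposition~\ref{ref:flatelementary:prop}.
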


\begin{proof}
    Let $\DPut{tmpII}{\mathfrak{I}} := \DPut{spec}{(\Dx_1^{\Dord} -
    t\Dx_{1}^{\Dord-1}
- q)} + J$ be the ideal defining the
    ray family and recall that $q, J \subseteq \Dcomp{1}$, where $\Dcomp{1} =
    (\Dx_2,\dots, \Dx_n)$.

    Since $\Dx_1^\Dord - q\in \Dan{f}$, we have $q\hook g = q\hook f =
    \Dx_1^{\Dord}\hook f = x_1^{\Ddegf -\Dord} + \Dx_1^\Dord \hook g$. Then
    $\Dx_1^{\Ddegf -\Dord}(q\hook
    g) = \Dx_1^{\Ddegf -\Dord}\hook x_1^{\Ddegf -\Dord} + \Dx_1^\Ddegf \hook g = 1$, thus
    $\Dx_1^{\Ddegf -\Dord}\hook g\neq 0$.
    It follows that $\Ddegf  - \Dord \leq c-1$, so $\Dord - 1\geq \Ddegf  - c\geq c$, thus
    $\Dx_{1}^{\Dord -1} \hook g = 0$. For all $\gamma\in \Dcomp{1}$, we claim that
    \begin{equation}\label{eq:contstretched}
        \gamma\cdot (\Dx_1^{\Dord } - t\Dx_1^{\Dord -1} - q)\in J[t].
    \end{equation}
    Note that $(\Dx_1^\Dord  - q) \hook f = 0$ and $\Dx_1^{\Dord -1}\gamma\hook f =
    \Dx_1^{\Dord -1}\gamma\hook g = 0$. This means that $\Dx_1^{\Dord -1}\gamma\in J$. Since always $(\Dx_1^{\Dord } - q)\gamma\in J$,
    we have proved \eqref{eq:contstretched}.

    Let $\DtmpII \subseteq \DS_{poly}[t]$ be the ideal
    defining the upper ray family. Take any $\lambda\in k$ and an element
    $i\in \DtmpII \cap (t-\lambda)$. We will prove that
    $i\in\DtmpII(t-\lambda) + \DPut{tmpzero}{\DtmpII_0}[t]$, where $\Dtmpzero = \DtmpII \cap
    \DS$, then Proposition~\ref{ref:flatelementary:prop} asserts that
    $\DS[t]/\DtmpII$ is flat. Write $i = i_1 + i_2\Dspec$. As before, we may
    assume $i_1\in J$, $i_2\in \DS$. Since $i\in (t-\lambda)$, we have
    $i_1 + i_2(\Dx_1^\Dord  - \lambda\Dx_1^{\Dord -1} - q) = 0$.
    Since $i_1\in \Dcomp{1}$, we also have $i_2\in \Dcomp{1}$. But then by
    Inclusion~\eqref{eq:contstretched} we have $i_2\Dspec \subseteq
    \Dtmpzero[t]$. Since clearly $i_1\in J \subseteq \Dtmpzero[t]$, the
    assumptions of Proposition~\ref{ref:flatelementary:prop} are satisfied,
    thus the upper ray family is flat.

    Now, Remark~\ref{ref:fibers:rmk} shows that a general fiber of the upper
    ray degeneration is reducible, thus $\Apolar{f}$ is a flat limit of
    reducible algebras, i.e.~limit-reducible.
\end{proof}

\begin{example}\label{ref:quarticlimitreducible:example}
    Let $f\in k[x_1, x_2, x_3, x_4]$ be a polynomial of degree $4$. Suppose
    that the leading form $f_4$ of $f$ can be written as $f_4 = x_1^4 + g_4$
    where $g_4\in k[x_2, x_3, x_4]$. We will prove that $\Apolar{f}$ is
    limit-reducible.
    By Example~\ref{ref:topdegreeexample:ex} we may
    assume that $f = x_1^4 + g$, where $\Dx_1^2\hook g = 0$. By
    Proposition~\ref{ref:stretchedhavedegenerations:prop} we see that
    $\Apolar{f}$ is limit-reducible.
\end{example}

\begin{example}\label{ref:stretched:example}
    Suppose that an Artin local Gorenstein algebra $A$ has Hilbert function
    $H_A = (1, H_1,\dots, H_c, 1,\dots, 1)$ and socle degree $\Ddegf  \geq 2c$.
    By Example \ref{ref:standardformofstretched:ex} we
    may assume that $A  \simeq \Apolar{x_1^{\Ddegf } + g}$, where $\Dx_1^{c} \hook g =
    0$ and $\deg g\leq c+1$. Then by Proposition \ref{ref:stretchedhavedegenerations:prop} we
    obtain a flat degeneration
    \begin{equation}\label{eq:exampledegeneration}
        k[t] \to \frac{\DS[t]}{(\Dx_1^{\Dord } - t\Dx_1^{\Dord -1} - q) + J}.
    \end{equation}
    Thus $A$ is limit-reducible in the sense of Definition
    \ref{ref:limitreducible:def}.
    Let us take $\lambda\neq 0$.
    By Remark \ref{ref:fibers:rmk} the fiber over $t = \lambda$ is
    supported at $(0, 0,\dots, 0)$ and at $(\lambda, 0,\dots, 0)$ and
    the ideal defining this fiber near $(0, 0,\dots, 0)$ is $I_0 = (\lambda\Dx_1^{\Dord -1} - q)
    + J$.
    From the proof of \ref{ref:stretchedhavedegenerations:prop} it follows that
    $\Dx_1^{\Dord -1} \hook g = 0$. Then one can check that $I_0$ lies in the
    annihilator of $\lambda^{-1} x_{1}^{\Ddegf -1} + g$.
    Since $\sigma\hook (x_1^{\Ddegf} + g) = \sigma \hook(\lambda^{-1} x_1^{\Ddegf - 1} +
    g)$ for every $\sigma\in (\Dx_2, \ldots ,\Dx_n)$, one calculates that the
    apolar algebra of $\lambda^{-1} x_1^{\Ddegf -1} + g$ has Hilbert function
    $(1, H_1,\dots, H_c, 1,\dots, 1)$ and socle degree $\Ddegf -1$. Then
    $\dimk \Apolar{x_1^{\Ddegf -1} + g} = \dimk \Apolar{\lambda^{-1}x_1^{\Ddegf } + g} -
    1$. Thus the fiber is a union of a point and
    $\Spec\Apolar{\lambda^{-1}x_1^{\Ddegf } + g}$,
    i.e.~degeneration~\eqref{eq:exampledegeneration} peels one point off $A$.
\end{example}

\subsection{Tangent preserving ray
degenerations}\label{subsec:tangentpreserving}

A (finite) ray degeneration gives a morphism from $\Spec k[t]$ to the Hilbert
scheme, i.e. a curve on the Hilbert scheme $\Hilb{n}{}$. In this section we prove
that in some cases
the dimension of the tangent space to $\Hilb{n}{}$ is constant along this curve.
This enables us to prove that certain points of this scheme are
smooth without the need for lengthy computations.

This section seems to be the
most technical part of the paper, so we include even more examples. The most
important results here are Theorem \ref{ref:nonobstructedconds:thm}
together with Corollary \ref{ref:CIarenonobstructed:cor}; see examples below
Corollary \ref{ref:CIarenonobstructed:cor} for applications.

Recall (e.g.~\cite[Prop~4.10]{JelMSc} or \cite{cn09}) that the dimension of the tangent space to
$\Hilb{n}{}$ at a $k$-point corresponding to a Gorenstein scheme $\Spec S/I$ is
$\dimk S/I^2 - \dimk S/I$.

\begin{lem}\label{ref:tangentflatcondition:lem}
    Let $d\geq 2$. Let $g$ be the $d$-th ray sum of $f\in \DP$ with respect to
    $\partial\in
    \DS$ such that $\partial^2 \hook f = 0$.
    Denote $I := \annn{\DS}{f}$ and $J := \annn{\DS}{\partial\hook f}$.
    Take $\DPut{T}{T} = S[[\Dx]]$ to be the ring dual to $\DP[x]$ and let
    \[\DPut{II}{\mathfrak{I}} := \pp{I + J\Dx +
        \DPut{spec}{(\Dx^{d} - t\Dx - \partial)}}\cdot \DT[t]\] be the ideal in $\DT[t]$ defining the
    associated lower ray degeneration, see Proposition \ref{ref:raysumflatness:prop}.
    Then the family $k[t] \to \DT[t]/\DII^2$ is flat if and only if $(I^2 :
    \partial) \cap I \cap J^2 \subseteq I\cdot J$.
\end{lem}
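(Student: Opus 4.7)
My plan is to invoke the flatness criterion of Proposition~\ref{ref:flatelementary:prop} applied to the ideal $\DII^{2}\subseteq \DT[t]$; equivalently, to verify that $t-\lambda$ is a non-zero divisor on $\DT[t]/\DII^{2}$ for every $\lambda\in k$. For $\lambda\neq 0$ this follows from the structure of the fibres given in Proposition~\ref{ref:fibersofray:prop}: the fibre of $\DT[t]/\DII$ is a disjoint union whose combinatorial type is locally constant on $\mathbb{A}^{1}\setminus\{0\}$, so the dimension of the tangent space to the Hilbert scheme along the fibre is constant there and flatness of $\DT[t]/\DII^{2}$ away from $t=0$ comes for free. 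The genuine content of the lemma is therefore at $\lambda=0$.

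At $\lambda=0$, invoke Remark~\ref{ref:flatnessremovet:remark} to restrict attention to elements $r\in\DII^{2}$ written as $\DT$-combinations of the six canonical generators,
\[
r \;=\; a + b_{1}\Delta + b_{2}\Dx\Delta + c\Delta^{2},\qquad \Delta = \Dx^{d}-t\Dx-\partial,
\]
with $a\in I^{2}\DT + IJ\Dx\DT + J^{2}\Dx^{2}\DT$, $b_{1}\in I\DT$, $b_{2}\in J\DT$, $c\in\DT$. Setting $\Delta_{0}=\Dx^{d}-\partial$, the hypothesis $r\in(t)$ is the master identity
\[
a + b_{1}\Delta_{0} + b_{2}\Dx\Delta_{0} + c\Delta_{0}^{2} \;=\; 0\qquad(\star)
\]
in $\DT$, and $\Delta=\Delta_{0}-t\Dx$ gives the closed form
\[
r \;=\; -t\Dx\bigl(b_{1}+b_{2}\Dx+2c\Delta_{0}-tc\Dx\bigr).
\]
The non-zero divisor condition then amounts to showing that this bracketed factor (rewritten using $t\Dx=\Delta_{0}-\Delta$) lies in $\DII^{2}$ modulo $(\DII^{2}\cap\DT)[t]$.

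The identity $\partial^{2}\hook f=0$ provides the key ring-theoretic relations $I\subseteq J$, $\partial\in J$, $\partial^{2}\in I$, and most importantly $\partial J\subseteq I$. Together with Proposition~\ref{ref:raysumideal:prop}, these let me analyse $(\star)$ coefficient-by-coefficient in the $\Dx$-grading and absorb almost every summand of $r/t$ into $\DII^{2}$. A single residual obstruction survives: an element $x\in\DS$ extracted from a specific coefficient of $c$, which $(\star)$ forces simultaneously into $I$ (from the ``$a$''-side), into $J^{2}$ (from the ``$c\Delta_{0}^{2}$''-side), and whose image $\partial x$ lies in $I^{2}$ (from the cross terms). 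Representing this residual term as an element of $(\DII^{2}\cap\DT)[t]$ is possible precisely when $x\in IJ$, yielding the asserted equivalence $(I^{2}:\partial)\cap I\cap J^{2}\subseteq IJ$. For the converse, any $x$ in the intersection and outside $IJ$ can be bootstrapped to an explicit $r$ witnessing the failure of the criterion. The main technical difficulty is the coefficient-wise bookkeeping that isolates $x$; the identities $\partial J\subseteq I$ and $\partial^{2}\in I$ are precisely what make $(I^{2}:\partial)\cap I\cap J^{2}$ the natural receptacle for this obstruction.
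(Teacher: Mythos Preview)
Your overall strategy is correct and matches the paper's: invoke Proposition~\ref{ref:flatelementary:prop} and analyse elements of $\DII^{2}\cap(t-\lambda)$. Your shortcut for $\lambda\neq 0$ via constancy of the fibre tangent dimension is a legitimate simplification that the paper does not make (it treats all $\lambda$ uniformly).

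However, the heart of the argument at $\lambda=0$ is not carried out, and your description of it contains a misidentification. You claim the residual obstruction $x$ is ``extracted from a specific coefficient of $c$''. In the paper's reduction the $c$--term (their $i_{3}$) is eliminated \emph{first}: one shows $i_{3}\in J\DT$ and absorbs $i_{3}\sigma$ into the $\Delta$--coefficient, reducing to $i_{3}=0$. Only after this does the genuine obstruction appear, and it is the $\alpha$--constant part $j_{2}\in\DS$ of $i_{2}$ (your $b_{1}$, not $c$). The three membership constraints then arise as follows: $j_{2}\in I$ because $i_{2}\in\mathcal{J}$ forces its $\alpha$--free part into $I$; $j_{2}\in J^{2}$ from the top $\alpha$--degree coefficient of the identity $i_{1}+j_{2}\sigma=0$ (since $i_{1}\in\mathcal{J}^{2}$ has leading coefficient in $J^{2}$); and $j_{2}\partial\in I^{2}$ from the $\alpha$--free coefficient of the same identity. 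The required step is then that $j_{2}\in IJ$ gives $j_{2}\alpha\in\mathcal{J}^{2}$, whence $j_{2}\Delta\in\mathcal{J}^{2}[t]\subseteq\DII_{0}[t]$.

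Your ``closed form'' $r/t=-\alpha(b_{1}+b_{2}\alpha+2c\Delta_{0}-tc\alpha)$ is correct algebra but does not by itself isolate the obstruction: you would still need to show each summand, after suitable corrections, lies in $\DII^{2}$, and this requires the same sequence of reductions (first $c$, then the $\alpha$--part of $b_{1}+b_{2}\alpha$) that the paper performs. The phrase ``coefficient-wise bookkeeping'' hides exactly the work that constitutes the proof. You should carry out those two reductions explicitly and verify that the obstruction sits in $b_{1}$, not $c$.
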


\begin{proof}

    To prove flatness we will use
    Proposition~\ref{ref:flatelementary:prop}.
    Take an element $i\in \DII^2\cap (t-\lambda)$. We want to prove that $i\in
    \DII^2 (t-\lambda) + \DPut{zerocomp}{\DII_0[t]}$, where $\Dzerocomp =
    \DII^2 \cap \DT$. Let $\DPut{JJ}{\mathcal{J}} := (I + J\Dx)\DT$.
    Subtracting a suitable element of $\DII^2(t-\lambda)$ we
    may assume that
    \[i = i_1 + i_2\Dspec + i_3\Dspec^2,\] where $i_1\in
    \DJJ^2$, $i_2\in \DJJ$ and $i_3\in \DT$.
    We will in fact show that $i\in \DII^2(t-\lambda) + \DJJ^2[t]$.

    To simplify notation denote $\DPut{ss}{\sigma} = \Dx^d - \lambda\Dx -
    \partial$. Note that $J\Dss\subseteq \DJJ$.
    We have $i_1 + i_2\Dss + i_3\Dss^2 = 0$. Let $j_3 := i_3\Dss$.
    We want to apply Lemma~\ref{ref:decompositionhomog:lem}, below we check
    its assumptions.
    The ideal $\DJJ$ is homogeneous with
    respect to $\Dx$, generated in degrees less than $d$. Let $s\in
    \DT$ be an element satisfying $s\Dx^{d}\in \DJJ$.
    Then $s\in J$, which implies $s(\lambda \Dx + \partial)\in \DJJ$.
    By Lemma~\ref{ref:decompositionhomog:lem} and $i_3\Dss^2 = j_3\Dss\in \DJJ$ we
    obtain
    $j_3\Dx^{d} \in \DJJ$, i.e.~$i_3 \Dss \Dx^d\in \DJJ$.
    Applying the same
    argument to $i_3\Dx^d$ we obtain $i_3\Dx^{2d}\in \DJJ$, therefore $i_3\in J\DT$.
    Then
    \[
        i_3 \Dspec^2 - i_3 \Dss \Dspec = i_3\Dx (t - \lambda) \Dspec \in
        \DJJ(t-\lambda) \Dspec \subseteq \DII^2(t-\lambda).
    \]
    Subtracting this element from $i$ and substituting $i_2 := i_2 + i_3 \Dss$
    we may assume $i_3 = 0$.
    We obtain
    \begin{equation}\label{eq:flatnesslemma}
        0 = i_1 + i_2\Dss = i_1 + i_2(\Dx^d - \lambda\Dx - \partial).
    \end{equation}
    Let $i_2 = j_2 + v_2\Dx$, where $j_2\in \DS$, i.e.~it does not contain
    $\Dx$. Since $i_2\in \DJJ$, we have $j_2\in I$. As before, we have $v_2\Dx (\Dspec - \Dss) = v_2\Dx^2(t-\lambda)\in
    \DII^2(t-\lambda)$, so that we may assume $v_2 = 0$.

    Comparing the top $\Dx$-degree terms of \eqref{eq:flatnesslemma} we see
    that $j_2\in J^2$.
    Comparing the terms of
    \eqref{eq:flatnesslemma} not containing $\Dx$, we deduce that
    $j_2\partial\in I^2$, thus $j_2\in (I^2:\partial)$. Jointly, $j_2\in I\cap
    J^2\cap (I^2:\partial)$, thus $j_2\in IJ$ by assumption.
    But then $j_2\Dx \in \DJJ^2$, thus $j_2\Dspec\in \DJJ^2[t]$ and since
    $i_1\in \DJJ^2$, the
    element $i$ lies in $\DJJ^2[t] \subseteq \Dzerocomp$. Thus the assumptions of
    Proposition~\ref{ref:flatelementary:prop} are satisfied and the family
    $\DT[t]/\DII^2$ is flat over $k[t]$.

    The converse is easier: one takes $i_2\in I\cap
    J^2\cap (I^2:\partial)$ such that $i_2\not\in IJ$. On one hand, the element $j:=i_2(\Dx^d -
    \partial)$ lies in $\DJJ^2$ and we get that $i_2\Dspec - j = ti_2\Dx\in
    \DII^2$. On the other hand if $i_2\Dx\in \DII^2$, then $i_2\Dx\in (\DII^2 + (t))
    \cap \DT = (\DJJ + (\Dx^d - \partial))^2$, which is not the
    case.
\end{proof}

\begin{remark}\label{ref:tangentfibers:rmk}
    \def\tansp#1{\tan(#1)}%
    Let us keep the notation of Lemma~\ref{ref:tangentflatcondition:lem}. Fix
    $\lambda\in k\setminus \{0\}$ and suppose that the characteristic of
    $k$ does not divide $d-1$.
    The supports of the fibers of $\DS[t]/\DII$, $\DII/\DII^2$ and
    $\DS[t]/\DII^2$ over $t = \lambda$ are finite and equal.
    In particular from Proposition \ref{ref:fibersofray:prop} it follows that
    the dimension of the fiber of $\DII/\DII^2$ over $t-\lambda$ is equal to
    $\tansp{f} + (d-1)\tansp{\partial\hook f}$, where $\tansp{h} = \dimk \Dan{h}/\Dan{h}^2$ is the dimension of
    the tangent space to the point of the Hilbert scheme corresponding to
    $\Spec \DS/\Dan{h}$.
\end{remark}

\begin{thm}\label{ref:nonobstructedconds:thm}
    Suppose that a polynomial $f\in \DP$ corresponds to a smoothable, unobstructed  algebra
    $\Apolar{f}$. Let $\partial\in \DS$ be such that $\partial^2\hook f = 0$
    and the algebra $\Apolar{\partial\hook f}$ is smoothable and unobstructed.
    The following are equivalent:
    \begin{enumerate}
        \item[1.]\label{it:somenonob} the $d$-th ray sum of $f$ with respect to
            $\partial$ is unobstructed for some $d$ such that $2\leq d \leq
            \kchar$ (or $2\leq d$ if $\kchar = 0$).
        \item[1a.]\label{it:allnonob} the $d$-th ray sum of $f$ with respect to
            $\partial$ is unobstructed for all $d$ such that $2\leq d \leq \kchar
            $ (or $2\leq d$ if $\kchar = 0$).
        \item[2.]\label{it:tgflat} The $k[t]$-module  $\DPut{defid}{\DII}/\Ddefid^2$ is flat,
            where $\Ddefid$ is the ideal defining the lower ray family of the
            $d$-th ray sum for some $2\leq d \leq \kchar$ (or $2\leq d$ if
            $\kchar = 0$), see
            Definition \ref{ref:rayfamily:def}.
        \item[2a.]\label{it:tgflatevery} The $k[t]$-module $\DPut{defid}{\DII}/\Ddefid^2$ is flat,
            where $\Ddefid$ is the ideal defining the lower ray family of the
            $d$-th ray sum for every $2\leq d \leq  \kchar$ (or $2\leq d$ if
            $\kchar = 0$), see
            Definition \ref{ref:rayfamily:def}.
        \item[3.]\label{it:quoflat} The family $k[t] \to \DS[t]/\Ddefid^2$ is flat,
            where $\Ddefid$ is the ideal defining the lower ray family of the
            $d$-th ray sum for some $2\leq d \leq \kchar$ (or $2\leq d$ if
            $\kchar = 0$).
        \item[3a.]\label{it:quoflatevery} The family $k[t] \to \DS[t]/\Ddefid^2$ is flat,
            where $\Ddefid$ is the ideal defining the lower ray family of the
            $d$-th ray sum for every $2\leq d \leq \kchar$ (or $2\leq d$ if
            $\kchar = 0$).
        \item[4.] The following inclusion (equivalent to equality) of ideals in
            $\DS$
            holds: $I\cap J^2 \cap (I^2:\partial) \subseteq I\cdot J$, where
            $I = \annn{\DS}{f}$ and $J = \annn{\DS}{\partial\hook f}$.
    \end{enumerate}
\end{thm}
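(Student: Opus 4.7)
The plan is, for each fixed $d$ in the allowed range, to prove the cycle $(4)\Rightarrow(3)\Rightarrow(2)\Rightarrow(1)\Rightarrow(4)$, and then to observe that condition (4) does not involve $d$ at all, so that it automatically promotes each ``some $d$'' assertion to its ``every $d$'' counterpart. The equivalence (4) $\iff$ (3) is immediate from Lemma~\ref{ref:tangentflatcondition:lem}, whose hypothesis $\partial^2\hook f = 0$ holds by assumption.

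For (3) $\iff$ (2) I will use the short exact sequence
\[
0\to \mathfrak{I}/\mathfrak{I}^2 \to \DS[t]/\mathfrak{I}^2 \to \DS[t]/\mathfrak{I} \to 0.
\]
The rightmost term is flat over $k[t]$ by Proposition~\ref{ref:raysumflatness:prop}, so the long exact sequence of $\operatorname{Tor}_*^{k[t]}(-,M)$ yields a natural isomorphism $\operatorname{Tor}_1^{k[t]}(\DS[t]/\mathfrak{I}^2, M) \cong \operatorname{Tor}_1^{k[t]}(\mathfrak{I}/\mathfrak{I}^2, M)$ for every $k[t]$-module $M$; hence one of the two middle terms is flat over $k[t]$ if and only if the other is.

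The main conceptual step is (3) $\iff$ (1), linking flatness of the ``conormal'' module $\mathfrak{I}/\mathfrak{I}^2$ to unobstructedness of the ray sum. I embed all fibers of the ray family into a common affine space $\mathbb{A}^N$ (with $N$ large) and let $I_\lambda\subseteq \DS$ be the ideal of the fiber at $t=\lambda$. Using the $\operatorname{Tor}$ argument above together with flatness of $\DS[t]/\mathfrak{I}$, the specialisation $(\mathfrak{I}/\mathfrak{I}^2)\otimes_{k[t]}k[t]/(t-\lambda)$ is naturally identified with $I_\lambda/I_\lambda^2$, whose $k$-dimension computes the tangent space dimension to the Hilbert scheme at $[\Spec \DS/I_\lambda]$ (by the formula recalled just before Lemma~\ref{ref:tangentflatcondition:lem}). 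Flatness of $\mathfrak{I}/\mathfrak{I}^2$ over the PID $k[t]$ is then equivalent to constancy of this dimension in $\lambda$. Now Proposition~\ref{ref:fibersofray:prop} identifies each fiber over $\lambda\neq 0$ as $\Spec \Apolar{f}\sqcup(\Spec \Apolar{\partial\hook f})^{\sqcup d-1}$, and the assumed unobstructedness of $\Apolar{f}$ and $\Apolar{\partial\hook f}$ pins the tangent dimension there to $N\cdot d_g$, where $d_g := \dim_k\Apolar{f} + (d-1)\dim_k\Apolar{\partial\hook f} = \dim_k\Apolar{g}$ is the (flatly constant) fiber length. Upper semicontinuity forces the tangent dimension at $t=0$, namely at $[\Spec\Apolar{g}]$, to be at least $Nd_g$, with equality exactly when $\Apolar{g}$ is unobstructed, giving (3) $\iff$ (1).

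I expect the main technical delicacy to be the careful identification of $(\mathfrak{I}/\mathfrak{I}^2)\otimes_{k[t]}k[t]/(t-\lambda)$ with $I_\lambda/I_\lambda^2$, and the bookkeeping that unobstructedness does not depend on the ambient affine space chosen (see the discussion in Section~\ref{sss:smoothability}); this independence is what makes it legitimate to compare the tangent dimensions of $\Apolar{f}$, $\Apolar{\partial\hook f}$, and $\Apolar{g}$ inside a single $\mathbb{A}^N$. The characteristic restriction that $\operatorname{char} k$ does not divide $d-1$ enters only through Proposition~\ref{ref:fibersofray:prop}, to ensure that the fiber over $\lambda\neq 0$ really does split into the indicated disjoint union; once that splitting is available, every subsequent argument is characteristic-free.
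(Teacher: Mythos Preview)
Your argument is correct and follows essentially the same route as the paper: Lemma~\ref{ref:tangentflatcondition:lem} for $(3)\iff(4)$, the short exact sequence plus flatness of $\DS[t]/\mathfrak{I}$ for $(2)\iff(3)$, and the fiber-dimension comparison via Proposition~\ref{ref:fibersofray:prop} (which the paper packages as Remark~\ref{ref:tangentfibers:rmk}) for $(1)\iff(2)$, with the $d$-independence of (4) giving the ``a'' versions. One small slip: in your third paragraph you label the step ``$(3)\iff(1)$'' but then correctly discuss flatness of $\mathfrak{I}/\mathfrak{I}^2$, which is condition (2); the argument you actually give is $(2)\iff(1)$.
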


\begin{proof}
    It is straightforward to check that the inclusion $I\cdot J\subseteq I\cap J^2 \cap
    (I^2:\partial) \subseteq I\cdot J$ in Point 4 always holds,
    thus the other inclusion is
    equivalent to equality.\\
    3. $\iff$ 4. $\iff$ 3a.
    The equivalence of Point 3 and
    Point 4 follows from Lemma
    \ref{ref:tangentflatcondition:lem}. Since Point 4 is independent of $d$,
    the equivalence of Point 4 and Point 3a also follows.

    2. $\iff$ 3. and 2a. $\iff$ 3a.
    We have an exact sequence of
    $k[t]$-modules
    \[0\to \Ddefid/\Ddefid^2 \to \DS[t]/\Ddefid^2 \to \DS[t]/\Ddefid
\to 0.\] Since $\DS[t]/\Ddefid$ is a flat
    $k[t]$-module by Proposition \ref{ref:raysumflatness:prop}, we see from
    the long exact sequence of $\operatorname{Tor}$ that
    $\Ddefid/\Ddefid^2$ is flat if and only if $\DS[t]/\Ddefid^2$ is flat.

    1. $\iff$ 2. and 1a. $\iff$ 2a.
    Let $g\in P[x]$ be the $d$-th ray sum of $f$ with
    respect to $\partial$. We may consider
    $\Apolar{g}$, $\Apolar{f}$, $\Apolar{\partial\hook f}$ as quotients of a
    polynomial ring $T_{poly}$, corresponding to points of the Hilbert scheme.
    The dimension of the tangent space at $\Apolar{g}$ is given by $\dimk
    \Ddefid/\Ddefid^2 \tensor k[t]/t = \dimk \Ddefid/(\Ddefid^2 + (t))$. By Remark \ref{ref:tangentfibers:rmk} it is
    equal to the sum of the dimension of the tangent space at $\Apolar{f}$ and
    $(d-1)$ times the dimension of the tangent space to $\Apolar{\partial\hook f}$. Since both
    algebras are smoothable and unobstructed we conclude that $\Apolar{g}$ is also
    unobstructed. On the other hand, if $\Apolar{g}$ is unobstructed, then
    $\Ddefid/\Ddefid^2$ is a finite $k[t]$-module such that the length of
    the fiber $\Ddefid/\Ddefid^2\tensor k[t]/\mathfrak{m}$ does not depend on the
    choice of the maximal ideal $\mathfrak{m} \subseteq k[t]$. Then
    $\Ddefid/\Ddefid^2$ is flat by \cite[Ex~II.5.8]{HarAG} or
    \cite[Thm~III.9.9]{HarAG} applied to the associated sheaf.
\end{proof}

\begin{remark}
    The condition from Point 4 of Theorem
    \ref{ref:nonobstructedconds:thm} seems very technical. It is
    enlightening to look at the images of $(I^2:\partial)\cap I$ and $I\cdot
    J$ in $I/I^2$.
    The image of $(I^2:\partial)\cap I$ is the annihilator of $\partial$ in
    $I/I^2$. This annihilator clearly contains $(I:\partial)\cdot I/I^2 =
    J\cdot I/I^2$. This shows that if the $S/I$-module $I/I^2$ is ``nice'', for
    example free, we should have an equality $(I^2:\partial)\cap I = I\cdot J$.
    More generally this equality is connected to the syzygies of
    $I/I^2$.
\end{remark}

In the remainder of this subsection we will prove that in several situations
the conditions of Theorem~\ref{ref:nonobstructedconds:thm} are satisfied.

\begin{cor}\label{ref:CIarenonobstructed:cor}
    We keep the notation and assumptions of
    Theorem~\ref{ref:nonobstructedconds:thm}. Suppose further
    that the algebra $\DS/I = \Apolar{f}$ is a complete intersection. Then the equivalent
    conditions of Theorem~\ref{ref:nonobstructedconds:thm} are satisfied.
\end{cor}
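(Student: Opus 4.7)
The plan is to verify the ideal-theoretic condition in Point~4 of Theorem~\ref{ref:nonobstructedconds:thm}, namely the inclusion
\[
I \cap J^2 \cap (I^2 : \partial) \subseteq I \cdot J,
\]
where $I = \Dan{f}$ and $J = \Dan{\partial \hook f}$.

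First I would collect two easy reductions. Note that $J$ coincides with the colon ideal $(I:\partial)$, since $\sigma \hook (\partial \hook f) = (\sigma\partial) \hook f$, so $\sigma \in J \iff \sigma\partial \in I$. As an immediate consequence, $I \subseteq J$ (because $I$ is an ideal, so $\sigma \in I$ forces $\sigma\partial \in I$), and hence $I^2 \subseteq IJ \subseteq J^2$. In particular, the target inclusion reduces to showing $I \cap (I^2:\partial) \subseteq IJ$, since the additional intersection with $J^2$ is now redundant.

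The main step is to analyse the $\partial$-torsion of the conormal module $I/I^2$. Since $\DS/I$ is a complete intersection, by a classical result (see e.g.~\cite[Thm~16.2]{EisView}), the module $I/I^2$ is free over $\DS/I$ with basis given by the images of any minimal generating regular sequence $f_1, \ldots, f_r$ of $I$. Under the resulting identification $I/I^2 \simeq (\DS/I)^{\oplus r}$, multiplication by $\partial$ acts as the diagonal endomorphism, whose kernel is $(\annn{\DS/I}{\partial})^{\oplus r} = ((I:\partial)/I)^{\oplus r} = (J/I)^{\oplus r}$. Translating back through the isomorphism, this kernel corresponds to the submodule $J \cdot (I/I^2) = (IJ + I^2)/I^2 = IJ/I^2$, where the final equality uses $I \subseteq J$.

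Unpacking, $\{x \in I : x\partial \in I^2\} \subseteq IJ + I^2 = IJ$, which is precisely the inclusion $I \cap (I^2:\partial) \subseteq IJ$ required. Applying Theorem~\ref{ref:nonobstructedconds:thm} then yields all the equivalent conclusions in the corollary. I expect no substantial obstacle; the only subtlety is invoking the freeness of $I/I^2$ for a complete intersection, which is standard, and keeping track of the fact that multiplication by $\partial$ on $\DS/I$ has annihilator exactly $J/I$.
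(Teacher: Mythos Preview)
Your proof is correct and follows essentially the same approach as the paper: both use that $J=(I:\partial)$ and that $I/I^2$ is free over $\DS/I$ when $\DS/I$ is a complete intersection, concluding that the $\partial$-torsion of $I/I^2$ is exactly $JI/I^2$, hence $I\cap(I^2:\partial)=IJ$. The paper compresses this into a single line, while you spell out the identification of the kernel with $(J/I)^{\oplus r}$ and the reduction that the intersection with $J^2$ is redundant, but the substance is identical.
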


\begin{proof}
    Since $\DS/I$ is a complete intersection, the $\DS/I$-module $I/I^2$ is
    free, see e.g. \cite[Thm~16.2]{Matsumura_CommRing} and discussion above it or
    \cite[Ex~17.12a]{EisView}. It implies that $(I^2 : \partial) \cap I = (I : \partial)I
    = JI$, because $J = \Dan{\partial\hook f} = \{ s\in \DS\ |\ s
    \partial\hook f = 0\} = (\Dan{f} : \partial) = (I : \partial)$. Thus the
    condition from Point 4 of Theorem
    \ref{ref:nonobstructedconds:thm} is satisfied.
\end{proof}

\begin{example}\label{ref:14531case:example}
    If $A = \DS/I$ is a complete intersection, then it is
    smoothable and unobstructed
    (see Subsection~\ref{sss:smoothability}). The apolar algebras
    of monomials are complete intersections, therefore the assumptions of
    Theorem~\ref{ref:nonobstructedconds:thm} are satisfied e.g.~for $f
    =x_1^2x_2^2x_3$ and $\partial = \Dx_2^2$. Now
    Corollary~\ref{ref:CIarenonobstructed:cor} implies that the equivalent
    conditions of the Theorem are also satisfied, thus $x_1^2x_2^2x_3 +
    x_4^{d}x_1^2x_3 = (x_2^2x_3)(x_1^2 + x_4^d)$ is unobstructed for every $d\geq
    2$ (provided $\kchar = 0$ or $d \leq \kchar$).
    Similarly, $x_1^2x_2x_3 + x_4^2x_1$ is unobstructed
    and has Hilbert function $(1, 4, 5, 3, 1)$.
\end{example}

\begin{example}\label{ref:1441:example}
    Let $f = (x_1^2 + x_2^2)x_3$, then $\Dan{f} = (\Dx_1^2 - \Dx_2^2,
    \Dx_1\Dx_2, \Dx_3^2)$ is a complete intersection. Take $\partial =
    \Dx_1\Dx_3$, then $\partial\hook f = x_1$ and
    $\partial^2\hook f = 0$, thus $f + x_4^2\partial\hook f = x_1^2x_3 +
    x_2^2x_3 + x_4^2x_1$ is unobstructed. Note that by
    Remark~\ref{ref:Hilbfunccouting:rmk} the apolar algebra of this
    polynomial has Hilbert function $(1, 4, 4, 1)$.
\end{example}

\begin{prop}\label{ref:unobstructeddoubleray:prop}
    Let $f\in \DP$ be such that $\Apolar{f}$ is a complete
    intersection.

    Let $d$ be a natural number. Suppose that $\kchar = 0$ or $d\leq \kchar$.
    Take $\partial\in \DPut{Sf}{\DS}$ such that $\partial^2\hook f = 0$ and
    $\Apolar{\partial\hook f}$ is also a complete intersection.
    Let $g\in \DP[y]$ be the $d$-th ray sum $f$ with respect to $\partial$,
    i.e.~$g = f + y^{d} \partial\hook f$.

    Suppose that $\deg \partial\hook f > 0$.
    Let $\beta$ be the variable dual to $y$ and $\sigma\in \DSf$ be such that
    $\sigma\hook (\partial\hook f) = 1$. Take $\varphi := \sigma\beta\in
    \DPut{Sg}{\DT} = \DS[[\beta]]$.
    Let $h$ be any ray sum of $g$ with respect to $\varphi$, explicitly
    \[
        h = f + y^{d} \partial\hook f + z^my^{d-1}
    \]
    for some $m\geq 2$.

    Then the algebra $\Apolar{h}$ is
    unobstructed.
\end{prop}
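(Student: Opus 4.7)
The plan is to deduce the unobstructedness of $\Apolar{h}$ from a second application of Theorem~\ref{ref:nonobstructedconds:thm}, this time to the triple $(g,\varphi,m)$ in the role of $(f,\partial,d)$. This reduces the statement to a verification of condition~(4) of that theorem.

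I first dispatch the prerequisite hypotheses of the theorem. A direct contraction computation gives
\[
\varphi\hook g \;=\; \sigma\hook(\beta\hook g) \;=\; \sigma\hook\bigl(y^{d-1}\,\partial\hook f\bigr) \;=\; y^{d-1}\cdot(\sigma\hook\partial\hook f) \;=\; y^{d-1},
\]
and since $\deg(\partial\hook f)>0$ forces $\sigma\in\DmmS$, the analogous computation yields $\varphi^{2}\hook g = y^{d-2}\cdot(\sigma\hook 1)=0$. The algebra $\Apolar{\varphi\hook g} = \Apolar{y^{d-1}} \cong k[\beta]/(\beta^{d})$ is a complete intersection in $\DT = \DS[[\beta]]$, hence smoothable and unobstructed. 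The algebra $\Apolar{g}$ is smoothable by Corollary~\ref{ref:smoothabilityofrayiff:cor} and unobstructed by a first application of Theorem~\ref{ref:nonobstructedconds:thm} to $(f,\partial,d)$; the condition~(4) needed there is supplied by Corollary~\ref{ref:CIarenonobstructed:cor}, which applies because $\Apolar{f}$ is a complete intersection.

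The principal remaining task, which I expect to be the main obstacle, is to verify condition~(4) for $(g,\varphi,m)$:
\[
I_{g} \cap J_{\varphi}^{2} \cap (I_{g}^{2} : \varphi) \;\subseteq\; I_{g}\,J_{\varphi},
\]
where $I_{g} = \Dan{g}$ and $J_{\varphi} = \Dan{y^{d-1}} = \DmmS\DT + \beta^{d}\DT$. The difficulty is that $\Apolar{g}$ is in general \emph{not} a complete intersection, so Corollary~\ref{ref:CIarenonobstructed:cor} is no longer available for this second application. My plan is to exploit the explicit description $I_{g} = I\DT + \beta J'\DT + (\beta^{d} - \partial)\DT$ given by Proposition~\ref{ref:raysumideal:prop}, with $I = \Dan{f}$ and $J' = \Dan{\partial\hook f}$, together with the identity $\partial J' \subseteq I$ (which follows from $J' = (I:\partial)$). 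Any $\xi$ in the left-hand side then admits a canonical reduction $\xi \equiv \sum_{i=0}^{d-1}\xi_{i}\beta^{i} \pmod{\beta^{d} - \partial}$ with $\xi_{0}\in I\cap\DmmS^{2}$ and $\xi_{i}\in J'\cap\DmmS^{2}$ for $i\geq 1$, and the constraint $\xi\varphi\in I_{g}^{2}$ translates, componentwise in $\beta$-degree and after reducing modulo $(\beta^{d}-\partial)$, into $\DS$-level relations of the form $\sigma\xi_{i-1} \in I^{2} + IJ' + \partial(J')^{2}$. Using the freeness of $I/I^{2}$ over $\DS/I$ and of $J'/(J')^{2}$ over $\DS/J'$, both of which follow from the complete intersection hypotheses, these relations force each $\xi_{i}$ to lie in $\DmmS\cdot I$ or $\DmmS\cdot J'$ as appropriate. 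Reassembling the reduced form then yields $\xi\in I_{g}J_{\varphi}$, completing the verification.
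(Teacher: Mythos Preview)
Your overall strategy coincides with the paper's: reduce to verifying condition~(4) of Theorem~\ref{ref:nonobstructedconds:thm} for the pair $(g,\varphi)$, having first established unobstructedness of $\Apolar{g}$ via Corollary~\ref{ref:CIarenonobstructed:cor}. The preliminary computations ($\varphi\hook g = y^{d-1}$, $\varphi^2\hook g = 0$, and the unobstructedness of $\Apolar{y^{d-1}}$) are correct and match the paper.

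The gap is in your sketch of the verification of condition~(4). You propose to reduce $\xi$ modulo $(\beta^d-\partial)$ to obtain a representative $\sum_{i=0}^{d-1}\xi_i\beta^i$ and then read off componentwise relations from $\xi\varphi\in I_g^2$. But $(\beta^d-\partial)$ lies in $I_g$ and in $J_\varphi$, yet \emph{not} in $I_gJ_\varphi$ (nor in $J_\varphi^2$, nor in $I_g^2$), so subtracting arbitrary multiples of it destroys membership in the intersection you are trying to analyse. The paper instead reduces only modulo $I_gJ_\varphi$: it uses $(\beta^d-\partial)^2\in I_gJ_\varphi$ to kill $\beta$-degrees $\geq 2d$, then for $d<i<2d$ uses $\gamma_i\in\mathfrak{m}_S\subseteq J_\varphi$ together with $\beta^i\in I_g$ to absorb $\gamma_i\beta^i$ into $I_gJ_\varphi$, and similarly handles $i=d$. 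Only after this more delicate pruning does one arrive at a representative $\gamma=\sum_{i<d}\gamma_i\beta^i$ still lying in the triple intersection.

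The subsequent step is also more subtle than ``reading off componentwise''. One cannot directly extract $\DS$-level relations from $\gamma\varphi\in I_g^2$, because $I_g^2$ is not homogeneous in $\beta$. The paper first proves the key inclusion $I_g^2\cap J_f\beta\DT\subseteq I_f^2\DT+\beta I_fJ_f\DT+\beta^2J_f^2\DT+(\beta^d-\partial)\beta J_f\DT$, which requires tracking where the $(\beta^d-\partial)I_f$ and $(\beta^d-\partial)^2$ contributions land; this is the technical heart of the argument. The resulting relations are $\sigma\gamma_0\in I_fJ_f$ and $\sigma\gamma_i\in J_f^2$ for $i\geq 1$, not the uniform ``$\sigma\xi_{i-1}\in I^2+IJ'+\partial(J')^2$'' you wrote. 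Your final step is correct in spirit: one combines freeness of $I_f/I_f^2$ and $J_f/J_f^2$ with the identity $(J_f:\sigma)=\mathfrak{m}_S$ (which follows from $\sigma\hook(\partial\hook f)=1$) to conclude $\gamma_0\in I_f\mathfrak{m}_S$ and $\gamma_i\in J_f\mathfrak{m}_S$, whence $\gamma\in I_g\mathfrak{m}_S\subseteq I_gJ_\varphi$.
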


\begin{proof}
    First note that $\varphi\hook g = y^{d-1}$ and so $\varphi^2\hook g =
    \sigma\hook y^{d-2} = 0$, since $\sigma\in
    \mathfrak{m}_{\DSf}$. Therefore indeed $h$ has the presented form.
    \def\DmmV{\mathfrak{m}_{\DSf}}%

    From Corollary \ref{ref:CIarenonobstructed:cor} it follows that
    $\Apolar{g}$ is unobstructed. Since $\varphi\hook g = y^{d-1}$,
    the algebra $\Apolar{\varphi\hook g}$ is unobstructed as well. Now by
    Theorem \ref{ref:nonobstructedconds:thm} it remains to prove that
    \begin{equation}\label{eq:maincontainment}
        (I_g^2:\varphi) \cap I_g \cap J_g^2 \subseteq I_g J_g,
    \end{equation}
    where
    $\DPut{Ig}{I_g} =
    \annn{\DSg}{g}, \DPut{Jg}{J_g} = \annn{\DSg}{\varphi\hook g}$.
    The rest of the proof is a technical verification of this claim.
    Denote $\DPut{If}{I_f} := \annn{\DSf}{f}$ and $\DPut{Jf}{J_{f}} := \annn{\DSf}{\partial\hook f}$;
    note that we take annihilators in $\DSf$.
    By Proposition \ref{ref:raysumideal:prop} we have $\DIg = \DIf\DT +
    \beta\DJf\DT + \DPut{spec}{(\beta^{d} - \partial)}\DT$.
    Consider $\gamma\in \DSg$ lying in $(\DIg^2 : \varphi) \cap \DIg \cap
    \DJg^2$. Write $\gamma = \gamma_0 + \gamma_1 \beta + \gamma_2 \beta^2 +
    \dots$ where $\gamma_i\in \DSf$, so they do not contain $\beta$. We will
    prove that $\gamma\in \DIg\DJg$.

    First, since $\Dspec^2 \in \DIg\DJg$ we may reduce powers of $\beta$ in $\gamma$ using this
    element and so we assume $\gamma_{i} = 0$ for $i\geq 2d$.
    Let us take $i < 2d$. Since $\gamma\in \DJg^2 =
    \pp{\annn{\DSg}{y^{d-1}}}^2 = \pp{\DmmV, \beta^d}^2$ we see that $\gamma_i\in
    \DmmV \subseteq \DJg$. For $i > d$ we have $\beta^i \in \DIg$, so
    that $\gamma_i \beta^i \in \DJg\DIg$ and we may
    assume $\gamma_i = 0$.
    Moreover, $\beta^d \gamma_d - \partial \gamma_d \in \DIg\DJg$ so we may also
    assume $\gamma_d = 0$, obtaining
    \[\gamma = \gamma_0 + \dots + \gamma_{d-1} \beta^{d-1}.\]
    From the explicit description of $\DIg$ in
    Proposition~\ref{ref:raysumideal:prop} it follows that $\gamma_i\in \DJf$
    for all $i$.

    Let $M = \DIg^{2} \cap \varphi\DT = \DIg^2 \cap \DJf\beta\DT$. Then for
    $\gamma$ as above we have $\gamma \varphi\in M$, so we will analyse the
    module $M$.
    Recall that
    \begin{equation}\label{eq:scarydecomposition}
        \DIg^2 = \DIf^2\cdot \DT + \beta \DIf \DJf\cdot \DT + \beta^2 \DJf^2\cdot \DT +
        \Dspec\DIf\cdot \DT + \Dspec\beta\DJf \cdot \DT + \Dspec^2\cdot \DT.
    \end{equation}
    We claim that
    \begin{equation}\label{eq:contains}
        M \subseteq \DIf^2\cdot \DT + \beta\DIf \DJf\cdot \DT + \beta^2
        \DJf^2\cdot \DT + \Dspec\beta\DJf\cdot \DT.
    \end{equation}
    We have $\DIg^2 \subseteq
    \DJf \cdot \DT + \Dspec^2\cdot\DT$, so
    if an element of $\DIg^2$ lies in
    $\DJf\cdot\DT$, then its coefficient standing next to $\Dspec^2$ in Presentation
    \eqref{eq:scarydecomposition} is an element of $\DJf$ by
    Lemma~\ref{ref:decompositionhomog:lem}.
    Since $\DJf \cdot
    \Dspec \subseteq \DIf + \beta\DJf$, we may ignore the term $\Dspec^2$:
    \begin{equation}\label{eq:lessscdec}
        M \subseteq \DIf^2\cdot \DT + \beta \DIf \DJf\cdot \DT + \beta^2 \DJf^2\cdot \DT +
        \Dspec\DIf\cdot \DT +  \Dspec\beta\DJf\cdot \DT.
    \end{equation}
    Choose an element of $M$ and let $i\in \DIf\cdot\DT$ be the coefficient of this
    element standing next to $\Dspec$. Since $\DIf\DT \cap \beta \DT \subseteq
    \DJf\DT$ we may assume that $i$ does not contain $\beta$, i.e. $i\in
    \DIf$.
    Now, if an element of the right hand side of \eqref{eq:lessscdec} lies in
    $\beta\cdot\DT$, then the coefficient $i$ satisfies
    $i\cdot \partial\in \DIf^2$, so that $i\in (\DIf^2 : \partial)$. Since
    $\DIf$ is a complete intersection ideal the $\DS/\DIf$-module
    $\DIf/\DIf^2$ is free, see Corollary~\ref{ref:CIarenonobstructed:cor} for
    references. Then we have $(\DIf^2: \partial) =
    (\DIf:\partial)\DIf$ and $i\in (\DIf:\partial)\DIf = \DIf\DJf$. Then
    $i\cdot \Dspec \subseteq \DIf^2 + \beta\cdot \DIf\cdot \DJf$ and so the
    Inclusion \eqref{eq:contains} is proved. We come back to the proof of
    proposition.

    From Lemma~\ref{ref:decompositionhomog:lem} applied to the ideal
    $\DJf^2\DT$ and the element $\beta\Dspec$ and the fact that $\beta\partial\DJf^2
    \subseteq I_g^2$ we compute
    that $M\cap \{ \delta\ |\ \deg_{\beta} \delta \leq d\}$ is
    a subset of $\DIf^2\cdot\DT + \beta\cdot \DIf \DJf\cdot\DT + \beta^2
    \DJf^2\cdot \DT$. Then $\gamma \varphi = \gamma \beta\sigma$ lies in this set, so that
    $\gamma_0 \in (\DIf\DJf : \sigma)$ and $\gamma_{n} \in (\DJf^2 : \sigma)$
    for $n > 1$. Since $\Apolar{f}$ and $\Apolar{\partial\hook f}$ are
    complete intersections, we have
    $\gamma_0 \in \DIf\DmmV$ and $\gamma_i \in
    \DJf\DmmV$ for $i \geq 1$.
    It follows that $\gamma\in \DIg\DmmV \subseteq \DIg\DJg$.
\end{proof}

\begin{example}\label{ref:1551:example}
    Let $f\in P$ be a polynomial such that $A = \Apolar{f}$ is a complete
    intersection. Take
    $\partial$ such that $\partial\hook f = x_1$ and $\partial^2\hook f = 0$.
    Then the apolar algebra of $f + y_1^{d}x_1 + y_{2}^{m}y_1^{d-1}$ is unobstructed
    for any $d, m\geq 2$ (less or equal to $\kchar$ if it is non-zero). In particular $g = f + y_1^2x_1 + y_2^2y_1$ is
    unobstructed.

    Continuing Example \ref{ref:1441:example}, if $f =
    x_1^2x_3 + x_2^2x_3$, then $x_1^2x_3 + x_2^2x_3 + x_4^2x_1 + x_5^2x_4$ is
    unobstructed. The apolar algebra of this polynomial has Hilbert function
    $(1, 5, 5, 1)$.

    Let $g = x_1^2x_3 + x_2^2x_3 + x_4^2x_1$, then $x_1^2x_3 + x_2^2x_3 +
    x_4^2x_1 + x_5^2x_4$ is a ray sum of $g$ with respect to $\partial =
    \Dx_4\Dx_1$. Let $I := \Dan{g}$ and $J := (I : \partial)$.
    In contrast with Corollary~\ref{ref:CIarenonobstructed:cor} and Example~\ref{ref:1441:example} one may
    check that all three terms $I$, $J^2$ and $(I^2 : \partial)$ are necessary to
    obtain equality in the inclusion \eqref{eq:maincontainment} for $g$ and $\partial$, i.e.~no two
    ideals of $I$, $J^2$, $(I^2 : \partial)$ have intersection equal to $IJ$.
\end{example}

\begin{example}\label{ref:144311:example}
    Let $f = x_1^5 + x_2^4$. Then the annihilator of $f$ in $k[\Dx_1, \Dx_2]$
    is a complete intersection, and this is true for every $f\in k[x_1, x_2]$. Let
    $g = f + x_3^2x_1^2$ be the second ray sum of $f$ with respect to
    $\Dx_1^3$ and $h = g + x_4^2x_3$ be the second ray sum of $g$ with
    respect to $\Dx_3\Dx_1^2$.
    Then the apolar algebra of
    \[h = x_1^5 + x_2^4 + x_3^2x_1^2 + x_4^2x_3\] is smoothable and not
    obstructed. It has Hilbert function $(1, 4, 4, 3, 1, 1)$.
\end{example}

\begin{remark}
    The assumption $\deg \partial\hook f > 0$ in
    Proposition~\ref{ref:unobstructeddoubleray:prop} is necessary:
    the polynomial $h = x_1x_2x_3 + x_4^2 + x_5^2x_4$ is obstructed, with
    length $12$ and tangent space dimension $67 > 12\cdot 5$ over
    $k = \mathbb{C}$. The polynomial $g$ is the fourth ray sum of $x_1 x_2 x_3$
    with respect to $\Dx_1\Dx_2\Dx_3$ and $h$ is the second
    ray sum of $g = x_1 x_2 x_3 + x_4^2$ with respect to $\Dx_4$, thus this
    example satisfies the assumptions of
    Proposition~\ref{ref:unobstructeddoubleray:prop} except for $\deg
    \partial\hook f > 0$. Note that in this case $\Dx_4^2\hook g \neq 0$.
\end{remark}

\section{Proof of Main Theorem and comments on the degree 14
case}\label{sec:proof}

\subsection{Preliminary results}\label{sss:parameterising}

Let $r\geq 1$ be a natural number and $V $ be a constructible subset of
$P_{\leq s}$. Assume that the apolar
algebra $\Apolar{f}$ has length $r$ for every closed
point $f\in V$. Then we may construct the incidence scheme $\{(f,
    \Apolar{f})\}\to V$ which is a finite flat family over $V$ and thus we obtain a morphism from $V$ to the (punctual) Hilbert
    scheme of $r$ points on an appropriate $\mathbb{P}^n$. See
    \cite[Prop~4.39]{JelMSc} for details.

    Consider $f\in \DP_{\leq s}$.
    The apolar algebra of $f$ has length at most $r$ if and only if the matrix
    of partials $\DS_{\leq s} f$ has rank at most $r$. This is a
    closed condition, so we obtain the following
    Remark~\ref{ref:semicontinuity:rmk}.
\begin{remark}\label{ref:semicontinuity:rmk}
    Let $s$ be a positive integer and $V \subseteq \DP_{\leq s}$ be a constructible subset. Then
    the set $U$, consisting of $f\in V$ such that the apolar algebra of $f$
    has the maximal length (among the elements of $V$), is open in $V$. In
    particular, if $V$ is irreducible then $U$ is also irreducible.
\end{remark}

\begin{example}\label{ref:semicondegthree:example}
    Let $\DP_{\geq 4} = k[x_1, \ldots
    ,x_n]_{\geq 4}$ be the space of polynomials that are sums of
    monomials of degree at least $4$.
    Suppose that the set $V \subseteq \DP_{\geq 4}$ parameterising algebras
    with fixed Hilbert function $H$ is irreducible. Then also the set $W$ of
    polynomials $f\in \DP$ such that $f_{\geq 4}\in V$ is irreducible. Let
    $e:= H(1)$ and suppose that the symmetric decomposition of $H$ has zero
    rows $\Dhdvect{s-3} = (0, 0, 0, 0)$ and $\Dhdvect{s-2} = (0, 0, 0)$, where
    $s = \deg f$.
    We claim that general element of $W$ corresponds to an algebra $B$ with Hilbert
    function: $H_{max} = H + (0, n-e, n-e, 0)$.
    Indeed, since we may only vary the degree three part of the polynomial,
    the function $H_B$ has the form $H + (0, a, a, 0) + (0, b, 0)$ for some
    $a, b$ such that $a + b \leq n - e$. Therefore algebras with Hilbert
    function $H_{max}$ are precisely the algebras of maximal possible length.
    Since $H_{max}$ is attained for $f_{\geq 4} + x_{e+1}^3 +
    \ldots  + x_n^3$, the claim follows from
    Remark~\ref{ref:semicontinuity:rmk}.
\end{example}

\subsection{Lemmas on Hilbert functions}

In the following $H_A$ denotes the Hilbert function of an algebra $A$.

\begin{lem}\label{ref:hilbertfunc:lem}
    Suppose that $A$ is a local Artin Gorenstein algebra of socle degree $s\geq 3$ such that
    $\Dhdvect{A, s-2} = (0, 0, 0)$. Then $\len A \geq 2\left(H_A(1) + 1\right)$.
    Furthermore, equality occurs if and only if $s = 3$.
\end{lem}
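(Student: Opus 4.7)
The plan is to expand $\len A = \sum_{t=0}^{s} \len(\Dhdvect{t})$ and bound each summand using the symmetry and positivity properties from Theorem~\ref{ref:Hfdecomposition:thm}, then sum using $H_A(1) = \sum_t \Dhdvect{t}(1)$.

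The key estimate I would establish is $\len(\Dhdvect{t}) \geq 2\Dhdvect{t}(1)$ for every $t \geq 1$. For $1 \leq t \leq s-3$ this is immediate from the symmetry $\Dhdvect{t}(1) = \Dhdvect{t}(s-t-1)$ together with the observation that these two positions are distinct (since $s-t-1 \geq 2$). For $t = s-2$ the vector $\Dhdvect{s-2}$ has the shape $(0,q,0)$, and by hypothesis $q = 0$, so both sides vanish; for $t \in \{s-1, s\}$ we have $\Dhdvect{t} = 0$ and there is nothing to check. Next, since $\Dhdvect{0}$ is the Hilbert function of a graded Artin Gorenstein algebra of socle degree $s \geq 3$, the values $\Dhdvect{0}(0) = \Dhdvect{0}(s) = 1$ and $\Dhdvect{0}(1) = \Dhdvect{0}(s-1)$ occur at four pairwise distinct positions, yielding
\[
\len(\Dhdvect{0}) \;\geq\; 2 + 2\Dhdvect{0}(1).
\]
Summing these bounds,
\[
\len A \;=\; \sum_{t=0}^{s} \len(\Dhdvect{t}) \;\geq\; 2 + 2\sum_{t=0}^{s} \Dhdvect{t}(1) \;=\; 2 + 2H_A(1),
\]
which is the desired inequality.

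For the equality statement, the case $s = 3$ is immediate: then the decomposition forces $\Dhdvect{0} = (1,n_0,n_0,1)$ with $n_0 = H_A(1)$, and all other $\Dhdvect{t}$ vanish, giving $\len A = 2n_0 + 2$. Conversely, if $s \geq 4$, then equality above would force $\len(\Dhdvect{0}) = 2 + 2\Dhdvect{0}(1)$, i.e.~$\Dhdvect{0}(i) = 0$ for some $2 \leq i \leq s-2$. The main obstacle is ruling this out: I would invoke the standard fact (already used implicitly in the excerpt) that a graded Artin local $k$-algebra is generated in degree one, so $A_j = A_1 \cdot A_{j-1}$; hence the vanishing of $\Dhdvect{0}$ at some $i \leq s$ propagates to all later degrees and contradicts $\Dhdvect{0}(s) = 1$. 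Therefore $\Dhdvect{0}(i) \geq 1$ for every $0 \leq i \leq s$, the intermediate slack $(s-3)$ is strictly positive when $s \geq 4$, and the inequality is strict in that range.
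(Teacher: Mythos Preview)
Your argument is correct and essentially identical to the paper's: both bound $\sum_j \Dhdvect{0}(j) \geq 2 + 2\Dhdvect{0}(1)$ and $\sum_j \Dhdvect{t}(j) \geq 2\Dhdvect{t}(1)$ for $1 \leq t \leq s-3$ via symmetry, then sum. One small imprecision: the claim that ``a graded Artin local $k$-algebra is generated in degree one'' is false in general; what you need (and what holds here) is that the graded Gorenstein algebra realising $\Dhdvect{0}$ is $\Apolar{f_s}$, a quotient of a standard-graded polynomial ring, hence standard graded---so $\Dhdvect{0}(i)=0$ for some $i\leq s$ indeed forces $\Dhdvect{0}(s)=0$.
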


\begin{proof}
    Consider the symmetric decomposition $\Dhdvect{\bullet} = \Dhdvect{A,
    \bullet}$ of $H_A$.
    From symmetry we have $\sum_j \Dhd{0}{j} \geq 2 + 2\Dhd{0}{1}$ with
    equality only if $\Dhdvect{0}$ has no terms between $1$ and $s-1$~i.e.~when $s = 3$.
    Similarly $\sum_j \Dhd{i}{j}\geq 2\Dhd{i}{1}$ for all $1 \leq i < s-2$.
    Summing these inequalities we obtain
    \[
        \len A = \sum_{i<s-2} \sum_j \Dhd{i}{j} \geq 2 + \sum_{i<s-2} 2\Dhd{i}{1} = 2
        + 2H_A(1).\qedhere
    \]
\end{proof}

\begin{lem}\label{ref:trikofHilbFunc:lem}
    Let $A$ be a local Artin Gorenstein algebra of length at most $14$. Suppose
    that $4\leq H_A(1) \leq 5$. Then $H_A(2)\leq 5$.
\end{lem}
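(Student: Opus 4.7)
The plan is to argue by contradiction: assume $H_A(2) \geq 6$ and show this forces $\operatorname{len}(A) \geq 15$. Let $s$ denote the socle degree of $A$. I will split on cases in $s$, using throughout Iarrobino's symmetric decomposition $H_A = \sum_{i=0}^{s-2} \Delta_i$ from Theorem~\ref{ref:Hfdecomposition:thm} (so each $\Delta_i$ has length $s+1-i$ with $\Delta_i(j)=\Delta_i(s-i-j)$ and $\Delta_0$ the Hilbert function of a graded Gorenstein algebra of socle degree $s$), the elementary fact that $H_A(i) \geq 1$ for all $0 \leq i \leq s$, and Macaulay's Growth Theorem applied both to $\Delta_0$ and to the partial sum $\Delta_0 + \Delta_1$ (a graded Hilbert function by Theorem~\ref{ref:Hfdecomposition:thm}).

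The extreme cases are easy. If $s \leq 2$, then $H_A(2) \leq 1 < 6$. If $s = 3$, symmetry gives $\Delta_0 = (1, d, d, 1)$ and $\Delta_1 = (0, a, 0)$, so $H_A(2) = d \leq d+a = H_A(1) \leq 5$. If $s \geq 6$, the estimate $H_A(i) \geq 1$ for $0 \leq i \leq s$ gives
\[
\operatorname{len}(A) \;\geq\; 1 + H_A(1) + H_A(2) + (s-2) \;\geq\; 1 + 4 + 6 + 4 \;=\; 15.
\]

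The main obstacle is to handle $s = 4$ and $s = 5$, where the lower bound of the previous paragraph degenerates to $14$, not $15$. The strategy is to expand the symmetric decomposition fully, derive a sharper length identity, and then combine it with Macaulay's Growth Theorem. For $s = 4$, writing $\Delta_0 = (1, d_1, d_2, d_1, 1)$, $\Delta_1 = (0, e, e, 0)$ and $\Delta_2 = (0, f, 0)$, a direct computation gives $\operatorname{len}(A) = 2 + d_1 + H_A(1) + H_A(2)$; since $d_1 \geq 1$, $H_A(1) \geq 4$, $H_A(2) \geq 6$ and $\operatorname{len}(A)\leq 14$, one obtains $d_1 \in \{1,2\}$. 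The Macaulay inequality $d_2 \leq d_1(d_1+1)/2$ then makes $d_1 = 1$ impossible (it forces $e \geq 5$, contradicting $H_A(1) \leq 5$) and pins $d_1 = 2$ to $d_2 = 3,\ e = 3,\ f = 0,\ H_A(1) = 5$, giving $\operatorname{len}(A) = 15$, a contradiction.

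For $s = 5$, with $\Delta_0 = (1, d_1, d_2, d_2, d_1, 1)$, $\Delta_1 = (0, e_1, e_2, e_1, 0)$, $\Delta_2 = (0, f, f, 0)$ and $\Delta_3 = (0, g, 0)$, the analogous identity reads $\operatorname{len}(A) = 2 + d_1 + d_2 + e_1 + H_A(1) + H_A(2) \geq H_A(1) + H_A(2) + 4 \geq 14$, with equality forcing $H_A(1) = 4$, $H_A(2) = 6$, $d_1 = d_2 = 1$, $e_1 = 0$. In this remaining configuration Macaulay applied to $\Delta_0 + \Delta_1$ gives $1 + e_2 = (\Delta_0+\Delta_1)(2) \leq 1^{\langle 1\rangle} = 1$, so $e_2 = 0$; then $H_A(2) = 1 + 0 + f = 6$ forces $f = 5$, whence $H_A(1) \geq d_1 + f = 6 > 5$, the desired contradiction. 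The most delicate step is precisely this last one for $s = 5$, where only the combined use of the length identity, the symmetric decomposition and Macaulay's bound on the partial sum $\Delta_0 + \Delta_1$ suffices to rule out the borderline configuration.
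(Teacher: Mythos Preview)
Your proof is correct and follows essentially the same approach as the paper: a case analysis on the socle degree using Iarrobino's symmetric decomposition, Macaulay's growth theorem, and the length bound. The paper dismisses $s=3$ and $s=5$ as ``immediately'' contradictory and only treats $s=4$ in detail; your argument supplies the missing details for $s=5$ (via Macaulay applied to the partial sum $\Delta_0+\Delta_1$), and your $s=4$ computation is a mild rearrangement of the paper's, using the length identity $\operatorname{len}(A)=2+d_1+H_A(1)+H_A(2)$ in place of the paper's direct bound $H_A(3)\leq 2$.
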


\begin{proof}
    Let $s$ be the socle degree of $A$.
    Suppose $H_A(2) \geq 6$. Then $H_{A}(3) + H_{A}(4) + \dots \leq 3$, thus
    $s\in \{3, 4, 5\}$. The cases $s = 3$ and $s = 5$ immediately lead to
    contradiction -- it is impossible to get the required symmetric
    decomposition. We will consider the case $s = 4$. In this case $H_A = (1,
    *, *, *, 1)$ and its symmetric decomposition is $(1, e, q, e, 1) + (0, m,
    m, 0) + (0, t, 0)$.
    Then $e = H_A(3) \leq 14 - 2 - 4 - 6 = 2$.
    Since $H_A(1) < H_A(2)$ we have $e < q$. This can
    only happen if $e = 2$ and $q = 3$. But then $14\geq \len A = 9 + 2m + t$,
    thus $m\leq 2$ and $H_A(2) = m + q \leq 5$. A contradiction.
\end{proof}

\begin{lem}\label{ref:14341notexists:lem}
    There does not exist a local Artin Gorenstein algebra with Hilbert
    function \[(1, 4, 3, 4, 1, \ldots , 1).\]
\end{lem}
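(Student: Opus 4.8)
The plan is to suppose such an algebra $A$ exists, write $s$ for its socle degree (necessarily $s\geq 4$, as the Hilbert function takes the value $1$ at degree $s$ after the value $4$ at degree $3$), and to handle the cases $s=4$ and $s\geq 5$ separately.

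\emph{The case $s=4$.} Apply Iarrobino's symmetric decomposition (Theorem~\ref{ref:Hfdecomposition:thm}), writing $\Dhdvect{0}=(1,a,b,a,1)$, $\Dhdvect{1}=(0,c,c,0)$, $\Dhdvect{2}=(0,d,0)$ and $\Dhdvect{3}=\Dhdvect{4}=0$. The equalities $H_A(3)=a=4$, $H_A(1)=a+c+d=4$ and $H_A(2)=b+c=3$ force $c=d=0$ and $b=3$, hence $\Dhdvect{0}=(1,4,3,4,1)$ while all other rows are zero. By Theorem~\ref{ref:Hfdecomposition:thm}(3), $\Dhdvect{0}$ is the Hilbert function of a graded Artin Gorenstein algebra of socle degree $4$; its defining ideal is $2$-saturated by Remark~\ref{ref:GorensteinSaturated:rmk}, so Lemma~\ref{ref:P1gotzmann:lem}(1) applied with $m=2$ (using $\Dhdvect{0}(2)=3$ and $\Dhdvect{0}(3)=4$) gives $\Dhdvect{0}(1)=2\neq 4$, a contradiction.

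\emph{The case $s\geq 5$.} Now the final string of $1$'s is non-empty, so $c=3$ in the notation of Example~\ref{ref:standardformofstretched:ex}; by that example, together with Remark~\ref{ref:removinglinearpart:remark}, I may take the dual socle generator to be $f=x_1^{s}+g$ with $\deg g\leq 4$, with $g$ having no constant or linear term and $\alpha_1^{3}\hook g=0$ (so $g$ has $x_1$-degree at most $2$); write its quartic part as $g_4=x_1^2 q_2+x_1 q_3+q_4$ with $q_i\in k[x_2,x_3,x_4]$ of degree $i$. A direct computation in the module $S\hook f$ of derivatives of $f$ yields $\mathfrak{m}^{4}\hook f=\langle 1,x_1,\dots,x_1^{s-4}\rangle$ and $\mathfrak{m}^{3}\hook f=\langle 1,x_1,\dots,x_1^{s-3}\rangle+W$, where $W\subseteq\langle x_2,x_3,x_4\rangle$ is spanned by the $x_2,x_3,x_4$-homogeneous components of the $\sigma\hook g_4$ with $\sigma$ of degree $3$; in particular $H_A(3)=1+\dim_k W$, so $H_A(3)=4$ forces $W=\langle x_2,x_3,x_4\rangle$. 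On the other hand $H_A(2)=3$ means the degree-$2$ component $V$ of the ideal of leading forms of $\Dan{f}$ in $k[\alpha_1,\dots,\alpha_4]$ has dimension $7$, and then Macaulay's Growth Theorem (Theorem~\ref{ref:MacaulayGrowth:thm}) together with Gotzmann's Persistence Theorem (Theorem~\ref{ref:Gotzmann:thm}), applied exactly as in the proof of Proposition~\ref{ref:thirdsecant:prop}, shows $V=I(L)_2$ for a line $L\subseteq\mathbb{P}^3=\Proj k[\alpha]$. Any order-$2$ element of $\Dan{f}$ whose leading form had a nonzero $\alpha_1^2$-coefficient would give $\sigma\hook f$ a nonzero top-degree term $x_1^{s-2}$; hence every quadric in $V$ vanishes at $[1:0:0:0]$, and since the intersection of all quadrics through a line is that line, $[1:0:0:0]\in L$. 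A linear change of the variables $x_2,x_3,x_4$ only — which preserves all the normalisations of $f$ — then puts $L=V(\alpha_3,\alpha_4)$.

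It remains to derive the contradiction when $s\geq5$. For each of the seven quadrics $\alpha_1\alpha_3,\alpha_1\alpha_4,\alpha_2\alpha_3,\alpha_2\alpha_4,\alpha_3^2,\alpha_3\alpha_4,\alpha_4^2$ in $V=I(L)_2$ there is an element of $\Dan{f}$ with that leading form, hence $\sigma\hook f\in\mathfrak{m}^{3}\hook f$ for each of them. Comparing degree-$2$ parts — the degree-$2$ part of $\mathfrak{m}^{3}\hook f$ being just $\langle x_1^2\rangle$ — these memberships become $\alpha_3\hook q_2=\alpha_4\hook q_2=0$, $\alpha_3\hook q_3=\alpha_4\hook q_3=0$, and the annihilation of $q_4$ by $\alpha_2\alpha_3,\alpha_2\alpha_4,\alpha_3^2,\alpha_3\alpha_4,\alpha_4^2$; these say $q_2\in\langle x_2^2\rangle$, $q_3\in\langle x_2^3\rangle$ and $q_4\in\langle x_2^4\rangle$ (any degree-$4$ monomial in $x_2,x_3,x_4$ involving $x_3$ or $x_4$ is divisible by one of $x_2x_3$, $x_2x_4$, $x_3^2$, $x_3x_4$, $x_4^2$). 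Thus $g_4=x_2^2(\lambda x_1^2+\mu x_1 x_2+\nu x_2^2)$, whose degree-$3$-operator contractions all lie in $\langle x_2\rangle$, so $\dim_k W\leq 1$, contradicting $W=\langle x_2,x_3,x_4\rangle$. The delicate part of the whole argument is precisely this case $s\geq5$: carrying the Macaulay/Gotzmann analysis through for the non-homogeneous ideal $\Dan{f}$, justifying the coordinate normalisation that fixes $x_1$, and bookkeeping exactly which contractions of $f$ lie in $\mathfrak{m}^{3}\hook f$; the symmetric-decomposition computation for $s=4$ and the compatibility of the successive reductions of $f$ are routine by comparison.
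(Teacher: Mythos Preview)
Your proof is correct and follows essentially the same route as the paper's: reduce to a dual socle generator $f=x_1^s+f_4+\ldots$ in standard form, use the maximal Macaulay growth $H(2)=3\to H(3)=4$ together with Gotzmann to identify the degree-two part of the initial ideal with $I(L)_2$ for a line $L\subseteq\mathbb{P}^3$, show $[1{:}0{:}0{:}0]\in L$, normalise $L=V(\alpha_3,\alpha_4)$, and deduce that the quartic part lies in $k[x_1,x_2]$, which contradicts the embedding dimension. The differences are cosmetic: the paper passes to $x_1^s+f_4$ (dropping $g_{\leq 3}$) and phrases the contradiction as ``$f_4$ is annihilated by a linear form'', whereas you keep $g$ and contradict $W=\langle x_2,x_3,x_4\rangle$; and you separate out the case $s=4$, disposing of it directly via the symmetric decomposition and Lemma~\ref{ref:P1gotzmann:lem}, while the paper's sketch treats all $s$ uniformly (indeed its standing assertion that the leading form is $x_1^s$ is only literally true for $s\geq 5$). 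Your added care in the $s=4$ case and in the bookkeeping of $\mathfrak m^3\hook f$ is welcome and does not change the argument.
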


\begin{proof}
    See \cite[pp.~99-100]{ia94} for the proof or \cite[Lem~5.3]{CJNpoincare} for a generalisation. We provide a sketch for
    completeness.
    Suppose such an algebra $A$ exists and fix its dual socle
    generator $f\in k[x_1, \ldots, x_4]_\Ddegf$ in the standard form. Let $I =
    \Dan{f}$.
    The proof relies on two observations. First, the leading term of $f$ is, up
    to a constant, equal to $x_1^\Ddegf$ and in fact we may take $f = x_1^\Ddegf +
    f_{\leq 4}$. Moreover from the symmetric decomposition it follows that the
    Hilbert functions of  $\Apolar{x_1^s + f_4}$ and $\Apolar{f}$ are equal. Second,
    $h(3) = 4 = 3^{\langle 2\rangle} = h(2)^{\langle 2\rangle}$ is the maximal growth, so arguing similarly as in
    Lemma~\ref{ref:P1gotzmann:lem} we may assume that the degree
    two part $I_2$ of the ideal of $\gr A$ is equal to $((\Dx_3,
    \Dx_4)\DS)_2$. Then any derivative of $\Dx_3\hook f_4$ is a derivative of
    $x_1^s$, i.e. a power of $x_1$. It follows that $\Dx_3\hook f_4$ itself is a
    power of $x_1$; similarly $\Dx_4\hook f_4$ is a power of $x_1$.
    It follows that $f_4\in x_1^3\cdot k[x_1,x_2,x_3,x_4] + k[x_1,
    x_2]$, but then $f_4$ is annihilated by a
    linear form, which contradicts the fact that $f$ is in
    the standard form.
\end{proof}

The following lemmas essentially deal with the limit-reducibility in the case
$(1, 4, 4, 3, 1, 1)$. Here the method is straightforward, but the cost
is that the proof is broken into several cases and quite long.

\begin{lem}\label{ref:144311Hilbfunc:lem}
    Let $f = x_1^5 + f_4$ be a polynomial such that $H_{\Apolar{f}}(2) <
    H_{\Apolar{f_4}}(2)$.
    Let $\DPut{tmpQ}{\mathcal{Q}} = \DS_2 \cap
    \Dan{x_1^5} \subseteq
    \DS_2$. Then $x_1^2\in \DtmpQ f_4$ and $\Dan{f_4}_{2} \subseteq \DtmpQ$.
\end{lem}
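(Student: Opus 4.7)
The plan is to write both $H_{\Apolar{f}}(2)$ and $H_{\Apolar{f_4}}(2)$ as dimensions of concrete subspaces of $\DP_2$, and then to read off the two claims by directly comparing these dimensions.

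First, I will use the contraction pairing from Subsection~\ref{sss:apolarity}: under the perfect pairing between $\DS/\DmmS^{s+1}$ and $\DP_{\leq s}$, the quotient $\Apolar{g}/\mathfrak{m}^{d+1}$ is dual to $\DS g \cap \DP_{\leq d}$, where $\mathfrak{m}$ denotes the maximal ideal of $\Apolar{g}$. Hence
\[
H_{\Apolar{g}}(d) = \dim_k(\DS g \cap \DP_{\leq d}) - \dim_k(\DS g \cap \DP_{\leq d-1}),
\]
i.e.~the dimension of the image of $\DS g \cap \DP_{\leq d}$ in $\DP_d = \DP_{\leq d}/\DP_{\leq d-1}$. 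For $g = f_4$, homogeneity makes this immediate: $H_{\Apolar{f_4}}(2) = \dim_k \DS_2 f_4$, and using the splitting $\DS_2 = \DtmpQ \oplus k\Dx_1^2$ yields $H_{\Apolar{f_4}}(2) = \dim_k\bigl(\DtmpQ f_4 + k(\Dx_1^2 \hook f_4)\bigr)$.

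The main step will be to compute the analogous image $V \subseteq \DP_2$ for $g = f = x_1^5 + f_4$. Decomposing a test operator $\partial = \sum_e \partial_e \in \DS$ into homogeneous pieces and collecting the degree-$5$, $4$, $3$ parts of $\partial \hook f = \partial \hook x_1^5 + \partial \hook f_4$, the condition $\partial \hook f \in \DP_{\leq 2}$ successively forces $\partial_0 = 0$, $\partial_1 \in \langle \Dx_2, \ldots, \Dx_n\rangle$, and the degree-$3$ equation $\partial_2 \hook x_1^5 + \partial_1 \hook f_4 = 0$. Since $\partial_2 \hook x_1^5 = c\,x_1^3$, where $c$ is the $\Dx_1^2$-coefficient of $\partial_2$, this last equation splits into two cases depending on whether $x_1^3$ lies in $\langle \Dx_2, \ldots, \Dx_n\rangle \hook f_4$. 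Meanwhile the degree-$2$ part of $\partial \hook f$ equals $\partial_3 \hook x_1^5 + \partial_2 \hook f_4$; as $\partial_3$ varies, $\partial_3 \hook x_1^5$ sweeps out $k x_1^2$. In the first case $\partial_2$ is unconstrained, so $V = k x_1^2 + \DS_2 f_4$, giving $H_{\Apolar{f}}(2) \geq H_{\Apolar{f_4}}(2)$, contrary to assumption; hence we must be in the other case, in which $c = 0$, i.e.~$\partial_2 \in \DtmpQ$, and $V = k x_1^2 + \DtmpQ f_4$.

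The strict inequality now reads $\dim_k(k x_1^2 + \DtmpQ f_4) < \dim_k\bigl(\DtmpQ f_4 + k(\Dx_1^2 \hook f_4)\bigr)$, which simultaneously forces $x_1^2 \in \DtmpQ f_4$ (claim (a)) and $\Dx_1^2 \hook f_4 \notin \DtmpQ f_4$. For claim (b), any $\partial \in \Dan{f_4}_2$ writes uniquely as $c\,\Dx_1^2 + \partial'$ with $\partial' \in \DtmpQ$; the relation $\partial \hook f_4 = 0$ becomes $c(\Dx_1^2 \hook f_4) = -\partial' \hook f_4 \in \DtmpQ f_4$, so $c = 0$ by the second conclusion above, whence $\partial \in \DtmpQ$. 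The main obstacle is the bookkeeping involved in computing $V$, in particular tracking how the degree-$3$ vanishing condition constrains $\partial_2$; once $V$ is identified, both conclusions follow from elementary linear algebra.
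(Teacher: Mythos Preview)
Your proof is correct. The paper's argument rests on the same underlying observation---that the space $V$ of degree-two partials of $f$ contains both $x_1^2$ (as the image of $\Dx_1^3 \hook f$ in $\DP_2$) and $\mathcal{Q} f_4$ (since $q \hook f = q \hook f_4$ for $q \in \mathcal{Q}$)---but uses only the containment $kx_1^2 + \mathcal{Q} f_4 \subseteq V$ rather than your exact identity. That containment already suffices for both claims by contradiction: if $x_1^2 \notin \mathcal{Q} f_4$ then $\dim V \geq 1 + \dim \mathcal{Q} f_4 \geq H_{\Apolar{f_4}}(2)$, and if $\Dan{f_4}_2 \not\subseteq \mathcal{Q}$ then $\Dx_1^2 \hook f_4 \in \mathcal{Q} f_4$, whence $\mathcal{Q} f_4 = \DS_2 f_4$ and again $\dim V \geq H_{\Apolar{f_4}}(2)$. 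Because only the inclusion is needed, the paper sidesteps your case split on whether $x_1^3 \in \langle \Dx_2, \ldots, \Dx_n\rangle \hook f_4$ and the bookkeeping of the degree-$3$ constraint on $\partial_1$. Your route is longer but yields the exact description of $V$ and the auxiliary fact $\Dx_1^2 \hook f_4 \notin \mathcal{Q} f_4$; the paper's route is a shorter direct contradiction.
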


\begin{proof}
    Note that $\dim \DtmpQ f_4 \geq \dim \DS_2 f_4 - 1 = H_{\Apolar{f_4}}(2) -
    1$. If $\Dan{f_4}_{2} \not\subseteq \DtmpQ$, then there is a $q\in \DtmpQ$
    such that $\Dx_1^2 - q\in \Dan{f_4}$. Then $\DtmpQ f_4 = \DS_2 f_4$ and
    we obtain a contradiction.
    Suppose that $x_1^2\not\in \DtmpQ f_4$. Then the degree
    two partials of $f$ contain a direct sum of $k x_1^2$ and $\DtmpQ f_4$,
    thus they are at least $H_{\Apolar{f_4}}(2)$-dimensional, so that
    $H_{\Apolar{f}}(2)\geq H_{\Apolar{f_4}}(2)$, a
    contradiction.
\end{proof}

\begin{lem}\label{ref:144311caseCI:lem}
    Let $f = x_1^5 + f_4\in \DP$ be a polynomial such that $H_{\Apolar{f}} = (1, 3, 3, 3, 1, 1)$
    and $H_{\Apolar{f_4}} = (1, 3, 4, 3, 1)$. Suppose that $\Dx_1^3\hook f_4 =
    0$ and that $\pp{\Dan{f_4}}_2$ defines a complete intersection. Then
    $\Apolar{f_4}$ and $\Apolar{f}$ are complete intersections.
\end{lem}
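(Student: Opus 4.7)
The plan proceeds by first establishing the conclusion for $\Apolar{f_4}$ and then lifting the structure to $\Apolar{f}$.

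For $\Apolar{f_4}$: By hypothesis, $(\Dan{f_4})_2$ is spanned by two quadrics $q_1, q_2 \in S = k[\alpha_1,\alpha_2,\alpha_3]$ forming a regular sequence. Setting $R := S/(q_1,q_2)$, the Koszul resolution gives $R$ Hilbert function $(1,3,4,4,4,\ldots)$ and makes it a Cohen--Macaulay graded ring of dimension one. The Hilbert function $(1,3,4,3,1)$ of $\Apolar{f_4}$ forces $\dim \Dan{f_4}_3 = 7 = \dim(q_1,q_2)_3 + 1$, so there is a cubic $c \in \Dan{f_4}_3$ whose image in $R_3$ is nonzero. The main step is to show that $c$ is a non-zerodivisor on $R$; granting this, $(q_1,q_2,c)$ is a regular sequence, $S/(q_1,q_2,c)$ is a graded complete intersection with Hilbert series $(1-t^2)^2(1-t^3)/(1-t)^3 = 1 + 3t + 4t^2 + 3t^3 + t^4$ matching that of $\Apolar{f_4}$, and the surjection $S/(q_1,q_2,c) \twoheadrightarrow \Apolar{f_4}$ is an isomorphism by length comparison, so $\Apolar{f_4}$ is a complete intersection of type $(2,2,3)$.

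The main obstacle is the non-zerodivisor claim. My approach combines the Buchsbaum--Eisenbud structure theorem with the hypothesis $\alpha_1^3 \in \Dan{f_4}$. BE forces the minimal number of generators of the codimension-three Gorenstein ideal $\Dan{f_4}$ to be odd; a Hilbert function count pins down exactly two quadratic and exactly one cubic minimal generator, so any additional generators sit in degree $\geq 4$. If $c$ were a zero-divisor on $R$ with one-dimensional annihilator in $R_1$, then $\Dan{f_4}$ would acquire exactly one additional quartic minimal generator, giving four total and contradicting the BE parity. To exclude zero-divisor scenarios with annihilator of dimension $\geq 2$ (where parity alone is consistent), I split into cases: if $\alpha_1^3 \notin (q_1,q_2)$, take $c = \alpha_1^3$ and show $\alpha_1$ is a non-zerodivisor on $R$ using that otherwise the four-point subscheme $V(q_1,q_2) \subset \mathbb{P}^2$ would have $\geq 3$ points on the line $V(\alpha_1)$, which I will rule out using the regular sequence hypothesis together with the BE Betti table. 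If $\alpha_1^3 \in (q_1,q_2)$, then $V(q_1,q_2) \subseteq V(\alpha_1)$ set-theoretically, a very restrictive geometric condition under which one can exhibit a different non-zerodivisor cubic in $\Dan{f_4}_3$ (e.g.~in the model case $q_1 = \alpha_1^2$, $q_2 = \alpha_2^2$ the cubic $c = \alpha_3^3$ works, and the general case reduces to this after a suitable change of coordinates).

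For $\Apolar{f}$: Applying Lemma~\ref{ref:144311Hilbfunc:lem} with $H_{\Apolar{f}}(2) = 3 < 4 = H_{\Apolar{f_4}}(2)$ yields a quadric $\sigma \in \mathcal{Q}$ with $\sigma \hook f_4 \in k \cdot x_1^2 \setminus\{0\}$. Since $\alpha_1^3 \hook x_1^5 = x_1^2$, the non-homogeneous element $\sigma + \lambda\alpha_1^3 \in \Dan{f}$ for a suitable $\lambda \in k$ annihilates $f$, so its leading form $\sigma$ enlarges $(\Dan{f})_2$ to dimension three as required. Together with $q_1, q_2 \in \Dan{f_4} \subseteq \Dan{f}$ and a suitable non-homogeneous lift of $c$, these produce generators of $\Dan{f}$ in $k[[\alpha_1,\alpha_2,\alpha_3]]$, and the complete intersection property follows by a parallel length and Hilbert function comparison, the non-homogeneity of the generators accounting for the shift from socle degree $4$ to $5$.
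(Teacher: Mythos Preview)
Your outline for $\Apolar{f_4}$ has the right ingredients but misapplies them, leading to an unnecessary and incomplete case analysis. You invoke Buchsbaum--Eisenbud, yet use only the \emph{parity} of the number of generators. The full BE symmetry of the Gorenstein resolution (see \cite[Cor~21.16]{EisView}) gives much more: if $I=\Dan{f_4}$ has a minimal generator in degree $a\geq 4$, then by symmetry there is a minimal first syzygy in degree $7-a\leq 3$. But a minimal syzygy in degree $\leq 3$ can involve only the degree-two generators $q_1,q_2$ (any coefficient on a cubic generator would push the degree to $\geq 4$), and $q_1,q_2$ form a regular sequence, so no such syzygy exists. Hence $I$ is generated in degrees $\leq 3$, i.e.\ $I=(q_1,q_2,c)$. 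Since $S/I=\Apolar{f_4}$ is Artinian and $S$ is three-dimensional regular, the three generators automatically form a regular sequence. This is the paper's argument, and it completely bypasses your non-zerodivisor case analysis.

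Your case analysis, as written, has a genuine gap. You claim that when $\alpha_1^3\notin(q_1,q_2)$ one can take $c=\alpha_1^3$ and that $\alpha_1$ is a non-zerodivisor on $R=S/(q_1,q_2)$, arguing that otherwise $V(q_1,q_2)$ would have ``$\geq 3$ points on the line $V(\alpha_1)$''. This is not what being a zerodivisor means here: $\alpha_1$ is a zerodivisor on the one-dimensional CM ring $R$ precisely when $V(\alpha_1)$ contains a component of the degree-four scheme $V(q_1,q_2)\subset\mathbb{P}^2$, which can be a single point. For instance $q_1=\alpha_1\alpha_2$, $q_2=\alpha_1\alpha_3+\alpha_2^2$ is a regular sequence, $\alpha_1^3\notin(q_1,q_2)$, yet $\alpha_1$ kills $\alpha_2$ in $R$. (This particular pair does not arise from an $f_4$ satisfying all hypotheses, but your argument as written does not use enough of the hypotheses to exclude it.) Your subsequent ``model case'' $q_1=\alpha_1^2$ is also impossible here, since Lemma~\ref{ref:144311Hilbfunc:lem} gives $q_1,q_2\in\mathcal{Q}$, so neither can have an $\alpha_1^2$ term.

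For $\Apolar{f}$, the assertion ``$q_1,q_2\in\Dan{f_4}\subseteq\Dan{f}$'' is false: $\Dan{f_4}$ is not contained in $\Dan{f}$ (e.g.\ $\alpha_1^3$ lies in the former but not the latter). What you need, and what the paper uses, is that $q_1,q_2\in\mathcal{Q}$ by Lemma~\ref{ref:144311Hilbfunc:lem}, whence they annihilate $x_1^5$ and therefore $f$. The paper then takes the explicit generator $\alpha_1^3-\sigma$ (this is your ``non-homogeneous lift of $c$'' once one knows $c=\alpha_1^3$) and checks that $(q_1,q_2,\alpha_1^3-\sigma)$ still cuts out a zero-dimensional scheme because the lower-order term $\sigma$ cannot cancel the vanishing pattern of $\alpha_1^3$ on the cone $\Spec S/(q_1,q_2)$; your ``parallel length comparison'' needs this step (or the equivalent observation that leading forms forming a regular sequence forces the original elements to do so).
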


\begin{proof}
    Let $I := \Dan{f_4}$.
    First we will prove that $\Dan{f_4} = (q_1, q_2, c)$, where $\langle q_1,
    q_2\rangle = I_2$ and $c\in I_3$. Then of course $\Apolar{f_4}$ is a complete intersection.
    By assumption, $q_1, q_2$ form a regular sequence. Thus there are no syzygies of
    degree at most three in the minimal resolution of $\Apolar{f_4}$. By
    the symmetry of the minimal resolution, see \cite[Cor 21.16]{EisView},
    there are no generators of degree at least four in the minimal generating
    set of $I$. Thus $I$ is generated in degree two and three. But
    $H_{\DS/(q_1, q_2)}(3) = 4 = H_{\DS/I}(3) + 1$, thus there is a
    cubic $c$, such that $I_3 = kc\oplus (q_1, q_2)_3$, then $(q_1, q_2, c) = I$, thus $\Apolar{f_4} = \DS/I$
    is a complete intersection.

    Let $\DPut{anntwo}{\mathcal{Q}}:= \Dan{x_1^5} \cap S_2 \subseteq S_2$.
    By Lemma~\ref{ref:144311Hilbfunc:lem} we have $q_1, q_2\in \Danntwo$, so
    that $\Dx_1^3\in I \setminus (q_1, q_2)$, then $I = (q_1, q_2, \Dx_1^3)$.
    Moreover, by the same Lemma, there exists $\sigma\in \Danntwo$ such that $\sigma\hook f_4 =
     x_1^2$.

    Now we prove that $\Apolar{f}$ is a complete intersection.
    Let $J := (q_1, q_2, \Dx_1^3 - \sigma) \subseteq \Dan{f}$.
    We will prove that $\DS/J$ is a complete intersection.
    Since $q_1$, $q_2$, $\Dx_1^3$ is a
    regular sequence, the set $\DS/(q_1, q_2)$ is a cone over a scheme of
    dimension zero and $\Dx_1^3$ does not
    vanish identically on any of its components. Since $\sigma$ has degree two, $\Dx_1^3 - \sigma$
    also does not vanish identically on any of the components of $\Spec
    \DS/(q_1, q_2)$, thus $\Spec \DS/J$ has dimension zero,
    so it is a complete intersection (see also \cite[Cor~2.4,
    Rmk~2.5]{Valabrega_FormRings}).
    Then the quotient by $J$ has length at most
    $\deg(q_1)\deg(q_2)\deg(\Dx_1^3 - \sigma) = 12 = \dimk \DS/\Dan{f}$. Since
    $J \subseteq \Dan{f}$, we have $\Dan{f} = J$ and
    $\Apolar{f}$ is a complete intersection.
\end{proof}

\begin{lem}\label{ref:144311casenotCI:lem}
    Let $f = x_1^5 + f_4 + g$, where $\deg g\leq 3$, be a polynomial such that
    $H_{\Apolar{f_{\geq 4}}} = (1, 3, 3, 3, 1, 1)$
    and $H_{\Apolar{f_4}} = (1, 3, 4, 3, 1)$. Suppose that $\Dx_1^3\hook f_4 =
    0$ and that $\pp{\Dan{f_4}}_2$ does not define a complete intersection.
    Then $\Apolar{f}$ is limit-reducible.
\end{lem}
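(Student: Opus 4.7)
The idea is to normalize $f_4$ using the non-complete-intersection hypothesis and then exhibit $\Apolar{f}$ as a limit of reducible algebras via a ray family. After replacing $f$ by an equivalent dual socle generator via an automorphism of $\DS$, we may assume $f$ is in the standard form of Definition~\ref{ref:standardform:def}, so that in particular $f_4, g \in k[x_1, x_2, x_3]$. By Lemma~\ref{ref:144311Hilbfunc:lem}, the two-dimensional space $\pp{\Dan{f_4}}_2$ lies inside $\mathcal{Q} = \DS_2 \cap \Dan{x_1^5}$; since it is not a complete intersection, its two quadric generators share a common linear factor $\ell \in \DS_1$, so $\pp{\Dan{f_4}}_2 = \ell \cdot L$ for some two-dimensional $L \subseteq \DS_1$. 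The cubic $\ell \hook f_4$ is then annihilated by $L$, which forces $\ell \hook f_4 = c \cdot m^3$ for some $c \in k^{\times}$ and linear form $m$.

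Next I would exploit the hypothesis $\Dx_1^3 \hook f_4 = 0$, which forces $f_4 = q_4(x_2, x_3) + x_1 q_3(x_2, x_3) + x_1^2 q_2(x_2, x_3)$ for some $q_i$ homogeneous of degree $i$. Matching $x_1$-degrees in $\ell \hook f_4 = c m^3$ (together with the condition $\ell L \subseteq \mathcal{Q}$) shows $\ell \in \langle \Dx_2, \Dx_3 \rangle$ and $m \in \langle x_2, x_3 \rangle$. A rotation in the $(x_2, x_3)$-plane puts $\ell = \Dx_3$, and depending on whether $\ell \in L$ (equivalently $\ell \hook m = 0$) or not, a further coordinate change reduces $f_4$ to one of the normal forms
\[
f_4 = c_4 x_2^4 + c\, x_2^3 x_3 + c_3 x_1 x_2^3 + c_2 x_1^2 x_2^2 \qquad \text{or} \qquad f_4 = c\, x_3^4 + c_4 x_2^4 + c_3 x_1 x_2^3 + c_2 x_1^2 x_2^2,
\]
with $c \in k^{\times}$.

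To conclude limit-reducibility I would construct a flat ray family. In the second normal form, after absorbing the $x_3$-dependent part of $g$ by replacing $f$ with $f + \delta \hook f$ for suitable $\delta \in \DmmS$ (which preserves $\Apolar{f}$, since $\DS f = \DS((1 + \delta) \hook f)$), we may assume $f = F_0 + c x_3^4$ with $F_0 = x_1^5 + h(x_1, x_2) + g$ independent of $x_3$. Taking $\partial = c\, \Dx_1^5 \in \DmmS$, one verifies $\partial \hook F_0 = c$ and $\partial^2 \hook F_0 = 0$, so $f$ is the $4$-th ray sum of $F_0$ with respect to $\partial$ in the sense of Definition~\ref{ref:raysum:def}. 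Proposition~\ref{ref:raysumflatness:prop} then provides a flat lower ray family over $k[t]$, and Proposition~\ref{ref:fibersofray:prop} (which applies since $\kchar$ does not divide $d - 1 = 3$) identifies its general fiber as a union of $\Spec \Apolar{F_0}$ with three reduced points, manifestly reducible. The first normal form is handled by the analogous construction: one takes $\partial$ to be a scalar multiple of $\Dx_2$ with suitable lower-degree corrections, chosen so that $\partial \hook F_0 = c x_2^3$, realizing $f$ as the $1$-st ray sum of a base polynomial $F_0 \in k[x_1, x_2]$.

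The main obstacle is the absorption of $g$ into the ray sum framework: since $g$ is only constrained by $\deg g \leq 3$, removing its $x_3$-dependent monomials (or, in the first normal form, matching its coefficients to successive $\partial^i \hook F_0$) requires careful use of the available derivatives of $f$, potentially combined with further coordinate changes from Corollary~\ref{ref:semibasicautos:ref}. Should the absorption fail to close up cleanly, one instead constructs an ad hoc flat family directly in $\DS[t]$ and verifies flatness via Proposition~\ref{ref:flatelementary:prop}, following the template of Proposition~\ref{ref:stretchedhavedegenerations:prop}; the explicit form of $\Dan{f}$ coming from the normalization step supplies all the ingredients required for that approach.
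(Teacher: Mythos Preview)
Your normalization of $(\Dan{f_4})_2 = \ell\cdot L$ and the deduction $\ell\in\langle\Dx_2,\Dx_3\rangle$ are correct (the latter needs a short argument: if $\ell$ had an $\Dx_1$-component then $\ell L\subseteq\mathcal{Q}$ forces $L=\langle\Dx_2,\Dx_3\rangle$, whence $\ell\hook f_4\in k[x_1]_3$, yet it also has no $x_1^3$ term since $\Dx_1^3\hook f_4=0$, giving $\ell\hook f_4=0$ and $\ell=0$). However, your case split hides the main point of the lemma. Your first normal form $f_4=c_4x_2^4+cx_2^3x_3+c_3x_1x_2^3+c_2x_1^2x_2^2$ \emph{cannot satisfy the hypothesis} $H_{\Apolar{f_4}}(2)=4$: a direct computation shows the six second-order contractions span at most a $3$-dimensional space (after reducing by $x_2^2$ one is left with $c_2x_1x_2$ and $cx_2x_3+c_3x_1x_2+c_2x_1^2$). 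Thus the case $\ell\in L$ is vacuous, and this is exactly what the paper proves in its Case~2. Your proposed ``$1$-st ray sum'' is not only unnecessary but undefined: Definition~\ref{ref:raysum:def} requires $d\geq 2$.

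For the second normal form your ray-sum idea is sound in spirit but the absorption step---removing \emph{all} $x_3$-dependence from $g$ by adding $\delta\hook f$---does not go through by adding derivatives alone (for instance no derivative of $f$ has $x_3^2x_1$ as a top monomial). One needs a genuine coordinate change as in Lemma~\ref{ref:topdegreetwist:lem}, and even then one only achieves $\Dx_3^2\hook(f-cx_3^4)=0$ rather than full $x_3$-independence. That weaker conclusion is exactly what Proposition~\ref{ref:stretchedhavedegenerations:prop} requires (with $x_3$ playing the role of $x_1$, $s=4$, $c=2$), and this is the route the paper takes: it does not realize $f$ as a ray sum but directly invokes Proposition~\ref{ref:stretchedhavedegenerations:prop} after the reduction. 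So the fix is to drop the first normal form entirely (proving it impossible) and, for the remaining case, aim only for $\Dx_3^2\hook g=0$ and then cite Proposition~\ref{ref:stretchedhavedegenerations:prop}.
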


\begin{proof}
    \def\spann#1{\langle #1 \rangle}%
    Let $\langle q_1, q_2\rangle = \pp{\Dan{f_4}}_2$. Since $q_1, q_2$ do not
    form a regular sequence, we have, after a linear transformation $\varphi$, two
    possibilities: $q_1 = \Dx_1\Dx_2$ and $q_2 = \Dx_1\Dx_3$ or $q_1 =
    \Dx_1^2$ and $q_2 = \Dx_1\Dx_2$. Let $\beta$ be the image of $\Dx_1$ under
    $\varphi$, so that $\beta^3\hook f_4 = 0$.

    Suppose first that $q_1 = \Dx_1\Dx_2$ and $q_2 = \Dx_1\Dx_3$. If $\beta$
    is up to constant equal to $\Dx_1$, then $\Dx_1\Dx_2, \Dx_1\Dx_3,
    \Dx_1^3\in \Dan{f_4}$, so that $\Dx_1^2$ is in the socle of
    $\Apolar{f_4}$, a contradiction. Thus we may assume, after another change
    of variables, that $\beta = \Dx_2$, $q_1 = \Dx_1\Dx_2$ and $q_2 = \Dx_1\Dx_3$.
    Then $f = x_2^5 + f_4 + \hat{g} = x_2^5 + x_1^4 + \hat{h} + \hat{g}$,
    where $\hat{h}\in k[x_1, x_3]$ and $\deg(\hat{g})\leq 3$. Then by
    Lemma~\ref{ref:topdegreetwist:lem} we may assume that $\Dx_1^2\hook f =
    0$, so $\Apolar{f}$ is limit-reducible by
    Proposition~\ref{ref:stretchedhavedegenerations:prop}. See also
    Example~\ref{ref:quarticlimitreducible:example} (the degree
    assumption in the Example can easily be modified).

    Suppose now that $q_1 = \Dx_1^2$ and $q_2 = \Dx_1\Dx_2$. If
    $\beta$ is not a linear combination of $\Dx_1, \Dx_2$, then we may assume $\beta
    = \Dx_3$. Let $m$ in $f_4$ be any monomial divisible by $x_1$. Since $q_1,
    q_2\in \Dan{f_4}$, we see that $m =\lambda x_1x_3^3$ for some $\lambda\in
    k$. But since $\beta^3\in
    \Dan{f_4}$, we have $m = 0$. Thus $f_4$ does not contain $x_1$, so
    $H_{\Apolar{f_4}}(1) < 3$, a contradiction. Thus $\beta\in \langle \Dx_1,
    \Dx_2\rangle$. Suppose $\beta = \lambda\Dx_1$ for
    some $\lambda\in k \setminus \{0\}$.
    Applying Lemma~\ref{ref:144311Hilbfunc:lem} to $f_{\geq 4}$ we see
    that $x_1^2$ is a derivative of $f_4$, so $\beta^2\hook f_4\neq 0$, but
    $\beta^2\hook f_4 = \lambda^2q_1\hook f_4 = 0$, a contradiction. Thus
    $\beta = \lambda_1 \Dx_1 + \lambda_2 \Dx_2$ and changing $\Dx_2$ we
    may assume that $\beta = \Dx_2$. This substitution does not change $\langle \Dx_1^2,
    \Dx_1\Dx_2\rangle$. Now we directly check that $f_4 =
    x_3^2(\kappa_1 x_1x_3 + \kappa_2 x_2^2 + \kappa_3 x_2x_3 + \kappa_4
    x_3^2)$, for some $\kappa_{\bullet}\in k$. Since $x_1$ is a derivative
    of $f$, we have $\kappa_1\neq 0$. Then a non-zero element
    $\kappa_2\Dx_1\Dx_3 - \kappa_1\Dx_2^2$ annihilates $f_4$. A contradiction
    with $H_{\Apolar{f_4}}(2) = 4$.
\end{proof}

\begin{lem}\label{ref:144311addingpartial:lem}
    Let a quartic $f_4$ be such that $H_{\Apolar{f_4}} = (1, 3, 3, 3, 1)$ and $\Dx_1^3\hook f_4 =
    0$. Then $H_{\Apolar{x_1^5 + f_4}}(2) \geq 4$.
\end{lem}

\begin{proof}
    \def\spann#1{\langle #1 \rangle}%
    Let $\DPut{anntwo}{\mathcal{Q}} = \Dan{x_1^5}_2 \subseteq \DS_2$.
    Let $I$ denote the apolar ideal of $f_4$.
    By Proposition~\ref{ref:thirdsecant:prop} we see that $I$ is minimally
    generated by three elements of degree two and two elements of degree four.
    In particular, there are no cubics in the generating set.
    Since $\Dx_1^3\in I_3$, there is an element in $\sigma\in I_2$ such that
    $\sigma = \Dx_1^2 - q$, where $q\in \Danntwo$. Therefore $\Danntwo\hook
    f_4 = \DS_2\hook f_4$.
    Moreover, $\sigma$ does not annihilate $x_1^2$, so that $x_1^2$ is not a
    partial of $f_4$.
    We see that $x_1^2$ and $\Danntwo\hook f_4$ are leading forms of partials of
    $x_1^5 + f_4$, thus
    \[H_{\Apolar{x_1^5 + f_4}}(2) \geq 1 + \dim(\Danntwo\hook f_4) = 1 +
        \dim(\DS_2\hook f_4) = 1 +
    H_{\Apolar{f_4}}(2) = 4.\qedhere\]
\end{proof}

\begin{remark}
    In the setting of Lemma~\ref{ref:144311addingpartial:lem}, it is not hard
    to deduce that $H_{\Apolar{x_1^5 + f_4}} = (1, 3, 4, 3, 1, 1)$ by
    analysing the possible symmetric decompositions. We do not need this
    stronger statement, so we omit the proof.
\end{remark}

\subsection{Proofs}

The following Proposition~\ref{ref:mainthmthree:prop} generalises results about algebras with Hilbert function
$(1, 5, 5, 1)$, obtained in \cite{JJ1551} and \cite{bertone2012division}.

\begin{prop}\label{ref:mainthmthree:prop}
    Let $A$ be a local Artin Gorenstein algebra of socle degree three and
    ${H_A(2)\leq 5}$. Then $A$ is smoothable.
\end{prop}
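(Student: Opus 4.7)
The plan is to combine the symmetric decomposition of the Hilbert function with the ``square-peeling'' degeneration from Example~\ref{ref:squareadding:example} to reduce the problem to cubic forms in few variables, and then dispatch those cases.

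First, set $n := H_A(1)$ and $e := H_A(2) \leq 5$. Iarrobino's decomposition (Theorem~\ref{ref:Hfdecomposition:thm}) forces $H_A = \Dhdvect{0} + \Dhdvect{1} = (1,e,e,1) + (0,n-e,0)$, with all higher rows zero by the length constraint; in particular $n \geq e$. Since $\Dhdvect{s-2} = \Dhdvect{1} = (0,n-e,0)$, Proposition~\ref{ref:squares:prop} supplies a standard-form dual socle generator $f = \tilde f + \sum_{i=e+1}^{n} x_i^2$ with $\tilde f \in k[x_1,\ldots,x_e]$ of degree $3$, the squared variables not appearing in $\tilde f$. Peeling the squares one at a time via Example~\ref{ref:squareadding:example}, together with transitivity of flat limits and the smoothability of products (Subsection~\ref{sss:smoothability}), reduces the problem to showing that $\Apolar{\tilde f}$ is smoothable, where $H_{\Apolar{\tilde f}} = (1,e,e,1)$. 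Since this forces $H_{\Apolar{\tilde f}}(2) = H_{\Apolar{(\tilde f)_3}}(2) = e$, the lower-degree parts of $\tilde f$ must lie in the image $\DmmS \hook (\tilde f)_3$ and hence can be absorbed by modifying $\tilde f$ by a derivative; we may thus assume $\tilde f$ is a homogeneous cubic in $e$ variables.

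Second, for $e \leq 3$ the claim is immediate from \cite[Prop~2.5]{cn09}, as $H_{\Apolar{\tilde f}}(1) = e \leq 3$. For $e = 4$ and $e = 5$ the strategy is to classify homogeneous cubics with Hilbert function $(1,e,e,1)$ up to the action of $\mathrm{GL}_e$ (the classification in both cases being short), and for each class to exhibit either a ray sum decomposition (Definition~\ref{ref:raysum:def}) with smoothable building blocks, invoking Corollary~\ref{ref:smoothabilityofrayiff:cor}, or a direct flat degeneration to a reducible scheme. The unobstructed algebras of Examples~\ref{ref:1441:example} and~\ref{ref:1551:example} serve as anchor points, confirming that each relevant irreducible component of the parameter space contains a smoothable (and indeed unobstructed) point.

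The hard part is the case $e = 5$: unlike $e \leq 4$, the parameter space of cubics with $H_{\Apolar{\tilde f}} = (1,5,5,1)$ is known to have several irreducible components, so no single ray sum or linear change of variables works uniformly, and one must treat each component separately. This is precisely the content of \cite{JJ1551}, and the role of Proposition~\ref{ref:mainthmthree:prop} is to package its result, together with the easier cases $e \leq 4$, into the single uniform criterion ``$H_A(2) \leq 5$''.
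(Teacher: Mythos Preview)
Your reduction via Proposition~\ref{ref:squares:prop} and Example~\ref{ref:squareadding:example} to the case $H_B=(1,e,e,1)$ with $e\leq 5$ matches the paper exactly, as does the appeal to \cite[Prop~2.5]{cn09} for $e\leq 3$. The divergence is in how you handle $e=4,5$, and there is a genuine misconception there.

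You assert that for $e=5$ the parameter space of cubics with $H_{\Apolar{\tilde f}}=(1,5,5,1)$ has several irreducible components, forcing a component-by-component treatment (or a wholesale citation of \cite{JJ1551}). This is false: the set of $f\in k[x_1,\dots,x_e]_{\leq 3}$ with $H_{\Apolar{f}}=(1,e,e,1)$ is precisely the locus where the apolar algebra attains the maximal possible length $2e+2$, hence by Remark~\ref{ref:semicontinuity:rmk} it is \emph{open} in the irreducible affine space $P_{\leq 3}$ and therefore irreducible (this is also noted in \cite[Thm~I, p.~350]{iaCompressed}). The same holds for $e=4$. Consequently no classification is needed at all: a single smoothable, unobstructed point in each locus suffices, and Examples~\ref{ref:1441:example} and~\ref{ref:1551:example} supply exactly such points. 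This is the paper's argument, and it is both shorter and fully self-contained.

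Two further remarks. First, your proposed classification of cubics in $4$ or $5$ variables up to $\mathrm{GL}$ is not finite---cubic surfaces and cubic threefolds have positive-dimensional moduli---so the phrase ``the classification in both cases being short'' is optimistic. Second, the step where you reduce $\tilde f$ to a homogeneous cubic by absorbing $f_{\leq 2}$ as a derivative is not quite right as stated: not every quadric in $e$ variables is a derivative of $f_3$ (the space of quadratic derivatives has dimension $e$, not $\binom{e+1}{2}$). This step is in any case unnecessary, since the irreducibility argument above applies directly to the full space $P_{\leq 3}$.
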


\begin{proof}
    Suppose that the Hilbert function of $A$ is $(1, n, e, 1)$.
    By Proposition \ref{ref:squares:prop} the dual socle generator of $A$ may
    be put in the form $f + x_{e+1}^2 + \dots + x_n^2$, where $f\in
    k[x_1,\dots,x_e]$. By repeated use of
    Example~\ref{ref:squareadding:example} we see that
    $A$ is a limit of algebras of the form $\Apolar{f} \times k^{\oplus n-e}$.
    Thus it is smoothable if and only if $B = \Apolar{f}$ is.

    Let $e:=H_A(2)$, then $H_B = (1, e, e, 1)$.
    If $H_B(1) = e \leq 3$ then $B$ is smoothable. It remains to consider $4\leq e\leq 5$.
    The set of points corresponding to algebras with Hilbert function $(1, e,
    e, 1)$ is irreducible in $\Hilb{e}{2e+2}$ by
    Remark~\ref{ref:semicontinuity:rmk} for obvious parameterisation (as mentioned in
    \cite[Thm~I, p.~350]{iaCompressed}), thus it will be enough to find a
    smooth point in this set which corresponds to a smoothable
    algebra.
    The cases $e = 4$ and
    $e = 5$ are considered in Example~\ref{ref:1441:example} and Example~\ref{ref:1551:example} respectively.
\end{proof}

\begin{remark}
    The claim of Proposition~\ref{ref:mainthmthree:prop} holds true if we
    replace the assumption ${H_A(2) \leq 5}$ by $H_A(2) = 7$, thanks to the
    smoothability of local Artin Gorenstein algebras with Hilbert function
    $(1, 7, 7, 1)$, see~\cite{bertone2012division}. We will not use this
    result.
\end{remark}

\begin{lem}\label{ref:14521case:lem}
    Let $A$ be a local Artin Gorenstein algebra with Hilbert function $H_A$
    beginning with $H_A(0) = 1$,
    $H_A(1) = 4$, $H_A(2) = 5$, $H_A(3) \leq 2$. Then $A$ is smoothable.
\end{lem}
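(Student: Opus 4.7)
The plan is to enumerate the admissible Hilbert functions $H_A$ via Iarrobino's symmetric decomposition (Theorem~\ref{ref:Hfdecomposition:thm}) together with the bounded growth from Remark~\ref{ref:MacaulaysBoundedGrowth:rmk}, and then reduce each case to one already handled. Since $H_A(3)\leq 2<3$, Remark~\ref{ref:MacaulaysBoundedGrowth:rmk} forces $H_A(m)\leq 2$ and $H_A(m)\geq H_A(m+1)$ for every $m\geq 3$. Using the symmetry of each $\Delta_{A,i}$, the fact that $\Delta_{A,0}$ is the Hilbert function of a graded Gorenstein algebra, and Macaulay's constraints on the partial sums $\sum_{i\leq a}\Delta_{A,i}$, I would first rule out $s\leq 3$ and then enumerate: for $s=4$ only $H_A=(1,4,5,2,1)$ with $\Delta_{A,2}=(0,2,0)$, and for $s=5$ only $(1,4,5,2,1,1)$ or $(1,4,5,2,2,1)$, both with $\Delta_{A,s-2}=(0,0,0)$.

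For $s=4$, I would invoke Proposition~\ref{ref:squares:prop} to take a dual socle generator in the form $f=g(x_1,x_2)+x_3^2+x_4^2$ with $g\in k[x_1,x_2]$, and then Example~\ref{ref:squareadding:example} realises $A$ as a flat limit of $B\times k$ where $B=\Apolar{g(x_1,x_2)+x_3^2}$ has $H_B(1)=3$; such $B$ is smoothable by~\cite[Cor~2.6]{cn09}, so $A$ is smoothable. For socle degree $s\geq 6$ with $H_A$ ending in a long enough tail of $1$'s (so $s\geq 2c$ with $c$ the length of the non-stretched part), Example~\ref{ref:stretched:example} produces a flat degeneration whose non-trivial fibre has the same initial Hilbert function values and socle degree $s-1$, and induction on $s$ concludes.

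The hard part is $s=5$, since $\Delta_{A,s-2}=(0,0,0)$ gives no squares to peel in Proposition~\ref{ref:squares:prop}, and $s<2c$ prevents the stretched reduction of Example~\ref{ref:stretched:example}. Concentrating on $H_A=(1,4,5,2,1,1)$ (the length-$14$ case, with $(1,4,5,2,2,1)$ being analogous), I would put the dual socle generator in the standard form $f=x_1^5+f_4(x_1,x_2)+f_3(x_1,\dots,x_4)+f_2(x_1,\dots,x_4)$ via Proposition~\ref{ref:existsstandardform:prop}, using $e(1)=2$ and $e(2)=4$, so that $x_3,x_4$ enter only through $f_3$ and $f_2$. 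I would then construct a ray family (Definition~\ref{ref:rayfamily:def}) in one of the variables $x_3,x_4$, modelled on Proposition~\ref{ref:raysumflatness:prop}, whose general fiber splits off a smoothable $k$-point and leaves behind an algebra of strictly smaller embedding dimension; iterating should eventually yield $\Apolar{x_1^5+f_4}$, a complete intersection in $k[x_1,x_2]$ and hence smoothable by the discussion of Subsection~\ref{sss:smoothability}.

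The main technical hurdle will be proving flatness of this ray family via Proposition~\ref{ref:flatelementary:prop}, since $f$ is not strictly in the form required by Proposition~\ref{ref:raysumflatness:prop} or Proposition~\ref{ref:stretchedhavedegenerations:prop}. I would choose a ray decomposition $\Dan{f}=J+(\Dx_i^\nu-q)\DS$ with $i\in\{3,4\}$ adapted to the standard form above, verify the flatness criterion by a direct computation patterned on the proof of Proposition~\ref{ref:raysumflatness:prop} and the tricks in Remark~\ref{ref:flatnessremovet:remark}, and then identify the general fiber as in Remark~\ref{ref:fibers:rmk}. Once this is in place, assembling the three cases gives the lemma.
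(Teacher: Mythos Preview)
Your enumeration contains an error and a gap that break the case-by-case reduction. For $s=4$ with $H_A=(1,4,5,2,1)$, the symmetric decomposition is forced to be $\Delta_0=(1,2,3,2,1)$, $\Delta_1=(0,2,2,0)$, $\Delta_2=(0,0,0)$ (check: $\Delta_0(3)=H_A(3)=2$ and then $H_A(1)=4=2+\Delta_1(1)+\Delta_2(1)$ with $\Delta_1(1)=\Delta_1(2)=H_A(2)-\Delta_0(2)=2$ forces $\Delta_2(1)=0$). So Proposition~\ref{ref:squares:prop} does not apply and you cannot peel a square. For $s\geq 6$, Example~\ref{ref:stretched:example} requires $s\geq 2c$, but for $H_A=(1,4,5,2,\ldots,2,1)$ with the $2$'s extending to position $s-1$ one has $c=s-1$ and $s<2c$; such Hilbert functions do occur with $\Delta_{s-2}=0$ (e.g.\ $s=8$, $\Delta_0=(1,2,\ldots,2,1)$, $\Delta_4=(0,0,1,0,0)$, $\Delta_5=(0,2,2,0)$), so your induction does not start. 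Finally, in the $s=5$ case you leave the flatness of the proposed ray family as an unperformed computation; without it the argument is incomplete, and there is no guarantee that the generic fibre has the shape you want.

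The paper proceeds quite differently and avoids all of this case analysis. After disposing of $\Delta_{s-2}\neq 0$ uniformly via Example~\ref{ref:squareadding:example}, it observes that in the remaining cases the standard form forces $f_{\geq 4}\in k[x_1,x_2]$ (since $e(s-4)=2$). Fixing $f_{\geq 4}$, the set of $f=f_{\geq 4}+g$ with $H_{\Apolar{f}}=H_A$ is irreducible by Example~\ref{ref:semicondegthree:example}. It then suffices to exhibit one smoothable, \emph{unobstructed} point in this set: the paper takes $h=f_{\geq 4}+x_3^2x_1+x_4^2x_3$, which is a double ray sum over the complete intersection $\Apolar{f_{\geq 4}}$ and is unobstructed by Proposition~\ref{ref:unobstructeddoubleray:prop} (Example~\ref{ref:1551:example}). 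Irreducibility plus a smooth point in the smoothable component then gives smoothability for every $f$ in the family. The key idea you are missing is this ``irreducible locus $+$ one tangent-preserving ray sum'' technique, which replaces the need to degenerate each $f$ individually.
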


\begin{proof}
    Let $f$ be a dual socle generator of $A$ in the standard form. From
    Macaulay's Growth Theorem it follows that $H_A(m) \leq 2$ for all $m\geq 3$,
    so that $H_A = (1, 4, 5, 2, 2,  \ldots , 2, 1,  \ldots , 1)$. Let $s$ be
    the socle degree of $A$.

    Let $\Dhdvect{A, s-2} = (0, q, 0)$ be the $(s-2)$-nd row of the symmetric
    decomposition of $H_A$. If $q > 0$, then by
    Example~\ref{ref:squareadding:example} we know that $A$ is limit-reducible; it
    is a limit of algebras of the form $B \times k$, such that $H_{B}(1) =
    H_A(1) - 1 = 3$. Then the algebra $B$ is smoothable
    (see~\cite[Prop~2.5]{cn09}),
    so $A$ is also smoothable. In the following we assume that $q = 0$.

    We claim that $\DPut{ffour}{f_{\geq 4}} \in k[x_1, x_2]$. Indeed, the symmetric
    decomposition of the Hilbert function is either $(1, 1,  \ldots , 1) + (0,
    1,  \ldots , 1, 0) + (0, 0, 1, 0, 0) + (0, 2, 2, 0)$ or $(1, 2,  \ldots ,
    2, 1) + (0, 0, 1, 0, 0) + (0, 2, 2, 0)$. In particular $e(s-3) =
    \sum_{i\geq 3} \Dhd{i}{1} = 2$, so that $\Dffour \in k[x_1,
    x_2]$ and $H_{\Apolar{\Dffour}}(1) = 2$, in particular $x_1$ is a derivative of $\Dffour $, i.e.~there exist
    a $\partial\in \DS$ such that $\partial\hook \Dffour  = x_1$. Then we may
    assume $\partial\in \DmmS^3$, so $\partial^2\hook f = 0$.

    Let us fix $\Dffour $ and consider the set of all polynomials of the
    form $h = \Dffour  + g$, where $g\in k[x_1, x_2, x_3, x_4]$ has degree at
    most three. By Example~\ref{ref:semicondegthree:example} the apolar
    algebra of a general such polynomial will have
    Hilbert function $H_A$. The set of polynomials $h$ with fixed
    $h_{\geq 4} = \Dffour $, such that $H_{\Apolar{h}} = H_A$, is irreducible.
    This set contains $h := \Dffour  + x_3^2x_1 + x_4^2x_3$. To finish the
    proof is it enough to show that $h$ is smoothable and unobstructed. Since
    $\Apolar{\Dffour }$ is a complete intersection, this
    follows from Example~\ref{ref:1551:example}.
\end{proof}

The following Theorem~\ref{ref:mainthmstretchedfive:thm} generalises numerous
earlier smoothability results on stretched (by Sally, see~\cite{SallyStretchedGorenstein}),
$2$-stretched (by Casnati and Notari, see \cite{CN2stretched}) and almost-stretched (by Elias and Valla, see
\cite{EliasVallaAlmostStretched}) algebras. It is important to understand
that, in contrast with the mentioned papers, we avoid a full classification of
algebras. In the course of the proof we give some partial classification.

\begin{thm}\label{ref:mainthmstretchedfive:thm}
    Let $A$ be a local Artin Gorenstein algebra with Hilbert function $H_A$ satisfying
    $H_A(2) \leq 5$ and $H_{A}(3)\leq 2$. Then $A$ is smoothable.
\end{thm}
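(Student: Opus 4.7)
The proof is by induction on $\dimk A$, combining a reduction via squares with a case analysis by socle degree. First, if $\Dhdvect{A, s-2} = (0, q, 0)$ with $q > 0$, where $s$ denotes the socle degree of $A$, then Proposition~\ref{ref:squares:prop} together with Example~\ref{ref:squareadding:example} expresses $A$ as a flat limit of $A' \times k$ with $\dimk A' < \dimk A$, and the hypothesis $H(2) \leq 5$, $H(3) \leq 2$ is preserved by passing from $A$ to $A'$. Induction gives smoothability of $A'$, hence of $A$. I may therefore assume $\Dhdvect{A, s-2} = 0$ for the rest of the argument.

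Next, I handle low socle degree. For $s \leq 2$ the claim is trivial; for $s = 3$ it is precisely Proposition~\ref{ref:mainthmthree:prop} (applicable since $H_A(2) \leq 5$). For $s \geq 4$, Remark~\ref{ref:MacaulaysBoundedGrowth:rmk} combined with $H_A(3) \leq 2$ forces the shape $H_A = (1, H_1, H_2, 2, \ldots, 2, 1, \ldots, 1)$, with some number $a \geq 0$ of twos followed by $b \geq 1$ ones (including $H_A(s) = 1$). When $a = 0$, the algebra is stretched after degree two, so Example~\ref{ref:stretched:example} applies with $c = 2$ and $s \geq 2c$, producing a flat degeneration of $A$ to $A' \sqcup \Spec k$, where $A'$ has the same $H_1, H_2$ and socle degree $s - 1$; induction closes this case.

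The remaining case $a \geq 1$ is the technical heart. Using $H_A(s-1) = \Dhdvect{0}(1) \leq 2$ and propagating through the symmetric decomposition under $\Dhdvect{A,s-2} = 0$ and $H_A(2) \leq 5$, only a short list of Hilbert functions survives: on the one hand $(1, 4, 5, 2, \ldots, 2, 1, \ldots, 1)$, which is exactly Lemma~\ref{ref:14521case:lem}; on the other hand, when $\Dhdvect{0}(1) = 2$ and hence $\Dhdvect{0} = (1, 2, \ldots, 2, 1)$, matching entries of the decomposition forces $H_1 = H_2 \in \{2, 3, 4, 5\}$ and $H_A = (1, h, h, 2, \ldots, 2, 1)$. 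For $h \leq 3$ smoothability is \cite[Prop~2.5]{cn09}, while for $h \in \{4,5\}$ I would mimic the proof of Lemma~\ref{ref:14521case:lem}: put the dual socle generator in standard form, invoke irreducibility of the relevant parameter space (Subsection~\ref{sss:parameterising}), and exhibit a smoothable unobstructed point constructed via ray sums, in the spirit of Examples~\ref{ref:1441:example} and~\ref{ref:1551:example}. The main obstacle is this last step: verifying unobstructedness of the chosen point without direct tangent-space computation requires the tangent-preserving ray-family machinery of Subsection~\ref{subsec:tangentpreserving}, in particular Corollary~\ref{ref:CIarenonobstructed:cor}, applied to a carefully chosen pair $(f, \partial)$ realizing the required Hilbert function.
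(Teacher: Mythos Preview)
Your reduction via $\Delta_{s-2}$ and the socle-degree-three case match the paper, and your $a = 0$ case via Example~\ref{ref:stretched:example} is correct. The problems are in the $a \geq 1$ case.

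First, the case analysis is incomplete. The claim that $\Delta_0(1) = 2$ forces $H_1 = H_2$ is false: the decomposition $(1,2,\ldots,2,1) + (0,0,c,0,0) + (0,d,d,0)$ gives $H_2 = H_1 + c$ with $c$ possibly positive. More seriously, you never treat $\Delta_0(1) = 1$ with $a \geq 1$, which does occur: the decomposition $(1,\ldots,1) + (0,1,\ldots,1,0) + (0,d,d,0)$ yields $H_A = (1,h,h,2,\ldots,2,1,1)$ with two trailing ones, and for $h \in \{4,5\}$ this is covered neither by Lemma~\ref{ref:14521case:lem} nor by your $(1,h,h,2,\ldots,2,1)$ family.

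Second, your plan for $(1,h,h,2,\ldots,2,1)$ with $h \in \{4,5\}$ is to exhibit a single unobstructed point via ray sums, and you rightly flag this as unresolved. The paper takes a different route that avoids this difficulty entirely. In each of the remaining decomposition shapes one has $e(s-3) \leq 2$, so in standard form $f_{\geq 4} \in k[x_1, x_2]$. Proposition~\ref{ref:irreducibleintwovariables:prop} then gives irreducibility of the two-variable locus and shows that a general member has, after a change of coordinates, $f_{\geq 4} = x_1^s + x_2^{s_2}$. Lemma~\ref{ref:topdegreetwist:lem} arranges $\alpha_1^2 \hook g = 0$ for the lower-degree part $g$, and Proposition~\ref{ref:stretchedhavedegenerations:prop} (with $c = 2$, $s \geq 4$) then peels off a point: the general fiber is $B' \times k$ with $B' = \Apolar{\lambda^{-1}x_1^{s-1} + x_2^{s_2} + g}$, and one checks $H_{B'}(2) \leq 5$, $H_{B'}(3) \leq 2$, so induction on length applies. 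This single inductive step handles all the cases with $f_{\geq 4}$ in two variables uniformly, with no need to locate an unobstructed point.
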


\begin{proof}
    We proceed by induction on $\len A$, the case $\len A = 1$ being trivial.
    If $A$ has socle degree three, then the result follows from Proposition
    \ref{ref:mainthmthree:prop}. Suppose that $A$ has socle degree $s\geq 4$.

    Let $f$ be a dual socle generator of $A$ in the standard form.
    If the symmetric decomposition of
    $H_A$ has a term $\Dhdvect{s-2} = (0, q, 0)$ with $q\neq 0$, then by
    Example~\ref{ref:squareadding:example}, we have that $A$ is a limit of
    algebras of the form $B \times k$, where $B$ satisfies the assumptions
    $H_B(2) \leq 5$ and $H_B(2) \leq 2$ on the Hilbert function. Then $B$ is
    smoothable by induction, so also $A$ is smoothable. Further in the proof
    we assume that $\Dhdvect{A, s-2} = (0, 0, 0)$.

    We would like to understand the symmetric decomposition of the Hilbert
    function $H_A$ of $A$. Since $H_A$ satisfies the Macaulay growth
    condition (see Subsection \ref{ref:MacGrowth:sss}) it follows that $H_A =
    (1, n, m, 2, 2, \dots, 2, 1, \dots, 1)$, where the number of ``$2$'' is
    possibly zero. If follows that the possible symmetric decompositions
    of the Hilbert function are
    \begin{enumerate}
        \item $(1, 2, 2,  \ldots , 2, 1) + (0, 0, 1, 0, 0) + (0, n-3, n-3, 0)$,
        \item $(1, 1, 1 \ldots , 1, 1) + (0, 1, 1, \ldots , 1, 0) + (0, 0, 1,
            0, 0) + (0, n-3, n-3, 0)$,
        \item $(1, 1, 1 \ldots , 1, 1) + (0, 1, 2, 1, 0) + (0, n-3, n-3, 0)$,
        \item $(1,  \ldots , 1) + (0, n-1, n-1, 0)$,
        \item $(1, 2, \ldots ,2, 1) + (0, n-2, n-2, 0)$,
        \item $(1,  \ldots , 1) + (0,1,  \ldots , 1, 0) + (0, n-2, n-2, 0)$,
    \end{enumerate}
    and that the decomposition is uniquely determined by the Hilbert function.
    In all cases we have $H_A(1)\leq H_A(2)\leq 5$, so $f\in
    k[x_1, \ldots ,x_5]$.
    Let us analyse the first three cases. In each of them we have $H_A(2) = H_A(1) + 1$. If $H_A(1) \leq
    3$, then $A$ is smoothable, see \cite[Cor 2.4]{cn09}. Suppose $H_A(1) \geq
    4$. Since $H_A(2) \leq 5$,
    we have $H_A(2) = 5$ and $H_A(1) = 4$. In this case the result follows from
    Lemma~\ref{ref:14521case:lem} above.

    It remains to analyse the three remaining cases. The proof is similar to
    the proof of Lemma~\ref{ref:14521case:lem}, however here it
    essentially depends on induction.
    Let $\DPut{ffour}{f_{\geq 4}}$ be the sum of homogeneous components of $f$ which have
    degree at least four. Since $f$ is in the standard form, we have
    $\Dffour\in k[x_1, x_2]$. The decomposition of the Hilbert function $\Apolar{\Dffour}$ is
    one of the decompositions $(1, \ldots ,1)$, $(1,2 \ldots ,2,1)$, $(1, \ldots ,1) + (0, 1,  \ldots
    , 1, 0)$, depending on the decomposition of the Hilbert function of $\Apolar{f}$.

    Let us fix a vector $\hat{h} = (1, 2, 2, 2,  \ldots , 2, 1, 1, \ldots , 1)
    $ and take the sets
    \[
        V_1 := \left\{ f\in k[x_1, x_2]\ |\ H_{\Apolar{f}} = \hat{h}\right\}\mbox{
        and } V_2 := \left\{ f\in k[x_1, \ldots ,x_n]\ |\ f_{\geq 4}\in V_1 \right\}.
    \]
    By Proposition~\ref{ref:irreducibleintwovariables:prop} the set $V_1$ is
    irreducible and thus $V_2$ is also irreducible.
    The Hilbert function of the apolar algebra of a general member of $V_2$ is,
    by Example~\ref{ref:semicondegthree:example}, equal to $H_A$. It remains
    to show that the apolar algebra of this general member is
    smoothable.

    Proposition~\ref{ref:irreducibleintwovariables:prop} implies that the general
    member of $V_2$ has (after a nonlinear change of coordinates) the form
    $f + \partial\hook f$, where $f = x_1^{s} + x_2^{s_2} + g$ for some $g$ of
    degree at most three.  Using Lemma \ref{ref:topdegreetwist:lem} we may assume (after another
    nonlinear change of coordinates) that $\Dx_1^2\hook g = 0$.

    Let $B := \Apolar{x_1^{s} + x_2^{s_2} + g}$. We will show that $B$ is
    smoothable.
    Since $s \geq 4 = 2\cdot 2$
    Proposition~\ref{ref:stretchedhavedegenerations:prop} shows that $B$
    is limit-reducible. Analysing the fibers of
    the resulting degeneration, as
    in Example~\ref{ref:stretched:example}, we see that they have the form
    $B'\times k$, where $B' = \Apolar{\hat{f}}$ and $\hat{f} = \lambda^{-1}x_1^{s-1} + x_2^{s_2} + g$.
    Then $H_{B'}(3) = H_{\Apolar{\hat{f}_{\geq 4}}}(3) \leq 2$. Moreover,
    $\hat{f}\in k[x_1, \ldots ,x_5]$, so that $H_{B'}(1) \leq 5$. Now
    analysing the possible symmetric decompositions of $H_{B'}$, which are
    listed above, we see that $H_{B'}(2)\leq H_{B'}(1) = 5$.
    It follows from induction on the length that $B'$ is smoothable, thus
    $B'\times k$ and $B$ are smoothable.
\end{proof}

\begin{prop}\label{ref:mainthmsfour:prop}
    Let $A$ be a local Artin Gorenstein algebra of socle degree four satisfying $\len
    A\leq 14$. Then $A$ is smoothable.
\end{prop}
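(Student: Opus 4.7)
My plan is induction on $\len A$, the case $\len A = 1$ being trivial. At socle degree $s = 4$, Iarrobino's decomposition (Theorem~\ref{ref:Hfdecomposition:thm}) reads $H_A = \Dhdvect{0} + \Dhdvect{1} + \Dhdvect{2}$ with $\Dhdvect{0} = (1, a, b, a, 1)$, $\Dhdvect{1} = (0, c, c, 0)$, $\Dhdvect{2} = (0, q, 0)$. If $q > 0$, then Example~\ref{ref:squareadding:example} (via Proposition~\ref{ref:squares:prop}) exhibits $A$ as a flat limit of $B \times k$ with $\len B = \len A - 1$ and $B$ still of socle degree $4$, and the induction hypothesis applies. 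So I may assume $\Dhdvect{2} = 0$ and $H_A = (1, a+c, b+c, a, 1)$.

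Next I dispose of the easy cases. If $H_A(1) \leq 3$, then $A$ is smoothable by \cite[Prop~2.5]{cn09}. If both $H_A(2) \leq 5$ and $H_A(3) \leq 2$, Theorem~\ref{ref:mainthmstretchedfive:thm} applies. Combining the length bound $H_A(1) + H_A(2) + H_A(3) \leq 12$ with Lemma~\ref{ref:trikofHilbFunc:lem} (which forces $H_A(2) \leq 5$ whenever $H_A(1) \in \{4, 5\}$), the Macaulay growth constraints on $\Dhdvect{0} = (1, a, b, a, 1)$, and Lemma~\ref{ref:14341notexists:lem} (excluding $(1, 4, 3, 4, 1)$), a finite enumeration leaves exactly three Hilbert functions requiring individual treatment:
\[
(1, 4, 4, 3, 1),\qquad (1, 4, 5, 3, 1),\qquad (1, 4, 4, 4, 1).
\]

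For $H_A = (1, 4, 4, 4, 1)$, Proposition~\ref{ref:fourthsecant:prop} together with Remark~\ref{ref:semicontinuity:rmk} shows that the locus of such algebras is irreducible with generic leading form $\ell_1^4 + \ell_2^4 + \ell_3^4 + \ell_4^4$. I specialise to $f = x_1^4 + x_2^4 + x_3^4 + x_4^4$ and realise it as an iterated ray sum starting from the complete intersection $\Apolar{x_1^4 + x_2^4}$: the fourth ray sum with respect to $\Dx_1^4$ (which acts as the constant $1$) yields $x_1^4 + x_2^4 + x_3^4$, whose apolar algebra is unobstructed by Corollary~\ref{ref:CIarenonobstructed:cor}; a second fourth ray sum of this polynomial with respect to $\Dx_1^4$ yields $f$, and smoothability of $\Apolar{f}$ follows from Corollary~\ref{ref:smoothabilityofrayiff:cor} (using $\kchar \nmid 3$). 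Unobstructedness at this stage I expect to establish via the tangent-preserving flatness results of Subsection~\ref{subsec:tangentpreserving}, after which irreducibility of the stratum together with the existence of a smoothable, unobstructed representative forces the entire stratum into $\HilbGorGen$.

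For the remaining two Hilbert functions, the standard form has $f_4 \in k[x_1, x_2, x_3]$ realising $\Dhdvect{0}$ and an extra variable $x_4$ introduced by $\Dhdvect{1}(1) = 1$. Proposition~\ref{ref:thirdsecant:prop} classifies $f_4$ for $\Dhdvect{0} = (1, 3, 3, 3, 1)$, and for $\Dhdvect{0} = (1, 3, 4, 3, 1)$ one inspects $(\Dan{f_4})_2$ in the style of Lemmas~\ref{ref:144311caseCI:lem} and \ref{ref:144311casenotCI:lem}: either $\Apolar{f_4}$ is a complete intersection, which lifts via the extra ray variable to $\Apolar{f}$ being a complete intersection (hence smoothable and unobstructed by Subsection~\ref{sss:smoothability}), or else a ray degeneration from Proposition~\ref{ref:stretchedhavedegenerations:prop} writes $A$ as a flat limit of algebras of strictly smaller length, invoking the induction hypothesis. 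The main obstacle throughout this last step is the bookkeeping required to carry the hypotheses of Theorem~\ref{ref:nonobstructedconds:thm} (in particular its condition~4) through each specific ray decomposition, and then to combine the resulting unobstructed smoothable representatives with the irreducibility of each stratum (via Remark~\ref{ref:semicontinuity:rmk}) so as to sweep the whole stratum into $\HilbGorGen$.
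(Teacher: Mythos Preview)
Your induction skeleton, the reduction to $\Dhdvect{2}=(0,0,0)$ via Example~\ref{ref:squareadding:example}, and the final list of three Hilbert functions all agree with the paper. The issues are in how you handle those three cases.

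\textbf{The $(1,4,4,4,1)$ case.} Your second application of Corollary~\ref{ref:CIarenonobstructed:cor} fails: $\Apolar{x_1^4+x_2^4+x_3^4}$ is \emph{not} a complete intersection (its annihilator has the five minimal generators $\Dx_1\Dx_2,\Dx_1\Dx_3,\Dx_2\Dx_3,\Dx_1^4-\Dx_2^4,\Dx_2^4-\Dx_3^4$), so the corollary gives you no information about condition~4 of Theorem~\ref{ref:nonobstructedconds:thm} at the second step. The sentence ``I expect to establish via the tangent-preserving flatness results'' is exactly the missing content; without it you only get smoothability of the single point $x_1^4+\cdots+x_4^4$ (via Corollary~\ref{ref:smoothabilityofrayiff:cor}), which does not propagate to the whole irreducible stratum unless that point is also unobstructed. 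The paper sidesteps this entirely: since the general $f_4$ is $\ell_1^4+\cdots+\ell_4^4$ by Proposition~\ref{ref:fourthsecant:prop}, Example~\ref{ref:quarticlimitreducible:example} shows the general $A$ is limit-reducible, hence (by induction) smoothable; closedness of the smoothable locus then covers the stratum with no unobstructedness needed.

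\textbf{The $(1,4,4,3,1)$ and $(1,4,5,3,1)$ cases.} Lemmas~\ref{ref:144311caseCI:lem} and~\ref{ref:144311casenotCI:lem} are written for $f=x_1^5+f_4+g$ at socle degree five, not four, and their conclusions do not transfer directly; in particular ``$\Apolar{f_4}$ is a complete intersection'' does not make $\Apolar{f}$ one, since the degree~$\leq 3$ part of $f$ is unconstrained. Your fallback to Proposition~\ref{ref:stretchedhavedegenerations:prop} requires $f_4$ to split off a fourth power, which is generic for $(1,3,3,3,1)$ (Proposition~\ref{ref:thirdsecant:prop}) but fails for the generic ternary quartic with $H=(1,3,4,3,1)$. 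The paper handles $(1,4,4,3,1)$ exactly as it does $(1,4,4,4,1)$, via Example~\ref{ref:quarticlimitreducible:example}, and handles $(1,4,5,3,1)$ by exhibiting the specific unobstructed smoothable point $x_1^2x_2x_3+x_4^2x_1$ of Example~\ref{ref:14531case:example}. A minor point: at socle degree four, Lemma~\ref{ref:14341notexists:lem} is not the right tool to exclude $(1,4,3,4,1)$; the paper instead invokes Stanley's inequality $n\leq m$ for $\Dhdvect{0}=(1,n,m,n,1)$.
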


\begin{proof}
    We proceed by induction on the length of $A$. Then by
    Proposition~\ref{ref:mainthmthree:prop} (and the fact that all algebras of
    socle degree at most two are smoothable) we may assume that all algebras
    of socle degree \emph{at most} four and length less than $\len A$ are
    smoothable.

    If $\Dhdvect{A,
    1} = (0, q, 0)$ with $q\neq 0$, then by
    Example~\ref{ref:squareadding:example} the algebra $A$ is a limit of
    algebras of the form $A'
    \times k$, where $A'$ has socle degree four. Hence $A$ is
    smoothable. Therefore we assume $q = 0$. Then $H_A(1) \leq 5$ by Lemma
    \ref{ref:hilbertfunc:lem}. Moreover, we may assume $H_A(1) \geq 4$ since
    otherwise $A$ is smoothable by \cite[Cor 2.4]{cn09}.

    The symmetric
    decomposition of $H_A$ is $(1, n, m, n, 1) + (0, p, p, 0)$ for some $n, m,
    p$. By the fact that $n\leq 5$ and Stanley's result \cite[p.
    67]{StanleyCombinatoricsAndCommutative} we have $n\leq m$, thus $n\leq 4$
    and $H_A(2) \leq H_A(1)\leq 5$.
    Due to $\len A \leq 14$ we have four cases: $n = 1$, $2$, $3$,
    $4$ and five possible shapes of Hilbert functions: $H_A = (1, *, *, 1, 1)$,
    $H_A = (1, *, *, 2, 1)$, $H_A = (1, 4, 4, 3, 1)$, $H_A = (1, 4, 4, 4,
    1)$, $H_A = (1, 4, 5, 3, 1)$.

    The conclusion in the first two cases follows from Theorem
    \ref{ref:mainthmstretchedfive:thm}.
    In the remaining cases we first look for a suitable irreducible set of
    dual socle generators parameterising algebras with prescribed $H_A$.
    We examine the case $H_A = (1, 4, 4, 3,
    1)$. We claim that the set of $f\in \DP = k[x_1, x_2, x_3, x_4]$ in the
    standard form, which are generators of algebras with
    Hilbert function $H_A$ is irreducible. Since the leading form $f_4$ of
    such $f$ has Hilbert function $(1, 3, 3, 3, 1)$, the set of possible
    leading forms is irreducible by Proposition~\ref{ref:thirdsecant:prop}.
    Then the irreducibility follows from
    Example~\ref{ref:semicondegthree:example}. The irreducibility in the cases
    $H_A = (1, 4, 4, 4, 1)$ and $H_A = (1, 4, 5, 3, 1)$ follows similarly from
    Proposition~\ref{ref:fourthsecant:prop} together with
    Example~\ref{ref:semicondegthree:example}.
    In the first two cases we see that $f_4$ is a sum of powers of variables,
    then Example~\ref{ref:quarticlimitreducible:example} shows
    that the apolar algebra $A$ of a general $f$ is limit-reducible. More
    precisely,
    $A$ is limit of algebras of the form $A' \times k$, where $A'$ has socle
    degree at most four (compare Example~\ref{ref:stretched:example}). Then $A$ is smoothable.
    In the last case Example~\ref{ref:14531case:example} gives an unobstructed
    algebra in this irreducible set.
    This completes the proof.
\end{proof}

\begin{lem}\label{ref:144311final:lem}
    Let $A$ be a local Artin Gorenstein algebra with Hilbert function $(1, 4, 4, 3,
    1, 1)$. Then $A$ is limit-reducible.
\end{lem}

\begin{proof}
    Let $s = 5$ be the socle degree of $A$.  If $\Dhdvect{A, s-2}\neq (0,
    0, 0)$ then $A$ is limit-reducible by
    Example~\ref{ref:squareadding:example}, so we assume $\Dhdvect{A, s-2} =
(0, 0, 0)$.
    The only possible symmetric decomposition of the Hilbert function $H_A$ with $\Dhdvect{A,
    s-2} = (0, 0, 0)$ is
    \begin{equation}\label{eq:hfdecompositionprim}
        (1, 4, 4, 3, 1, 1) = (1, 1, 1, 1, 1, 1) + (0, 2, 2, 2, 0) + (0, 1, 1,
        0).
    \end{equation}
    Let us take a dual socle generator $f$ of $A$. We assume that $f$ is in
    the standard form: $f = x_1^5 + f_4 + g$, where $\deg g\leq 3$. Then
        $H_{\Apolar{x_1^5 + f_4}} = (1, 3, 3, 3, 1, 1)$. We analyse
    the possible Hilbert functions of $B = \Apolar{f_4}$. By Lemma~\ref{ref:topdegreetwist:lem} we
    may assume that $\Dx_1^3\hook f_4 = 0$. Suppose first that
    $H_{B}(1) \leq 2$. From \eqref{eq:hfdecompositionprim} it follows that
    $H_{\Apolar{x_1^5+f_4}}(1) = 3$, so that $H_B(1) = 2$ and we may assume that
    $f_4\in k[x_2, x_3]$. Then by
    Lemma~\ref{ref:topdegreetwist:lem} we may further assume $\Dx_1^2\hook (f
    - x_1^5) = 0$, then Proposition~\ref{ref:stretchedhavedegenerations:prop}
    asserts that $A = \Apolar{f}$ is limit-reducible.

    Suppose now that $H_B(1) = 3$. Since $x_1^5$ is annihilated by a codimension
    one space of quadrics, we have $H_B(2) \leq H_A(2) + 1$, so there are two
    possibilities: $H_B = (1, 3, 3, 3, 1)$ or $H_B = (1,
    3, 4, 3, 1)$. By Lemma~\ref{ref:144311addingpartial:lem} the case
$H_B = (1, 3, 3, 3, 1)$ is not possible, so that $H_B = (1, 3, 4, 3, 1)$. Now by Lemma~\ref{ref:144311casenotCI:lem} we may
    consider only the case when $\pp{\Dan{f_4}}_2$ is a complete
    intersection, then by Lemma~\ref{ref:144311caseCI:lem} we have that
    $\Apolar{x_1^5 + f_4}$ is a complete intersection. In this case we
    will actually prove that $A$ is smoothable.

    By Example~\ref{ref:semicondegthree:example} the set $W$ of
    polynomials $f$ with fixed leading polynomial $f_{\geq 4}$
    and Hilbert function $H_{\Apolar{f}} = (1, 4, 4, 3, 1, 1)$ is irreducible. Consider
        the apolar algebra $B$ of the polynomial $x_1^5 + f_4 +
        x_4^2x_1\in W$. By Proposition~\ref{ref:fibersofray:prop}, this algebra is
        the limit of smoothable algebras $\Apolar{x_1^5 + f_4} \times \Apolar{x_1}$,
        thus it is smoothable.
    By Corollary~\ref{ref:CIarenonobstructed:cor} the algebra $B$ is
        unobstructed. Thus apolar algebra of every element of $W$ is
    smoothable; in particular $A$ is smoothable.
\end{proof}

Now we are ready to prove Theorem~\ref{ref:mainthmsmoothable} which is the
algebraic counterpart of Theorems~\ref{ref:mainthm13degree}
and~\ref{ref:mainthm14degree}.

\begin{thm}\label{ref:mainthmsmoothable}
    Let $A$ be an Artin Gorenstein algebra of length at most $14$.
    Then either $A$ is smoothable or
    it is local with Hilbert function $(1, 6, 6, 1)$. In particular, if $A$ has length
    at most $13$, then $A$ is smoothable.
\end{thm}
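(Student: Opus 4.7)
\medskip
\noindent\textbf{Plan of proof.} First we reduce to the local case: any Artin algebra splits as a product of local Artin Gorenstein algebras, and such a product is smoothable if and only if each factor is (Subsection~\ref{sss:smoothability}). Then we proceed by induction on $\len A$, splitting into cases according to the socle degree $s$. The cases $s \leq 2$ are immediate (complete intersection). For $s = 3$ the symmetric decomposition of Theorem~\ref{ref:Hfdecomposition:thm} forces $H_A(1) \geq H_A(2)$, so either $H_A(2) \leq 5$ and Proposition~\ref{ref:mainthmthree:prop} applies, or $H_A(1) + H_A(2) \geq 12$, which combined with $\len A \leq 14$ pins $H_A$ down to the excluded $(1, 6, 6, 1)$. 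The case $s = 4$ is exactly Proposition~\ref{ref:mainthmsfour:prop}.

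The core work is the case $s \geq 5$. Here the key dichotomy comes from the bottom row $\Dhdvect{A, s-2}$ of Iarrobino's symmetric decomposition. If $\Dhdvect{A, s-2} = (0, q, 0)$ with $q > 0$, then by Proposition~\ref{ref:squares:prop} and Example~\ref{ref:squareadding:example} the algebra $A$ is a flat limit of $A' \times k$ with $\len A' = \len A - 1$, smoothable by induction. If instead $\Dhdvect{A, s-2} = 0$, then Lemma~\ref{ref:hilbertfunc:lem} yields $\len A > 2(H_A(1) + 1)$ strictly (because $s > 3$), forcing $H_A(1) \leq 5$. For $H_A(1) \leq 3$ the algebra is smoothable by~\cite[Prop~2.5]{cn09}; for $H_A(1) \in \{4, 5\}$, Lemma~\ref{ref:trikofHilbFunc:lem} gives $H_A(2) \leq 5$, and if moreover $H_A(3) \leq 2$ then Theorem~\ref{ref:mainthmstretchedfive:thm} closes the argument.

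What remains is a short list of Hilbert functions with $s \geq 5$, $H_A(1) \in \{4,5\}$, $H_A(2) \leq 5$ and $H_A(3) \geq 3$. Enumerating these via Macaulay's Growth Theorem, Remark~\ref{ref:MacaulaysBoundedGrowth:rmk}, the non-existence Lemma~\ref{ref:14341notexists:lem} (which rules out $(1,4,3,4,1,\ldots)$), the symmetric decomposition constraints, and the inductive hypothesis wherever the $\Dhdvect{s-2}$-trick applies, the principal obstacle reduces to the single function $H_A = (1, 4, 4, 3, 1, 1)$. For this, we normalise the dual socle generator (via Example~\ref{ref:standardformofstretched:ex} and Lemma~\ref{ref:topdegreetwist:lem}) to $f = x_1^5 + f_4 + g$ with $\Dx_1^3 \hook f_4 = 0$ and $\deg g \leq 3$, and then dichotomise on whether the quadric annihilator $(\Dan{f_4})_2$ defines a complete intersection: Lemma~\ref{ref:144311caseCI:lem} shows $\Apolar{f}$ is itself a complete intersection (hence smoothable) in one subcase, and Lemma~\ref{ref:144311casenotCI:lem} shows $\Apolar{f}$ is limit-reducible (hence smoothable by induction) in the other, with Lemma~\ref{ref:144311addingpartial:lem} providing the deformation of $f_4$ needed to put $H_{\Apolar{f_4}}$ in the form $(1,3,4,3,1)$ assumed by the first two lemmas.

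The hard part is precisely this residual $(1, 4, 4, 3, 1, 1)$ case: neither the square-adding degeneration nor the stretched-algebra theorem applies, and the Hilbert-function analysis alone offers no further leverage, so one is forced to work directly on the geometry of the leading form $f_4 \in k[x_1, x_2, x_3]_4$ using the classification from Proposition~\ref{ref:thirdsecant:prop}. This is exactly what motivates the chain of preparatory lemmas immediately preceding the theorem; once they are in hand, the theorem itself is obtained by patching the cases together as above.
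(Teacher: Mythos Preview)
Your overall architecture matches the paper's proof closely, and the reduction steps through socle degree, the $\Dhdvect{A,s-2}$ dichotomy, Lemma~\ref{ref:hilbertfunc:lem}, Lemma~\ref{ref:trikofHilbFunc:lem}, Theorem~\ref{ref:mainthmstretchedfive:thm}, and the enumeration down to $H_A = (1,4,4,3,1,1)$ are all correct.

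The gap is in your handling of the residual case $H_A = (1,4,4,3,1,1)$, specifically the complete-intersection subcase. You write that Lemma~\ref{ref:144311caseCI:lem} shows ``$\Apolar{f}$ is itself a complete intersection''. It does not: that lemma applies only to a polynomial of the shape $x_1^5 + f_4$ with $H_{\Apolar{x_1^5+f_4}} = (1,3,3,3,1,1)$, and its conclusion is that this length-$12$ algebra in three variables is a complete intersection. Your $f = x_1^5 + f_4 + g$ has $H_{\Apolar{f}} = (1,4,4,3,1,1)$, embedding dimension $4$, and is in general \emph{not} a complete intersection; the lemma says nothing about it. So in the CI subcase your argument stops one step short.

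The paper fills this step as follows. Once Lemma~\ref{ref:144311caseCI:lem} gives that $\Apolar{x_1^5 + f_4}$ is a complete intersection, one fixes $f_{\geq 4} = x_1^5 + f_4$ and observes via Example~\ref{ref:semicondegthree:example} that the locus of polynomials $h$ with $h_{\geq 4} = f_{\geq 4}$ and $H_{\Apolar{h}} = (1,4,4,3,1,1)$ is irreducible. Then Corollary~\ref{ref:CIarenonobstructed:cor} (the tangent-preserving ray machinery, using precisely the CI property of $\Apolar{x_1^5+f_4}$) exhibits the specific point $x_1^5 + f_4 + x_4^2 x_1$ as a smooth, smoothable point of the Hilbert scheme lying in this irreducible locus. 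Hence the entire locus lies in the smoothable component, and in particular the original $\Apolar{f}$ is smoothable. This irreducibility-plus-smooth-point manoeuvre is essential and is missing from your outline.

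A secondary omission: before invoking Lemmas~\ref{ref:144311caseCI:lem}--\ref{ref:144311addingpartial:lem} one must first dispose of the subcase $H_{\Apolar{f_4}}(1) \leq 2$, since those lemmas require $H_{\Apolar{f_4}} = (1,3,*,3,1)$. The paper handles this separately: when $H_{\Apolar{f_4}}(1) = 2$ one can take $f_4 \in k[x_2,x_3]$, then use Lemma~\ref{ref:topdegreetwist:lem} to arrange $\Dx_1^2 \hook (f - x_1^5) = 0$, after which Proposition~\ref{ref:stretchedhavedegenerations:prop} gives limit-reducibility directly.
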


\begin{proof}
    By the discussion in Section~\ref{sss:smoothability} it is enough to consider
local algebras.
    \def\Dh#1{H(#1)}%
    Let $A$ be a local algebra of length at most $14$ and of socle degree $s$.
    By $H$ we denote the Hilbert function of $A$.
    As mentioned in Subsection~\ref{sss:smoothability} it is enough to prove
    $A$ is limit-reducible.
    On the contrary, suppose that $A$ is strongly non-smoothable in the sense of Definition
    \ref{ref:limitreducible:def}. By Example~\ref{ref:squareadding:example} we
    have $\Dhdvect{A, s-2} = (0, 0, 0)$. Then by Lemma \ref{ref:hilbertfunc:lem} we see that either $H = (1,
    6, 6, 1)$ or $\Dh{1} \leq 5$. It is enough to consider $\Dh{1}\leq 5$.
    If $s = 3$ then $\Dh{2} \leq \Dh{1} \leq 5$, so by Proposition
    \ref{ref:mainthmthree:prop} we may assume $s > 3$. By Proposition
    \ref{ref:mainthmsfour:prop} it follows that
    we may consider only $s\geq 5$.

    If $\Dh{1}\leq 3$ then $A$ is smoothable by \cite[Cor 2.4]{cn09}, thus we
    may assume $\Dh{1} \geq 4$. By Lemma \ref{ref:trikofHilbFunc:lem} we
    see that $\Dh{2} \leq 5$. Then by Theorem
    \ref{ref:mainthmstretchedfive:thm} we may reduce to the
    case $\Dh{3} \geq 3$. By Macaulay's Growth Theorem we have $\Dh{2} \geq 3$.
    Then $\sum_{i>3} \Dh{i} \leq 14 - 11$, so we are left with several
    possibilities: $H = (1, 4, 3, 3, 1, 1, 1)$, $H = (1, 4, 3, 3, 2, 1)$ or
    $H = (1, *, *, *, 1, 1)$.
    In the first two cases it follows from the symmetric decomposition that
    $\Dhdvect{A, s-2} \neq (0, 0, 0)$ which is a contradiction. We examine the
    last case.
    By Lemma \ref{ref:14341notexists:lem} there does not exist an algebra with Hilbert function $(1, 4, 3,
    4, 1, 1)$. Thus the only possibilities are $(1, 4, 3, 3, 1, 1)$,
    $(1, 5, 3, 3, 1, 1)$ and $(1, 4, 4, 3, 1, 1)$. Once more, it can be
    checked directly
    that in the first two cases $\Dhdvect{A, s-2} \neq (0, 0, 0)$. The
    last case is the content of Lemma~\ref{ref:144311final:lem}.\qedhere
\end{proof}

\begin{remark}\label{ref:1661:rmk}
    Assume $\kchar = 0$.
    In \cite{emsalem_iarrobino_small_tangent_space} Emsalem and Iarrobino
    analysed the tangent space to the Hilbert scheme.
    Iarrobino and Kanev claim that using Macaulay they are able to check
        that the tangent space to $\Hilb{14}{6}$ has dimension
    $76$ at a point corresponding to a general local Gorenstein algebra $A$ with Hilbert
    function $(1, 6, 6, 1)$, see \cite[Lem~6.21]{iakanev}, see also \cite{cn10}
    for further details.  Since $76 < (1 + 6 + 6 + 1) \cdot 6$ this shows
    that $A$  is non-smoothable. Moreover, since all algebras of degree at most
    $13$ are smoothable, $A$  is strongly non-smoothable.
\end{remark}

To prove Theorem~\ref{ref:mainthm14degree}, we need to show that the
non-smoothable part of $\HilbGor_{14} \mathbb{P}^n$ (for $n\geq 6$) is
irreducible. The algebraic version of (a generalisation of) this statement is the following
lemma.
\begin{lem}\label{ref:1661irreducibility:lem}
    Let $n\geq m$ be natural numbers and $V \subseteq \DP_{\leq 3} = k[x_1,
    \ldots ,x_n]_{\leq 3}$ be the set of $f\in \DP$ such
    that $H_{\Apolar{f}} = (1, m, m, 1)$. Then $V$ is constructible
    and irreducible.
\end{lem}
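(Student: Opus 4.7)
The plan is to exhibit $V$ as a vector bundle over an irreducible base $V_3\subseteq \DP_3$ parameterising the cubic leading forms of elements of $V$. For constructibility, $H_{\Apolar{f}}(i)$ equals the rank of the contraction map $\DS_i\to \DP$, $\alpha\mapsto \alpha\hook f$, so $V$ is cut out in $\DP_{\leq 3}$ by rank conditions on catalecticants and is therefore constructible.

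Since $(1,m,m,1)$ has socle degree three, every $f\in V$ has $\deg f = 3$, so we may write $f = f_3 + f_2 + f_1 + f_0$ with $f_3\neq 0$. By Iarrobino's decomposition (Theorem~\ref{ref:Hfdecomposition:thm}) one has $H_{\Apolar{f}} = \Dhdvect{0} + \Dhdvect{1}$ where $\Dhdvect{0} = H_{\Apolar{f_3}}$ is the symmetric Hilbert function of the graded Gorenstein algebra $\Apolar{f_3}$ and $\Dhdvect{1}=(0,q,0)$ for some $q\geq 0$. Forcing the sum to equal $(1,m,m,1)$ pins down $\Dhdvect{0}=(1,m,m,1)$ and $q=0$. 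Set $V_3 := \{g\in \DP_3 : H_{\Apolar{g}}=(1,m,m,1)\}$; then $V_3$ is the $GL_n$-orbit of the locus $U\subseteq k[x_1,\ldots,x_m]_3$ of cubics essentially depending on all $m$ variables. Since $U$ is a nonempty (by $n\geq m$ and $x_1^3+\cdots+x_m^3\in U$) open subset of an affine space, hence irreducible, $V_3$ is the image of the irreducible variety $GL_n\times U$ under the action morphism, so $V_3$ is irreducible.

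Next I analyse the projection $\pi : V\to V_3$ sending $f$ to its leading form $f_3$. For $f_3\in V_3$ set $K(f_3) := \ker\bigl(\DS_1\to \DP_2,\ \alpha\mapsto \alpha\hook f_3\bigr)$, a subspace of dimension $n-m$. Comparing top-degree terms shows $\ker(\alpha\mapsto \alpha\hook f)\subseteq K(f_3)$, hence $H_{\Apolar{f}}(1)\geq m$ with equality precisely when $\alpha\hook(f_2+f_1)=0$ for every $\alpha\in K(f_3)$. These are linear conditions on $(f_2,f_1,f_0)$; after a linear change of variables so that $K(f_3)$ is spanned by $\Dx_{m+1},\ldots,\Dx_n$, the fibre of $\pi$ over $f_3$ becomes $k[x_1,\ldots,x_m]_2\oplus \langle x_1,\ldots,x_m\rangle\oplus k$, a linear subspace of $\DP_{\leq 2}$ of constant dimension $\binom{m+1}{2}+m+1$.

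Therefore $V$ identifies with a vector subbundle of the trivial bundle $V_3\times \DP_{\leq 2}$ over the irreducible base $V_3$, and irreducibility of $V$ follows. The only slightly delicate step is extracting from Iarrobino's theorem the constraint that $H_{\Apolar{f_3}}=(1,m,m,1)$ whenever $f\in V$, which uses the symmetry of graded Gorenstein Hilbert functions to force $\Dhdvect{1}=0$; the rest is a direct fibre-dimension count.
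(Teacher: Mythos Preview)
Your overall strategy—reducing to the irreducibility of $V_3$ and then analysing the fibres of $\pi:V\to V_3$—is the same as the paper's, and your argument for the irreducibility of $V_3$ via the $GL_n$-action is correct and somewhat more direct than the paper's Grassmannian trick. However, your description of the fibres of $\pi$ is wrong, and this is a genuine gap.

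The error is in the sentence ``$H_{\Apolar{f}}(1)\geq m$ with equality precisely when $\alpha\hook(f_2+f_1)=0$ for every $\alpha\in K(f_3)$.'' You are computing $n-\dim\{\alpha\in S_1:\alpha\hook f=0\}$, but this is \emph{not} $H_{\Apolar{f}}(1)$ when $f$ is inhomogeneous. The Hilbert function value $H_A(1)=\dim\mathfrak m_A/\mathfrak m_A^2$ equals $n$ minus the dimension of the space of linear forms $\ell$ for which there exists $q\in\mathfrak m_S^2$ with $(\ell+q)\hook f=0$. Working this out degree by degree, the correct fibre condition is: for every $\alpha\in K(f_3)$, the linear form $\alpha\hook f_2$ lies in $S_2\hook f_3$ (with no condition on $f_1$ or $f_0$). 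This is exactly the paper's criterion ``$\alpha\hook f_{\leq 2}$ is a partial of $f_3$''. A concrete counterexample to your condition: with $m=1$, $n=2$, $f=x_1^3+x_1x_2$ has $H_{\Apolar{f}}=(1,1,1,1)$ since $(\alpha_2-\alpha_1^2)\hook f=0$, yet $\alpha_2\hook(x_1x_2)=x_1\neq 0$.

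Consequently the set you describe is a \emph{proper} closed subset of $V$ (your fibres have dimension $\binom{m+1}{2}+m+1$, whereas the true fibres have dimension $\binom{m+1}{2}+m(n-m)+n+1$), so its irreducibility does not imply that of $V$. Once you replace your condition by the correct one, note that $K(f_3)=W^\perp$ and the fibre condition depend only on $W=S_2\hook f_3$, which is what makes the family locally trivial; the paper makes this explicit by passing to the incidence variety over $\operatorname{Gr}(m,n)$.
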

\begin{proof}
    Let $V_{gr} = V \cap \DP_{3}$ denote
    the set of \emph{graded} algebras with Hilbert function $(1, m, m, 1)$.
    This is a constructible subset of $\DP_3$.
    To an element $f_3\in V_{gr}$ we may associate the tangent space to
    $\Apolar{f_3}$, which is isomorphic to $S_2\hook f_3$. We
    define
    \[\{(f_3, [W])\in V_{gr} \times
        \operatorname{Gr}(m, n) \ |\ W \supseteq S_2\hook f_3\},\]
        which is an open subset in a vector bundle $\{(f_3, [W])\in \DP_{3} \times
        \operatorname{Gr}(m, n) \ |\ W \supseteq S_2\hook f_3\}$ over
        $\operatorname{Gr}(m, n)$, given by the condition $\dim S_2\hook f_3 \geq m$.
    Let $f\in V$ and write it as $f = f_3 + f_{\leq 2}$,
    where $\deg f_{\leq 2} \leq 2$. Then $H_{\Apolar{f_3}} = (1, m, m, 1)$.
    Therefore we obtain a morphism $\varphi:V\to V_{gr}$ sending $f$ to $f_3$.
    We will analyse its fibers. Let $f_3\in V_{gr}$ and $f = f_3 + f_{\leq
    2}\in \DP_{\leq 3}$,
    where $\deg f_{\leq 2}\leq 2$. Then $H_{\Apolar{f}} = (1, M, m, 1)$ for
    some $M\geq m$. Moreover $M = m$ if and only if $\alpha\hook f_{\leq 2}$
    is a partial of $f_3$ for every $\alpha$ annihilating $f_3$.  The
    fiber of $\varphi$ over $f$ is an affine subspace of $P_{\leq 2}$ defined by these conditions
    and the morphism
    \[
        \{(f = f_3 + f_{\leq 2}, [W])\in V \times
        \operatorname{Gr}(m, n) \ |\ W \supseteq S_2\hook f_3\}\to \{(f_3, [W])\in V_{gr} \times
        \operatorname{Gr}(m, n) \ |\ W \supseteq S_2\hook f_3\}
    \]
    is a projection from a vector bundle, which is thus irreducible. Since $V$
    admits a surjection from this bundle, it is irreducible as
    well. Moreover, the above shows
    that $V$ is constructible.
\end{proof}

\begin{proof}[Proof of Theorems~\ref{ref:mainthm13degree}
    and~\ref{ref:mainthm14degree}]
    The locus of points of the Hilbert scheme corresponding to smooth
    (i.e.~reduced) algebras of length $d$ is irreducible,
    as an image of an open subset of the $d$--symmetric product of
    $\mathbb{P}^n$, and
    smooth.
    The locus of points corresponding to smoothable algebras is
    the closure of the aforementioned locus, so it is also irreducible. If
    $d\leq 13$ or $d\leq 14$ and $n\leq 5$, this locus is the whole Hilbert
    scheme by Theorem~\ref{ref:mainthmsmoothable} and the claim follows.

    Now consider
    the case $d = 14$ and $n\geq 6$. Let $\mathcal{V}$ be the set of points of
    the Hilbert scheme
    corresponding to local Gorenstein algebras with Hilbert function $(1, 6,
    6, 1)$. By Remark~\ref{ref:1661:rmk} these are the only non-smoothable algebras of length $14$, thus they
    deform only to local algebras with the same Hilbert function. Therefore,
    $\mathcal{V}$ is a sum of irreducible components of the Hilbert scheme. We
    will prove that $\mathcal{V}$ is an irreducible set, whose general point
    is smooth.

    Let $\mathcal{V}_p \subseteq \mathcal{V}$ denote the set
    consisting of schemes
    supported at a fixed point $p\in\mathbb{P}^n$. Then
    $\mathcal{V}$ is dominated by a set $\mathcal{V}_p \times
    \mathbb{P}^n$. Note that an irreducible scheme supported at a point $p$
    may be identified with a Gorenstein quotient of the power series ring having Hilbert
    function $(1, 6, 6, 1)$. These quotients are parameterised by the dual
    generators. More precisely, the set of $V$ of $f\in k[x_1, \ldots ,
    x_n]_{\leq 3}$ such that $H_{\Apolar{f}} = (1, 6, 6, 1)$ gives a morphism
    \[V\to \mathcal{V}_p \subseteq \HilbGor_{14} \mathbb{P}^n\]
    which sends $f$ to $\Spec \Apolar{f}$
    supported at $p$ (see subsection~\ref{sss:parameterising}). Since $V\to \mathcal{V}_p$ is surjective and $V$ is
    irreducible by Lemma~\ref{ref:1661irreducibility:lem}, we see that
    $\mathcal{V}_p$ is irreducible. Then $\mathcal{V}$ is irreducible as well.

    Take a smooth point of $\HilbGor_{14} \mathbb{P}^6$ which corresponds to
    an algebra $A$ with Hilbert function $(1, 6, 6, 1)$. Then any point of
    $\HilbGor_{14} \mathbb{P}^n$ corresponding to an embedding $\Spec A
    \subseteq \mathbb{P}^n$ is smooth by \cite[Lem 2.3]{cn09}. This concludes the
    proof.
\end{proof}

\section{Acknowledgements}

We wish to express our thanks to A.A.~Iarrobino and P.M.~Marques for
inspiring conversations. Moreover we are also sincerely grateful to
W.~Buczy\'nska and J.~Buczy\'nski for their care, support and hospitality during
the preparation of this paper. We also thank J.~Buczy\'nski for explaining the
    proof of Proposition~\ref{ref:irreducibility_of_dual_socle:prop}.
    We thank the referee for careful reading and suggesting a number of
    improvements.
    The examples were obtained with the help of
Magma computing software, see~\cite{Magma}.

\bigskip
\noindent
Gianfranco Casnati,\\
Dipartimento di Scienze Matematiche,  Politecnico di Torino, \\
corso Duca degli Abruzzi 24, 10129 Torino, Italy \\
e-mail: {\tt gianfranco.casnati@polito.it}

\bigskip
\noindent
Joachim Jelisiejew,\\
Faculty of Mathematics, Informatics, and Mechanics, University of Warsaw,\\
Banacha 2, 02-097 Warsaw, Poland\\
\texttt{j.jelisiejew@mimuw.edu.pl}

\bigskip
\noindent
Roberto Notari, \\
Dipartimento di Matematica, Politecnico di Milano,\\
via Bonardi 9, 20133 Milano, Italy\\
e-mail: {\tt roberto.notari@polimi.it}

\end{document}